\title{A non-trivial upper bound on the threshold bias of the Oriented-cycle game}
\author{
\quad{Dennis Clemens
\thanks{Institut f\"ur Mathematik, Technische Universit\"at Hamburg--Harburg, Germany.
Email: dennis.clemens@tuhh.de. Work done at FU Berlin while supported by DFG, project SZ 261/1-1.}}
\quad{Anita Liebenau
\thanks{School of Mathematical Sciences, Monash University, Clayton 3800, Australia. Email: anita.liebenau@monash.edu.
Work done while at FU Berlin, supported by the Berlin Mathematical School.}}}
\newtheorem{theorem}{Theorem}[section]
\newtheorem{lemma}[theorem]{Lemma}
\newtheorem{proposition}[theorem]{Proposition}
\newtheorem{observation}[theorem]{Observation}
\newtheorem{corollary}[theorem]{Corollary}
\newtheorem{conjecture}[theorem]{Conjecture}
\newtheorem{problem}[theorem]{Problem}
\newtheorem{definition}[theorem]{Definition}
\def\cA{{\cal A}}
\def\cC{{\cal C}}
\def\cF{{\cal F}}
\def\cG{{\cal G}}
\def\cH{{\cal H}}
\newcommand{\cL}{\ensuremath{\mathcal{L}}}
\def\cP{{\cal P}}
\renewcommand{\leq}{\leqslant}
\renewcommand{\geq}{\geqslant}
\newcommand{\se}{\subseteq}
\newcommand{\eps}{\ensuremath{\varepsilon}}
\newcommand{\rest}{\ensuremath{V\setminus (A\cup B)}}
\newcommand{\restStrich}{\ensuremath{V\setminus (A'\cup B')}}
\newcommand{\sm}{\ensuremath{\setminus}}
\newcommand{\size}[1]{\left| #1 \right|}
\newcommand\mytabs{\hspace*{10pt}\=\hspace{1.5cm}\=\hspace{4.8cm}}
\newenvironment{mytabbing}[1][\mytabs]
  {\begin{tabbing}#1\kill}
  {\end{tabbing}}
\newcommand*\widebar[1]{%
  \hbox{%
  \vbox{%
    \hrule height 0.5pt 
    \kern0.5ex
    \hbox{%
      \kern-0.1em
      \ensuremath{#1}%
      \kern-0.1em
      }%
    }%
}%
}
\newcommand{\tikzoverset}[2]{%
  \tikz[baseline=(X.base),inner sep=0pt,outer sep=0pt]{%
    \node[inner sep=0pt,outer sep=0pt] (X) {$#2$}; 
    \node[yshift=1pt] at (X.north) {$#1$};
}}
\newcommand\back[1]{
  \tikzoverset{\leftarrow}{#1}
}
\newcommand{\Dback}{\back{D}}
\newcommand{\eback}{\back{e}}
\definecolor{darkgreen}{RGB}{0,100,0}
\definecolor{nicegreen}{RGB}{0,204,0}
\definecolor{myred}{RGB}{220,24,10}
\definecolor{myorange}{RGB}{255,165,0}
\begin{document}
\maketitle

\begin{abstract}
In the Oriented-cycle game, introduced by Bollob\'as and Szab\'o \cite{bs1998}, 
two players, called OMaker and OBreaker, alternately direct edges of $K_n$. 
OMaker directs exactly one previously undirected edge, whereas OBreaker is allowed to direct 
between one and $b$ previously undirected edges. 
OMaker wins if the final tournament contains a 
directed cycle, otherwise OBreaker wins.
Bollob\'as and Szab\'o \cite{bs1998} conjectured that for a bias as large as $n-3$ OMaker 
has a winning strategy if OBreaker must take exactly $b$ edges in each round. 
It was shown recently by Ben-Eliezer, Krivelevich and Sudakov \cite{bks2012}, that OMaker 
has a winning strategy for this game whenever $b< n/2-1$. 
In this paper, we show that OBreaker has a winning strategy whenever 
$b> 5n/6+1$. Moreover, in case OBreaker is required to direct 
exactly $b$ edges in each move, we show that OBreaker wins for $b\geq 19n/20$, 
provided that $n$ is large enough. This refutes the conjecture by Bollob\'as and Szab\'o. 
\end{abstract}

{\bf Keywords:} 
Orientation games; Digraphs; Cycles

\section{Introduction} 
We consider biased orientation games, 
as discussed by Ben-Eliezer, Krivelevich and Sudakov in \cite{bks2012}. 
In orientation games, the board consists of the edges of the complete graph 
$K_n$. In the $(a:b)$ orientation game, the two players called OMaker and 
OBreaker, alternately direct previously undirected edges. 
OMaker starts, and in each round, OMaker directs between one and $a$ edges, 
and then OBreaker directs between one and $b$ edges. 
At the end of the game, the final graph is a tournament on $n$ vertices. 
OMaker wins the game if this tournament has some predefined property $\cP$. 
Otherwise, OBreaker wins. 

Orientation games can be seen as a relative of $(a:b)$ Maker--Breaker
games, played on the complete graph $K_n$. The game is played by two players,
Maker and Breaker, who alternately claim $a$ and $b$ edges, respectively. 
Maker wins if the subgraph consisting of her edges satisfies some given monotone-increasing property $\cP$.
Otherwise, Breaker wins. Maker--Breaker games have been widely studied  
(cf.~\cite{Beck82}, \cite{Beck85}, \cite{Beck94}, \cite{bp2005}, \cite{ec1978}, \cite{ES1973}, \cite{gs2009}, \cite{k2011}), 
and it is quite natural to translate typical questions about Maker--Breaker games
to orientation games. 

For instance, Beck \cite{BeckBook} studied the so-called {\em Clique game}, 
proving that in the $(1:1)$ Maker--Breaker game, the largest clique that
Maker is able to build is of size $(2-o(1))\log_2(n)$.
Motivated by this result, an orientation game version of the Clique game
is considered in \cite{cgl2013}: Given a tournament $T_k$
on $k$ vertices, it is proven there that for $k\leq (2-o(1))\log_2(n)$  
OMaker can ensure that $T_k$ appears in the final tournament, 
while for $k\geq (4+o(1))\log_2(n)$ OBreaker always can prevent
a copy of $T_k.$

In this work we only consider orientation games with $a=1$. 
We refer to 
the $(1:1)$ orientation game as the {\em unbiased} orientation game, 
and the $(1:b)$ orientation game as the {\em $b$-biased orientation game} when $b>1$. 
Increasing $b$ can only help OBreaker (since OBreaker can choose to direct fewer than $b$ edges per round)  
so the game is {\em bias monotone}. 
Therefore, 
any such game (besides degenerate games where $\cP$ is a property
that is satisfied by every or by no tournament on $n$ vertices) has a {\em threshold} $t(n,\cP)$ such that OMaker wins 
the $b$-biased game when $b\leq t(n,\cP)$ and OBreaker wins the game when 
$b> t(n,\cP)$.

In a variant, OBreaker is required to direct exactly $b$ edges in each round. 
We refer to this variant as the {\em strict} $b$-biased orientation game, 
where {\em the strict rules apply}. 
Accordingly, we say the {\em monotone rules apply} in the game we defined above --  when OBreaker is 
free to direct between one and $b$ edges. 
Playing the exact bias in every round may be disadvantageous for OBreaker, 
so the existence of a threshold as for the monotone rules is unclear in general. 
We therefore define 
$t^+(n,\cP)$ to be the largest value $b$ such that OMaker has a strategy to win 
the strict $b$-biased orientation game, 
and $t^-(n,\cP)$ to be the largest integer such that for {\em every} $b\leq t^-(n,\cP)$, 
OMaker has a strategy to win the strict $b$-biased orientation game.
(The definition of these two threshold functions
is motivated by the study of Avoider-Enforcer games, cf.~\cite{hks2007}, \cite{hkss2010}.)
Trivially, $t(n,\cP)\leq  t^-(n,\cP) \leq t^+(n,\cP)$ holds.

The threshold bias $t(n,\cP)$ was investigated by Ben-Eliezer, Krivelevich and Sudakov \cite{bks2012} for 
several orientation games. They showed, for example, that $t(n,\cH)= (1+o(1)) n/\ln n$, 
where $\cH$ is the property to contain a directed Hamilton cycle.
In general, very little is known about the relation between all three parameters in question. 
It is not even clear whether $t^-(n,\cH)$ and $t^+(n,\cH)$ need to be distinct values.

In this work, we study the {\em Oriented-cycle game}, introduced by Bollob\'as and Szab\'o \cite{bs1998},  
which is an orientation game where $\cP=\cC$ is the property of containing a directed cycle. That is, OMaker wins
if the final tournament contains a directed cycle, and OBreaker wins if the final tournament 
is transitive. 
The Maker-Breaker variant of this game is studied in \cite{bp2005}, where it 
is shown that Maker has a strategy to claim a cycle in the $(1:b)$ game if and only if $b<\left\lceil n/2 \right\rceil - 1$.

The strict version of the Oriented-cycle game was studied by Alon (unpublished, cf.~\cite{bs1998}), 
and later by Bollob\'as and Szab\'o \cite{bs1998}. 
For an upper bound on the threshold bias in the orientation game, 
it is observed in \cite{bs1998} that $t^+(n,\cC)\leq n-3$. 
Indeed, with a short case distinction, it can be verified that for $b\geq n-2$, 
OBreaker can always ensure that immediately after each round there exists
a subset $\{v_1,\ldots,v_k,v_{k+1}\}\subseteq V=V(K_n)$ such that for every $1\leq i\leq k$ and $v\in V\setminus \{v_1,\ldots,v_i\}$ the edge $v_iv$ is directed 
from $v_i$ to $v$; and every directed edge in $V\setminus \{v_1,\ldots,v_k\}$ starts in $v_{k+1}$.
If these properties hold, there is neither a directed cycle nor an edge that could close such a cycle, i.e.~OBreaker wins.
We refer to this strategy as the {\em trivial strategy}.

For a lower bound, it is proved in \cite{bs1998} that $t^+(n,\cC) \geq \lfloor (2-\sqrt{3}) n\rfloor $. 
Moreover, they remark that the proof also works for the monotone rules, 
implying that $t(n,\cC) \geq \lfloor (2-\sqrt{3}) n\rfloor$. 
Finally,  they conjectured that $t^+(n,C)=n-3$. 
In \cite{bks2012}, Ben-Eliezer, Krivelevich and Sudakov improve the lower bound 
and show that for $b\leq n/2-2$, OMaker has a strategy guaranteeing a cycle in the 
(monotone) $b$-biased orientation game, i.e. $t(n,\cC)\geq n/2-2$. 
In the main result of this paper we refute the conjecture of Bollob\'as and Szab\'o and give a 
strategy for OBreaker to prevent cycles when $b\geq 19n/20$ and $n$ large enough. 
 
\begin{theorem} \label{OrientedCycleStrict}
For large enough $n$, $t^+(n,C)\leq 19n/20-1$. 
\end{theorem}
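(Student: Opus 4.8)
We need to show that for $b \geq 19n/20$, OBreaker wins the strict $b$-biased Oriented-cycle game. Let me think about what invariant OBreaker should maintain.

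The trivial strategy maintains a "transitive skeleton" - vertices $v_1, \ldots, v_k, v_{k+1}$ where $v_i$ dominates everything after it, and everything in the rest starts at $v_{k+1}$. This needs $b \geq n-2$.

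To do better, OBreaker needs to maintain a more clever invariant. The natural idea: maintain a partition of $V$ into a "dominating chain" part $A = \{v_1, \ldots, v_k\}$ (where $v_i \to v$ for all $v$ after $v_i$), a "dominated chain" part $B$ (vertices that are beaten by everything, arranged as a transitive sink-chain), and a "rest" $R = V \setminus (A \cup B)$. On the rest, OBreaker ensures the directed edges form something "simple" - acyclic and mergeable into the chains.

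The key mechanism: when OMaker directs an edge, OBreaker responds by directing edges to either (a) extend one of the chains by absorbing a vertex from $R$, or (b) "orient the neighborhood" of some vertex in $R$ so that it becomes safe. The macro `\brest` ($b_{\mathrm{rest}}$) suggests OBreaker budgets its $b$ edges: some to maintain chain structure, $b_{\mathrm{rest}}$ to keep $R$ under control. Using $\rest$ notation, the size $|R|$ shrinks over the game, and OBreaker only needs $|R| + (\text{correction})$ edges per round when working inside the rest - and since $|R| \leq n$ only initially but the constraint is worst at the start, a factor of $19/20$ (rather than $1$) must come from a smarter local argument: perhaps OBreaker can afford to leave some edges of $R$ undirected by OMaker because those edges individually cannot be "completed" into a cycle given the chain structure and a parity/counting bound on how many dangerous edges OMaker can create.

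The main obstacle - and where the $19/20$ versus $1$ gap really lives - is handling the rest $R$: one must show that with only a constant fraction more than $|R|$ edges, OBreaker can always either safely absorb a vertex into $A$ or $B$, or neutralize OMaker's threat, without ever being forced into a position where a short directed cycle is unavoidable. This requires a careful case analysis on the local configuration around OMaker's last edge (which vertices are in $A$, $B$, or $R$, and how the edge is oriented relative to existing edges within $R$), together with a potential/weight function that is monotone across rounds and certifies that the invariant survives. I expect the bulk of the paper to be this case analysis, with the chain-maintenance part being comparatively routine bookkeeping. Finally, one checks that the invariant implies no directed cycle ever closes (as in the trivial strategy: edges within the eventual transitive order are all consistently oriented), so OBreaker wins.
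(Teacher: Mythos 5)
There is a genuine gap: your invariant is too strong and would not give a bias below $n$. You propose a ``dominating chain'' $A$ that beats \emph{everything} and a ``dominated chain'' $B$ beaten by \emph{everything}, which is essentially the trivial strategy split into two pieces; absorbing a vertex from $R$ into such an $A$ requires directing roughly $n-|A|$ edges in a single round, which already fails at the start of the game. The paper's key relaxation is that $(A,B)$ need only form a \emph{uniformly directed biclique} (all edges go from $A$ to $B$, but $A$ need not dominate $R$, nor must $R$ dominate $B$). This is enough because, combined with the constraint that every directed arc either starts in $A$ or ends in $B$, any potential directed cycle is trapped entirely inside $V\setminus A$ or entirely inside $V\setminus B$, and OBreaker then only has to keep each of those two (smaller) induced digraphs acyclic via the ``riskless''/``protected'' local structures. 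The coefficient $19/20$ arises from concrete size bounds on $|A|,|B|$ and on the rank of the local structure, not from a potential-function or parity argument.

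Second, and equally important, your proposal never addresses what is actually hard about the \emph{strict} game (as opposed to the monotone game, already handled by Theorem~\ref{OrientedCycleMain}): OBreaker is forced to direct \emph{exactly} $b$ edges every round, so on rounds where the ``useful'' response takes fewer than $b$ arcs, the surplus must be parked somewhere without ever becoming dangerous. The paper devotes a separate ``add-edges'' lemma (Lemma~\ref{lem:AddEdgesI}, Lemma~\ref{lem:AddEdgesII}) and the whole apparatus of dead/almost-dead vertices and buffer sets $A_0,B_0$ to this, plus a two-stage structure to transition between regimes. Without a mechanism of that kind, a strict-rules strategy cannot be verified, regardless of how the invariant on $R$ is chosen.

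Finally, the proposal is speculative in its core step: ``a careful case analysis together with a potential/weight function'' is named but not exhibited, and your own arithmetic (``$|R|$ plus a correction'') does not produce a factor strictly below $1$. As written, the proposal is a research plan that identifies the right \emph{class} of approaches (maintain a structured transitive-type invariant and budget OBreaker's edges) but misses both load-bearing ideas -- the UDB separation and the handling of the exact-bias constraint.
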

In the monotone game our strategy simplifies and gives a winning strategy already when $b=5n/6+2$.

\begin{theorem} \label{OrientedCycleMain}
$t(n,C)\leq 5n/6+1.$
\end{theorem}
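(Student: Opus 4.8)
The plan is to give an explicit strategy for OBreaker in the monotone $b$-biased game with $b=\nicefrac{5n}{6}+2$ that keeps the directed graph transitive forever, hence prevents a directed cycle. The guiding idea is to maintain, after each of OBreaker's moves, a partition of the vertex set $V$ into three parts: a ``settled'' part $S$, already linearly ordered and oriented consistently (a transitive tournament, with all edges from $S$ to $V\setminus S$ directed outward from the smaller-indexed vertex as in the trivial strategy), a ``dangerous'' set $D$ of bounded size that hosts all edges OMaker has managed to direct inside $V\setminus S$, and the large ``free'' remainder $R=V\setminus(S\cup D)$ which is still essentially empty of directed edges. OBreaker's job each round is: after OMaker directs one new edge, either absorb the affected vertices into $S$ (extending the linear order), or move at most a constant number of vertices into $D$, and in all cases re-orient enough edges — using the budget $b$ — so that $D$ stays small and $R$ stays clean. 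Since a directed cycle can never live in a transitive tournament, and OBreaker maintains transitivity of the whole orientation at the end of each round plus the property that no undirected edge could ever close a cycle, OBreaker wins.

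The key steps, in order, would be: (1) Formalize the invariant: after OBreaker's $t$-th move, $G$ is transitive, $S$ is an initial segment of the final order with all $S$-to-rest edges oriented outward, $|D|\leq c$ for an absolute constant $c$, every directed edge not inside $S$ has both endpoints in $D$ or goes from $D$ to $R$ ``outward'', and $R$ spans no directed edge. (2) Describe OBreaker's response to an arbitrary OMaker move $e=(u\to v)$: case on where $u,v$ lie relative to $S$, $D$, $R$. The typical/hardest case is $e$ entirely inside $R$; here OBreaker picks one of the two endpoints, say $v$, declares it to belong at the ``top'' of $S$ eventually, and directs all edges between $\{u,v\}$ (and any other current $D$-vertices she now wishes to settle) and the rest of $R$ so as to (a) make $\{u\}\cup(\text{some of }D)$ a transitive block that slots into the order and (b) reset $R$ to be edge-free again. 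One checks this costs at most $2|R|+O(1)\le b$ edges given $b\ge \nicefrac{5n}{6}+2$ and $|R|\ge n-|S|-c$, but crucially OBreaker only needs to pay the full price when $R$ is already small, i.e. when $|S|$ is large; a careful amortized/potential argument trades off $|S|$ against $|R|$ so that the per-move cost is always within budget. (3) Verify that $D$ never grows beyond $c$: each round at most one vertex is added to $D$ (the endpoint of $e$ not chosen for $S$), while settling moves remove vertices from $D$; a simple counting shows $|D|$ stays bounded because OBreaker settles a vertex into $S$ at least as often as OMaker can create a fresh ``problem'' vertex. (4) Conclude that the invariant is maintained for all $\binom n2$ rounds, so the final tournament is transitive.

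I expect the main obstacle to be Step (2) together with the budget bookkeeping: getting the constant in front of $n$ down to $\nicefrac{5}{6}$ (rather than the trivial $1$) requires being clever about which edges OBreaker re-directs — she should not blindly re-orient all of $R$ every round, but only ``repair'' the portion of $R$ touched by OMaker plus a controlled expansion, exploiting that most of $R$ is untouched and that OMaker directs only one edge per round. In particular one wants a two-region version of $R$ (an ``active frontier'' of size $\approx n/6$ that OBreaker keeps clean at cost $\approx n/3$, plus a ``deep reserve'' that OMaker cannot reach quickly), so that the $\nicefrac{5n}{6}$ bias suffices precisely to clean the frontier and advance the order by one block each round. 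Making this region-management consistent with the transitivity invariant — and checking the boundary cases when $S$ becomes comparable in size to $n$, where $R$ shrinks and the trivial endgame of the excerpt takes over — is the delicate part; everything else is routine casework on the location of OMaker's edge.
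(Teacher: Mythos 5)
Your proposal takes a fundamentally different (and, I believe, unworkable) route from the paper, and it has a genuine gap in the central invariant.

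The core issue is the claim that OBreaker can maintain $\size{D}\le c$ for an \emph{absolute constant} $c$ while keeping $R$ edge-free. Consider OMaker's path strategy $v_1\to v_2\to\cdots\to v_{r+1}$ played far from $S$: after $r$ rounds there are $r+1$ vertices incident to directed edges outside $S$. Your invariant forces at least $r+1-c$ of them into $S$, but settling a single vertex into a total order $S$ requires directing all $n-\size{S}-1$ of its edges to $V\setminus S$. When $\size{S}$ is small that is roughly $n$ edges per vertex, far above $b=\nicefrac{5n}{6}+2$. The amortization you appeal to does not exist in this game: $b$ is a hard per-round cap, not an average. So either $\size{D}$ grows without bound, or OBreaker exceeds the budget; the two-region refinement of $R$ does not change this arithmetic, because the cost of a settlement is governed by $\size{V\setminus S}$, which is close to $n$ precisely when you most need to settle.

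What the paper actually does, and what buys the constant $\nicefrac{5}{6}$, is two ideas you do not have. First, the global structure is not a single total order but a bipartite uniformly directed biclique $(A,B)$: to add a vertex $v$ to $A$, OBreaker directs only the $\size{B}$ edges $(v,y)$, $y\in B$, leaving the edges from $v$ to $V\setminus(A\cup B)$ untouched for later. This is the step that breaks the $n-O(1)$ barrier of the trivial strategy. Second, the ``dangerous'' set is replaced by an $\alpha$-structure of rank $r$ inside $V\setminus B$ (and its dual inside $V\setminus A$), a structure whose rank grows by one per round. It is exactly the right relaxation: OMaker can indeed create about $r$ live threats by round $r$ (your constant-$c$ invariant denies this), but Lemma \ref{alphaProc} shows OBreaker can absorb the new edge with at most $r$ further arcs while keeping the structure acyclic, and the three-stage bookkeeping $(S1.1)$--$(S1.4)$, $(S2.1)$--$(S2.3)$, $(S3.1)$--$(S3.3)$ balances $\size{A},\size{B}$ against the two ranks so that the per-round cost is $k+2\size{B}+\size{A}+2\le 5\lfloor n/6\rfloor+2$ in Stage~I and $\size{B}+k\le n-(\size{A}-k)\le b$ in Stage~II. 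Without the bipartite split and without letting the auxiliary structure grow linearly, the bookkeeping you sketch cannot close.
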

The remaining part of the paper is organized as follows. First we introduce some
general notation and terminology. In Section \ref{monotone}, we introduce
some necessary concepts and prove Theorem \ref{OrientedCycleMain}.
In Section \ref{strict}, we describe a strategy for OBreaker in the strict game, 
we prove that this strategy constitutes a winning strategy in Sections \ref{sec:StrictStageI} 
and \ref{sec:StrictStageII}. 
We finish the paper with some concluding remarks.

\subsection{General notation and terminology}
Let $V = [n]$ and let $D \subseteq V\times V$ be a digraph. 
We call elements $(v,w)\in D$ {\em arcs} and the underlying 
set $\{v,w\}$ {\em a pair} or {\em an edge}. 
An arc $(v,v)$ is called a {\em loop} and $(v,w)$ is called 
the {\em reverse arc} for $(w,v)$. 
In this work we are only concerned with {\em simple} digraphs, 
without loops and reverse arcs. 
For an arc $e\in D$, we write $e^+$ for its tail and $e^-$ for 
its head, i.e.~$e=(e^+,e^-)$. 
By $\eback$ we denote the reverse arc of $e$.
For a subdigraph $S\subseteq D$, we denote by $S^+$ the set 
of all tails $e^+$ for $e\in S$, and by $S^-$ the set of all heads 
$e^-$ for $e\in S$. 
It is convenient to denote by $\Dback$ the set of all 
reverse arcs of $D$, which we call the {\em dual of $D$}, 
that is $\Dback := \{\eback : e\in D\}$. 
Moreover, the set $\cA (D) := (V\times V)\setminus (D\cup \Dback \cup \cL)$ 
denotes the set of all {\em available arcs}, where 
$\cL =\{(v,v): v\in V\}$ is the set of all loops. 
Note that $\cA(D)$ is symmetric, i.e. if $(v,w)\in \cA(D)$ then also $(w,v)\in \cA(D)$. 
We generalize the notation of an arc and say the 
$k$-tuple $(v_1,\ldots,v_k)$ {\em induces a transitive tournament in $D$}, 
if for all $1\leq i<j \leq k$ we have that 
$(v_i,v_j)\in D$. 
We call a vertex $v\in V$ a {\em source of $D$} if for all $u\in V$, 
$(u,v)\not\in D$. Clearly, if $D$ is a transitive tournament, it has a unique vertex which is a source.  
For two disjoint sets $A,B\subseteq V$ we call the pair $(A,B)$ a {\em uniformly directed biclique}, for short 
UDB, if for all $v\in A$, $w\in B$ we have that $(v,w)\in D$.
We say the sequence $P=(e_1,\ldots,e_k)$ is a 
{\em directed path} (or simply a {\em path}) in $D$ 
if all $e_i\in D$ and for all $1\leq i <k$ we have that $e_i^- = e_{i+1}^+$. 
In this case we say that $P$ is a $e_1^+$-$e_k^-$-path.
We also write $P=v_1,\ldots,v_{\ell}$ to denote the
path $P=(e_1,\ldots,e_{\ell-1})$ where $e_i=(v_i,v_{i+1})$. 

In our proofs we are concerned how $D$ behaves on certain 
subsets of the vertices. For a subset $A\subseteq V$ we denote 
by $D(A)$ the directed subgraph of $D$ of arcs spanned by $A$. 
Formally, $D(A):=D\cap (A\times A)$. For two (not necessarily disjoint)
sets $A,B\subseteq V$, we set $D(A,B):=D\cap (A\times B)$
to be the set of those edges in $D$ that start in $A$ and end in $B$.
To shorten the notation, we also set $D(v,B):=D(\{v\},B)$ and
$D(A,v):=D(A,\{v\})$ for $v\in V$. Moreover, to describe the sizes of these edge sets,
we let $e_D(A):=\size{D(A)},\ e_D(A,B):=\size{D(A,B)},\ 
e_D(v,B):=\size{D(v,B)}$ and $e_D(A,v):=\size{D(A,v)}$.
Given the digraph $D$, we also might want to delete or
add some edge $e$. To simplify the notation, we write
$D+e:=D\cup \{e\}$ and
$D-e:=D\setminus \{e\}$.

Recall that the Oriented-cycle game is played on the edge set of 
$K_n$ where we assume that $V = V(K_n) =[n]$.
We say a player {\em directs the edge $(v,w)$} 
if s/he 
directs the pair $\{v,w\}$ from $v$ to $w$. 
That is, the player chooses the arc $(v,w)$ to belong to the final digraph, 
and dismisses the arc $(w,v)$ from the board.   
After some round $r$, we shall refer to $D\subseteq V\times V$ as the 
sub-digraph of already directed edges (arcs) by either player. 
We say a player {\em closes a cycle in $D$ by directing some 
edge $(v,w)$} if there exists a $w$-$v$-path in $D$. 
Note that if a player can close a cycle in $D$, then 
s/he 
can close a triangle (consider the shortest cycle a player can close, 
and consider any chord).

\section{The Oriented-cycle game -- monotone rules} \label{monotone}

There are two essential concepts to our proof, the aforementioned UDBs and 
{\em $\alpha$-structures} which we define below. 
A UDB is a complete bipartite digraph where all the edges are directed 
in the same direction (i.e.~from $A$ to $B$). 
OBreaker's goal is to create a UDB $(A,B)$ such that both parts fulfil 
$|A|,|B| \leq b$ and $A\cup B = V$. 
Suppose the following situation occurs at some point. 
There is a partition $A\cup B = V$ such that the pair 
$(A,B)$ forms a UDB in $D$, both parts fulfil 
$|A|,|B| \leq b$, and both sets $A$ and $B$ are empty 
(i.e.~$D(A)=D(B) =\emptyset$). OBreaker 
could then follow the trivial strategy inside $A$ and $B$, respectively 
(as OBreaker wins on $K_{b+2}$), 
even when the strict rules apply. 

However, 
if OBreaker's strategy is to build such a UDB,  
OMaker can direct edges inside 
these sets, and OBreaker needs to control those. 
Moreover, to optimise the bias, OBreaker should be able to 
control those edges inside $A$ and $B$ with as few edges as possible. 
To handle this obstacle, we introduce certain structures which we 
call {\em $\alpha$-structures} and a way  
to incorporate new (i.e.~OMaker's) edges into an existing 
$\alpha$-structure. 
Before we move on to study these special 
structures  let us mention that the idea of 
building a big UDB quickly comes up again in 
the proof of Theorem \ref{OrientedCycleStrict} in the subsequent sections. 
However, the requirement of directing exactly $b$ edges in each move 
puts some serious restrictions on the power of $\alpha$-structures. 
So for the strict rules, we then consider only special $\alpha$-structures, 
that are more robust to adding more edges.

\subsection{$\alpha$-structures}

The definition of an $\alpha$-structure looks quite technical 
at first sight. So let us motivate the idea behind it. 

Suppose OMaker's strategy is to build a long path first. 
(This indeed is the strategy for OMaker in the so far best-known 
lower-bound proof in \cite{bks2012}.)
Let $P = (e_1,\ldots,e_k)$ be a directed path 
of length $k$ in $D$ with arcs $e_i=(v_i,v_{i+1})$, 
and suppose OMaker enlengthens $P$ by directing 
an edge $(v_{k+1},w)$ for some $w\in V$. 
Then all the pairs $\{w,v_i\}$ for $1\leq i \leq k$ 
constitute potential threats as directing any $(w,v_i)$ 
would close a cycle 
(we call such pairs {\em immediate threats}). 
So OBreaker better direct all edges $(v_i,w)$ in his next 
move; we call this {\em closing} immediate threats. 
This way, OBreaker fills up the missing arcs of an evolving 
transitive tournament with {\em spine} $e_1,\ldots,e_k$.
Then formally, OBreaker sets $v_{k+2}:= w$, $e_{k+1}:= (v_{k+1},w)$ 
and directs all edges $(e_i^+,w)$ for $i\leq k$.
Clearly, as long as there are isolated vertices, OMaker could 
follow this strategy and increase the number of threats that 
OBreaker {\em has to close immediately} by one in each move. 

A na\"ive strategy for OBreaker would be to close immediate threats 
only. However, then OMaker could claim two vertex-disjoint  
paths of linear length, say $P_1=v_1,\ldots,v_{\eps n}$ and 
$P_2=w_1,\ldots,w_{\eps n}$, for some $\eps >0$,
and OBreaker would not claim edges between these two paths.
Directing the 
arc $(v_{\eps n},w_1)$ then, OMaker could suddenly create $(\eps n)^2>b$  immediate  
threats, which OBreaker cannot close in the next move. 

By defining the $\alpha$-structure we prevent such a situation. 
Moreover, 
we show that ``building a long path" is the 
best possible strategy for OMaker in the following sense: 
No matter how OMaker plays, 
OBreaker has a strategy such that in round $r$, 
he has to direct at most $r$ edges to close immediate threats. 

\begin{definition}\label{def:alpha}
Let $V$ be a set of vertices, let $r$ be a nonnegative integer, and let $D\subset (V\times V) \setminus \cL$ 
be a digraph without loops and reverse arcs. 
Then $D$ is called an {\em $\alpha$-structure of rank $r$} 
if there exist $k\leq r$ arcs $e_1,\ldots,e_k\in D$ such that 
for all $u,v\in V$:  
$$(u,v)\in D \text{ if and only if } (u,v) = (e_i^+,e_j^-) \text{ for some } 1\leq i\leq j \leq k.$$
The arcs $e_1,\ldots,e_k$ are called {\em decisive arcs of the 
$\alpha$-structure $D$.} 
\end{definition}

 \begin{figure}[bp]
 \centering
\phantom{asdfasdfasdfaasdfasfasdfad}
\begin{subfigure}{0.3\textwidth}
	\centering
	 \includegraphics
	 {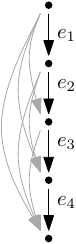}
\end{subfigure} 
\quad
 \begin{subfigure}{0.3\textwidth}
 	\centering
	\includegraphics
	{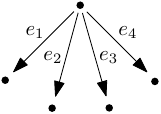}
 \end{subfigure}
\qquad
 \begin{subfigure}{0.3\textwidth}
 	\centering
	\includegraphics
	{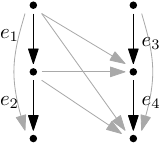}
 \end{subfigure}
 \caption{Three $\alpha$-structures of rank four that are not $\alpha$-structures of rank three.}
 \label{fig1}
 \end{figure}

In Figure \ref{fig1}, we give three simple examples of $\alpha$-structures. 
In our strategy that we describe later, the arcs $e_1,\ldots,e_k$ are edges directed by 
OMaker (though not necessarily in that order).
Note that the arcs $e_1,\ldots,e_k$ uniquely determine the $\alpha$-structure $D$. 
In particular, $D^+=\{e_1^+,\ldots,e_k^+\}$ and $D^-=\{e_1^-,\ldots,e_k^-\}$.

The condition $k\leq r$ in the definition above
might seem somewhat artificial at first sight. 
All the properties about $\alpha$-structures in this subsection 
are still true (or slight variations of them) if we require $k=r$ in the definition. 
However,  in the next subsection, this 
relaxed definition makes it easier to handle OBreaker's strategy.

Now, let us capture some immediate facts about $\alpha$-structures. 
The first proposition states that an $\alpha$-structure is {\em self-dual} in the following sense. 
\begin{proposition}\label{dualMon}
$D$ is an $\alpha$-structure of rank $r$ if and only if 
$\Dback$ is an $\alpha$-structure of rank $r$. 
\end{proposition}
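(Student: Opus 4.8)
The plan is to unwind the definition of an $\alpha$-structure directly and exhibit the decisive arcs for $\Dback$ from those for $D$ (and vice versa, or by symmetry since $\back{\back{D}} = D$). Suppose $D$ is an $\alpha$-structure of rank $r$ with decisive arcs $e_1, \ldots, e_k \in D$, $k \leq r$, so that the map $\alpha\big((i,j)\big) = (e_i^+, e_j^-)$ is a surjection onto $D$ for $1 \leq i \leq j \leq k$. The natural guess is that the reverse arcs $\eback_k, \eback_{k-1}, \ldots, \eback_1$ — listed in reverse order — are decisive arcs for $\Dback$. Writing $f_i := \eback_{k+1-i}$, one has $f_i^+ = e_{k+1-i}^-$ and $f_i^- = e_{k+1-i}^+$. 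The candidate map $\beta$ for $\Dback$ sends $(i,j)$ with $1 \leq i \leq j \leq k$ to $(f_i^+, f_j^-) = (e_{k+1-i}^-, e_{k+1-j}^+)$.

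The key step is to check $\beta$ is a surjection onto $\Dback$. An arbitrary arc of $\Dback$ has the form $\eback = (e^-, e^+)$ for some $e \in D$. Since $\alpha$ is onto $D$, there exist $1 \leq i_0 \leq j_0 \leq k$ with $e = \alpha\big((i_0,j_0)\big) = (e_{i_0}^+, e_{j_0}^-)$, hence $\eback = (e_{j_0}^-, e_{i_0}^+)$. Now set $i := k+1-j_0$ and $j := k+1-i_0$; because $i_0 \leq j_0$ we get $i \leq j$, and both lie in $\{1,\ldots,k\}$. Then $\beta\big((i,j)\big) = (e_{k+1-i}^-, e_{k+1-j}^+) = (e_{j_0}^-, e_{i_0}^+) = \eback$, as desired. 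I should also remark that $\Dback$ is again a digraph without loops and reverse arcs (immediate, since $D$ is), so the hypotheses of Definition \ref{def:alpha} are met, and that $k \leq r$ carries over verbatim. This gives one direction; the converse follows by applying the same argument to $\Dback$ and using $\back{\back{D}} = D$.

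I do not expect a real obstacle here — the statement is essentially a bookkeeping exercise in reversing indices. The only mild subtlety is getting the order reversal right so that the constraint $i \leq j$ is preserved (reversing the list of decisive arcs is exactly what makes this work; keeping the original order would not). It is worth stating explicitly that this is why the proposition holds "for free," and noting that the decisive arcs of $\Dback$ need not be unique any more than those of $D$ are. I would keep the write-up to a few lines: define $f_i := \eback_{k+1-i}$, verify the surjectivity computation above, and invoke the involution $\back{\back{D}} = D$ for the reverse implication.
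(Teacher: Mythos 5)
Your proof is correct and matches the paper's approach exactly: the paper's one-line proof also observes that if $e_1,\ldots,e_k$ are decisive arcs for $D$, then $\back{e_k},\ldots,\back{e_1}$ are decisive arcs for $\Dback$. Your write-up simply spells out the index bookkeeping that the paper leaves implicit.
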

\begin{proof}
If $e_1,\ldots,e_k$ are decisive arcs in  $D$, then $\back{e_k},\ldots,\back{e_1}$ are 
decisive arcs in $\Dback$.
\end{proof}

The next lemma says that restricting to induced subgraphs does not destroy $\alpha$-structures. 
\begin{lemma}\label{obs:trivialFacts}
Let $D$ be an $\alpha$-structure of rank $r$ on vertex set $V$. 
Then for any subset $V'\subseteq V$, we have that $D(V')$ is an 
	$\alpha$-structure of rank 
$r$.

\end{lemma}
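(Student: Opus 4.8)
The plan is to start from decisive arcs $e_1,\dots,e_k\in D$ (with $k\le r$) witnessing that $D$ is an $\alpha$-structure of rank $r$, and to \emph{assemble a fresh list of decisive arcs for $D(V')$}. Writing $p_i:=e_i^+$ and $q_j:=e_j^-$, the definition says precisely that $D=\{(p_i,q_j):1\le i\le j\le k\}$, and hence $D(V')=\{(p_i,q_j):1\le i\le j\le k,\ p_i\in V',\ q_j\in V'\}$. The first thing to observe — and the reason the statement needs a little care — is that the original $e_i$ need not survive in $D(V')$ at all: for $e_1=(1,2)$, $e_2=(3,4)$, $V'=\{1,4\}$ one has $D(V')=\{(1,4)\}$, and $(1,4)$ is not among the $e_i$. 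So one cannot simply keep the surviving $e_i$; new decisive arcs must be built out of surviving tails and surviving heads.

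Concretely, I would set $I:=\{i:p_i\in V'\}$ and $J:=\{j:q_j\in V'\}$, dispose immediately of the degenerate case $I=\emptyset$ or $J=\emptyset$ (then $D(V')=\emptyset$, which is an $\alpha$-structure of rank $r$ with no decisive arcs), and otherwise pass to the \emph{useful} indices $I':=\{i\in I:i\le\max J\}$ and $J':=\{j\in J:j\ge\min I\}$. A one-line check then gives $D(V')=\{(p_i,q_j):i\in I',\ j\in J',\ i\le j\}$ together with the boundary inequalities $\min R':=\min(I'\cup J')=\min I'\le\min J'$ and $\max R'=\max J'\ge\max I'$.

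The heart of the argument is the following construction. Let $R':=I'\cup J'=\{r_1<\dots<r_m\}$; since $R'\subseteq\{1,\dots,k\}$ we have $m\le k\le r$. For each $a\in\{1,\dots,m\}$ put $i^*(a):=\max\{i\in I':i\le r_a\}$ and $j^*(a):=\min\{j\in J':j\ge r_a\}$, which are well-defined thanks to the boundary inequalities, and set $f_a:=(p_{i^*(a)},q_{j^*(a)})$. From $i^*(a)\le r_a\le j^*(a)$ we get $(p_{i^*(a)},q_{j^*(a)})\in D$, and since both coordinates lie in $V'$ in fact $f_a\in D(V')$. It then remains to check the two requirements of Definition~\ref{def:alpha}: \emph{(i)} for $a\le b$ one has $i^*(a)\le r_a\le r_b\le j^*(b)$, so $(f_a^+,f_b^-)=(p_{i^*(a)},q_{j^*(b)})\in D(V')$, i.e.\ the map lands in $D(V')$; \emph{(ii)} given any arc $(p_i,q_j)\in D(V')$ with $i\in I'$, $j\in J'$, $i\le j$, picking $a,b$ with $r_a=i$ and $r_b=j$ yields $i^*(a)=i$ and $j^*(b)=j$, hence $(f_a^+,f_b^-)=(p_i,q_j)$ with $a\le b$, which is surjectivity. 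Thus $f_1,\dots,f_m$ are decisive arcs showing $D(V')$ has rank $\le m\le r$.

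The only genuine obstacle is the one flagged above: recognising that the original decisive arcs are of no use here and that one must instead "round" each position $r_a$ up to the nearest surviving head and down to the nearest surviving tail. Once $I',J',i^*,j^*$ are defined this way, every verification is a single monotonicity inequality; the remaining fuss is purely bookkeeping the degenerate emptiness cases so that $\min I$ and $\max J$ are meaningful.
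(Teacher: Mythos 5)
Your proof is correct, and it takes a genuinely different route from the paper's. The paper reduces immediately to removing a single vertex ($V'=V\setminus\{v\}$) and then runs a four-case repair loop over the original decisive arcs $e_1,\ldots,e_k$: at step $i$, if $v$ is the tail (resp.\ head) of $e_i$, it tries to swap $e_i$ for a surviving arc $g$ with the same head (resp.\ tail) that is ``forced'' by $e_i$, and otherwise drops $e_i$; the verification then inspects each branch of the loop. You instead handle arbitrary $V'$ in one stroke, starting from the observation that an $\alpha$-structure is exactly $D=\{(e_i^+,e_j^-):1\leq i\leq j\leq k\}$, so that $D(V')$ is the sub-array of that grid indexed by the surviving tail positions $I$ and surviving head positions $J$. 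That lets you replace the algorithmic repair with a closed formula: form $R'=I'\cup J'$ and round each $r_a\in R'$ down to the nearest surviving tail index $i^*(a)$ and up to the nearest surviving head index $j^*(a)$; the arcs $f_a=(p_{i^*(a)},q_{j^*(a)})$ are then decisive for $D(V')$, by two one-line monotonicity checks. This is tidier than the paper's argument: there is no induction on $\size{V\setminus V'}$, no case split on whether $v$ is a tail, a head, or neither, and the new decisive arcs appear by formula rather than as the terminal state of a procedure. One small omission: after discarding the cases $I=\emptyset$ and $J=\emptyset$ you should also note that $I'$ and $J'$ may still be empty (precisely when $\min I>\max J$), in which case $D(V')=\emptyset$ and there is nothing to prove; your boundary inequalities and the definition of $i^*,j^*$ implicitly assume $I',J'\neq\emptyset$.
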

\begin{proof}
Let $v\in V.$ It is enough to prove the statement for $V'=V\setminus \{v\}.$
Let $S_0=\{e_1,\ldots, e_k\}$ be a set of decisive arcs for $D$, with $k\leq r$,
as given by Definition \ref{def:alpha}. In the following we construct a
set $S_k$ iteratively which then turns out to be a set of decisive arcs of $D':=D(V').$ 
For every $i=1,\ldots, k$ do the following: 
\begin{itemize}
\item[1.] If $v\notin \{e_i^+,e_i^-\}$, then set $f_i:=e_i$ and $S_i:=S_{i-1}.$
\item[2.] If $v=e_i^-$ and there is an arc $g\in D'\setminus S_{i-1}$ with $g^+=e_i^+$
 such that $g$ {\em is forced by $e_i$} (there exists $j>i$ such that $g^-=e_j^-$),
then set $f_i:=g$ and $S_i:=S_{i-1}-e_i+f_i.$
\item[3.] If $v=e_i^+$ and there is an arc $g\in D'\setminus S_{i-1}$ with $g^-=e_i^-$
such that $g$ {\em is forced by $e_i$} (there exists $s<i$ such that $g^+=e_s^+$), 
then set $f_i:=g$ and $S_i:=S_{i-1}-e_i+f_i.$
\item[4.] In any other case set $S_i:=S_{i-1}-e_i.$
\end{itemize}

Let $I\subseteq [k]$ be the index set of those arcs $e_i$ removed in Case 4.  
We claim that $D'$ is an $\alpha$-structure with decisive arcs $S_k=\{f_i : i\not\in I\}$.  
 Let $u,w \in V'$. We need to show that 
 $(u,w)\in D'$ if and only if $(u,w)=(f_i^+,f_j^-)$ for some $1\leq i\leq j \leq k$ with 
 $i,j\not\in I$.  

First, assume that $(u,w)\in D'$. 
If $(u,w)=f_i=(f_i^+,f_i^-)$ for some $i\notin I$ then we are done. 
So, assume that $(u,w)\notin S_k$. Note that this implies that $(u,w)\notin S_i$ for all $i\leq k.$
Since $D$ is an $\alpha$-structure, $(u,w)=(e_i^+,e_j^-)$ for some $1\leq i\leq j\leq k.$ 
Consider the algorithm at iteration $i$ when $e_i$ is handled. 
If $e_i^-\neq v$, then Case 1 applies for $e_i$. That is, $e_i= f_i\in S_k$ and thus $u=f_i^+$ for $i\notin I$. 
If $e_i^-= v$, then Case 2 applies for $e_i$ since $(u,w)\notin S_{i-1}$ is such an arc {\em forced by $e_i$} 
(though the algorithm may choose $g$ different from $(u,w)$). 
Then $u=e_i^+=g^+=f_i^+$ and $i\notin I$. 
Consider now the algorithm at iteration $j$ when $e_j$ is handled. 
Similarly, either Case 1 or Case 3 applies for $e_j$, depending whether $v\neq e_j^+$ or $v=e_j^+$.  
Thus, $e_j^-=f_j^-$ and $j\notin I$. 
Therefore, $(u,w)=(f_i^+,f_j^-)$ for some $1\leq i\leq j \leq k$ with 
 $i,j\not\in I$.  

Let now $(u,w)=(f_i^+,f_j^-)$ for some $1\leq i\leq j \leq k$ with 
 $i,j\not\in I$, and assume that $(u,w)\not\in D'$. 
Since $f_\ell \in D'$ for all $1\leq \ell\leq k$, $\ell\not\in I$, both $u$ and $w$ must be distinct from $v$. 
That is we can in fact assume that  $(u,w)\not\in D$. 
However, we claim that $f_i^+= e_s^+$ and $f_j^-=e_t^-$ for some $s\leq t$ which implies that 
$(u,w)=(e_s^+,e_t^-)\in D$, since $D$ is an $\alpha$-structure, a contradiction.  
Indeed, consider the algorithm during iteration $i$ when $f_i$ is determined. 
Note that Cases 1, 2 or 3 must apply since we assume that $i\not\in I$. 
In Case 1, $f_i^+ = e_i^+$. In Cases 2 and 3, $e_i$ contains $v$ and thus is replaced by $f_i\neq e_i$. 
In Case 2, $f_i^+ = g^+=e_i^+$ by definition. In Case 3, $f_i=g$ is forced by $e_i$, that is, $f_i^+ = e_s^+$ 
for some $s<i$. In all three cases, $f_i^+ = e_s^+$ for some $s\leq i$. 
Finally, consider the algorithm during iteration $j$ when $f_j$ is determined. 
Again, Case 4 cannot apply for $e_j$. If Cases 1 or 3 apply, then $f_j^-=e_j^-$. 
If Case 2 applies, then $f_j^-=e_t^-$ for some $t>j$. 
Therefore, $(u,w)=(f_i^+,f_j^-)=(e_s^+,e_t^-)$ where $s\leq i\leq j\leq t$ and the proof is finished. 
\end{proof}

Before we prove that $\alpha$-structures are indeed acyclic, let us look 
at paths in them. The next lemma roughly says that any path in an $\alpha$-structure 
can be controlled through its decisive arcs.  
An illustration can be found in Figure \ref{fig:PathProp}.

\begin{figure} [bp]
\centering
\includegraphics[width=0.7\textwidth]{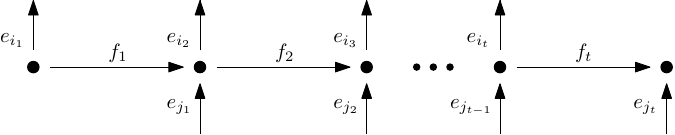}
 \caption{Illustrating Proposition \ref{crucialProp}.}
 \label{fig:PathProp}
 \end{figure}

\begin{proposition}\label{crucialProp}
Let $D$ be an $\alpha$-structure of rank $r$ with decisive arcs 
$e_1,\ldots,e_k$ and $k\leq r$. Let $P=(f_1,\ldots,f_t)$ be a path in $D$. Then 
there are decisive arcs $e_{i_1}, e_{j_1}, e_{i_2}, e_{j_2},\ldots , e_{i_t}, e_{j_t}$ 
(not necessarily distinct) 
such that $f_s = (e_{i_s}^+, e_{j_s}^-)$  for all $1\leq s\leq t$ and such that 
$i_1\leq j_1\leq i_2\leq j_2\leq \cdots\leq i_t\leq j_t$.
\end{proposition}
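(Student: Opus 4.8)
The plan is to induct on the length $t$ of the path $P=(f_1,\ldots,f_t)$. For $t=1$, the single arc $f_1\in D$ is, by Definition \ref{def:alpha}, of the form $f_1=(e_{i}^+,e_{j}^-)$ for some $i\le j$, and we set $(i_1,j_1):=(i,j)$. For the inductive step, suppose the statement holds for all paths of length $t-1$; apply it to the sub-path $(f_1,\ldots,f_{t-1})$ to obtain decisive indices $i_1\le j_1\le\cdots\le i_{t-1}\le j_{t-1}$ with $f_s=(e_{i_s}^+,e_{j_s}^-)$. It remains to choose $i_t,j_t$ with $j_{t-1}\le i_t\le j_t$ and $f_t=(e_{i_t}^+,e_{j_t}^-)$. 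Since $f_t\in D$, we already know $f_t=(e_{i}^+,e_{j}^-)$ for \emph{some} $i\le j$; the real content is showing we can take $i\ge j_{t-1}$.

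The key observation is that $P$ is a directed path, so $f_{t-1}^-=f_t^+$; that is, $e_{j_{t-1}}^-=e_{i}^+$ where $i$ is whatever index the arc property gave us for $f_t$. Now I would argue that one can always ``slide'' the tail index of $f_t$ up to $j_{t-1}$. Concretely: I claim that whenever $(e_a^+,e_c^-)\in D$ and $(e_c^-) $ happens to equal some $e_b^+$ with $b\ge a$, then in fact $(e_b^+,e_c^-)=(e_c^-,\ldots)$ — wait, more carefully — I want to use that $e_b$ is itself a decisive arc, so $e_b=(e_b^+,e_b^-)$, and the pair $(e_b^+,e_b^-)=\alpha\big((b,b)\big)$. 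The point: if $e_{j_{t-1}}^-=e_i^+$, then $e_i^+=e_{j_{t-1}}^-$ is the head of the decisive arc $e_{j_{t-1}}$, and I can consider the arc $\alpha\big((j_{t-1},j)\big)=(e_{j_{t-1}}^+,e_j^-)$ — no, I want $(e_{j_{t-1}}^-$ as a tail, which is not directly in the image of $\alpha$. Instead, the clean route is: since $f_t=(e_i^+,e_j^-)\in D$ and $e_i^+=e_{j_{t-1}}^-$, the arc $f_t$ has tail equal to the head of $e_{j_{t-1}}$; but we need a decisive arc whose tail is $e_{j_{t-1}}^-$. Here I would invoke that the map $\alpha$ is a surjection onto $D$, and track indices: there must be \emph{some} pair $(i',j')$ with $i'\le j'$ and $\alpha\big((i',j')\big)=f_t$, i.e. $e_{i'}^+=f_t^+=e_{j_{t-1}}^-$ and $e_{j'}^-=f_t^-$. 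The crucial inequality $i'\ge j_{t-1}$ should follow from a monotonicity/consistency property of the decisive arcs — essentially, two decisive arcs sharing a vertex must be ``ordered'' compatibly, which is exactly the kind of structural fact encoded by (and provable from) the surjectivity of $\alpha$ together with the no-reverse-arc condition.

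I expect the main obstacle to be precisely establishing that compatible-ordering lemma: given that $e_{j_{t-1}}^-=e_{i'}^+$ for some valid pair producing $f_t$, show $i'\ge j_{t-1}$, equivalently that a decisive arc whose tail is $e_{j_{t-1}}^-$ cannot have a small index. The natural argument: if $i'<j_{t-1}$, then $\alpha\big((i',j_{t-1})\big)=(e_{i'}^+,e_{j_{t-1}}^-)=(e_{j_{t-1}}^-,e_{j_{t-1}}^-)$ would be a loop in $D$ — contradicting that $D$ has no loops (Definition \ref{def:alpha} requires $D\subset(V\times V)\setminus\cL$). That resolves it cleanly: $i'\ge j_{t-1}$, so setting $(i_t,j_t):=(i',j')$ with $j_{t-1}\le i_t\le j_t$ completes the induction. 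The only remaining care is the degenerate possibility $i'=j_{t-1}$, which is allowed by the non-strict inequality in the statement, so no contradiction arises there. I would also double-check the base case and the indexing bookkeeping (that $k\le r$ plays no role here beyond guaranteeing decisive arcs exist), and then the proof is complete.
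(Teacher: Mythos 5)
Your argument is correct and uses the same key idea as the paper: if the chain condition $j_s\leq i_{s+1}$ failed for adjacent arcs of the path, then $\alpha\bigl((i_{s+1},j_s)\bigr)=(e_{i_{s+1}}^+,e_{j_s}^-)=(f_{s+1}^+,f_s^-)$ would be a loop in $D$, contradicting the definition of an $\alpha$-structure. Packaging this as an induction on $t$ rather than checking all adjacent pairs at once, as the paper does, is a purely cosmetic difference.
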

\begin{proof}
For every $1\leq s\leq t$, the arc $f_s$ is an element of $D$, 
hence the existence of decisive arcs $e_{i_s}, e_{j_s}$ with $i_s\leq j_s$ and  
$f_s = (e_{i_s}^+, e_{j_s}^-)$ follows. 
Now, suppose there was some $s\in [t-1]$ such that 
$j_s>i_{s+1}$. Then the arc $(f_{s+1}^+,f_s^-)=(e_{i_{s+1}}^+,e_{j_s}^-)$ 
would be a loop in $D$, a contradiction.   
\end{proof}

The following property is crucial for our orientation game. 
Recall that $\cA (D) = (V\times V)\setminus (D\cup \Dback \cup \cL)$ denotes the 
set of available arcs.
\begin{proposition}\label{prop:noCycles}
If $D$ is an $\alpha$-structure, then for every available $e\in \cA (D)$ 
we have that $D+e$ is acyclic. 
\end{proposition}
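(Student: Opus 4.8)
The plan is to show that if $D+e$ contained a directed cycle, then that cycle must use the arc $e$ (since $D$ itself is acyclic by an argument I give below), so there would be a directed path $P$ in $D$ from $e^-$ to $e^+$. I would apply Proposition \ref{crucialProp} to $P=(f_1,\ldots,f_t)$ to obtain decisive arcs with $i_1\leq j_1\leq i_2\leq\cdots\leq i_t\leq j_t$ and $f_s=(e_{i_s}^+,e_{j_s}^-)$. Since $P$ starts at $e^-$ and ends at $e^+$, we have $e^-=f_1^+=e_{i_1}^+$ and $e^+=f_t^-=e_{j_t}^-$. But then $e=(e^+,e^-)=(e_{j_t}^+,\ldots)$ — wait, more carefully: $e=(e^+,e^-)=\alpha\big((\,\cdot\,)\big)$? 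The point is that $(e_{i_1}^+,e_{j_t}^-)=\alpha\big((i_1,j_t)\big)\in D$ since $i_1\leq j_t$. Now $e_{i_1}^+=e^-$ and $e_{j_t}^-=e^+$, so $(e^-,e^+)\in D$. Hence $e=(e^+,e^-)=\overline{(e^-,e^+)}\in\Dback$, contradicting $e\in\cA(D)$.

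The remaining gap is that I invoked acyclicity of $D$ itself, which is exactly the statement one gets by taking $e$ to be an arc one could add — but here I should argue directly. So first I would establish: \emph{an $\alpha$-structure $D$ is acyclic}. Suppose $C=(f_1,\ldots,f_t)$ is a directed cycle in $D$, so $f_t^-=f_1^+$. Applying Proposition \ref{crucialProp} gives decisive arcs with $i_1\leq j_1\leq\cdots\leq i_t\leq j_t$ and $f_s=(e_{i_s}^+,e_{j_s}^-)$. Then $f_1^+=e_{i_1}^+$ and $f_t^-=e_{j_t}^-$, and since $C$ is a cycle, $e_{i_1}^+=e_{j_t}^-$. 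But $i_1\leq j_t$, so $\alpha\big((i_1,j_t)\big)=(e_{i_1}^+,e_{j_t}^-)$ would be a loop in $D$, contradicting that $D$ has no loops. Hence $D$ is acyclic. (This is essentially the same computation as in the proof of Proposition \ref{crucialProp}, and I would phrase the two together.)

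Putting it together: given $e\in\cA(D)$, any cycle in $D+e$ either lies in $D$ (impossible, just shown) or uses $e$; in the latter case deleting $e$ leaves an $e^-$-$e^+$-path $P$ in $D$, and the argument of the first paragraph forces $(e^-,e^+)\in D$, so $e\in\Dback$, contradicting $e\in\cA(D)$. Therefore $D+e$ is acyclic.

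I do not expect a serious obstacle here — the only subtlety is making sure the "chained" decisive arcs from Proposition \ref{crucialProp} are applied to the right path (the path $P$ obtained by removing $e$ from the hypothetical cycle) and reading off the endpoints correctly; the loop-contradiction then does all the work. One should also note the degenerate case $t=1$ (the path $P$ is a single arc $f_1$, so $e^-=e_{i_1}^+$, $e^+=e_{j_1}^-$ with $i_1\leq j_1$), which the same formula $\alpha\big((i_1,j_1)\big)\in D$ handles without change.
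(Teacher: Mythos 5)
Your proof is correct and takes essentially the same route as the paper: apply Proposition \ref{crucialProp} to the path, use $i_1\leq j_t$ to get $(f_1^+,f_t^-)=\alpha\big((i_1,j_t)\big)\in D$, and derive the contradiction. The only (cosmetic) difference is that the paper handles both cases at once by allowing $e\in D\cup\cA(D)$ — so a cycle entirely in $D$ yields an arc together with its reverse in $D$ — whereas you first establish acyclicity of $D$ separately via the loop contradiction and then treat $e\in\cA(D)$.
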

\begin{proof}
Suppose there was a path 
$P=(f_1,\ldots,f_t)$ in $D$ and an edge $e\in D\cup \cA(D)$ such that $P+e$ forms a directed cycle. 
Let $e_{i_1}, e_{j_1}, e_{i_2}, e_{j_2},\ldots , e_{i_t}, e_{j_t}$ be given by Proposition \ref{crucialProp}. 
Then since $i_1 \leq j_t$ we have that $(f_1^+,f_t^-)=(e_{i_1}^+,e_{j_t}^-) \in D$. 
So, $e=(f_t^-, f_1^+) \in \Dback$ and hence not available, a contradiction. 
\end{proof}
\noindent
In the light of our orientation game, we pin down the following 
important implication. 
\begin{corollary}\label{cor:noCycles}
For some subset $V' \subseteq V$, suppose that in the Oriented-cycle game, 
OBreaker maintains that $D(V')$ is an $\alpha$-structure (of some rank $r$). Then 
there is no cycle in $D(V')$ and 
OMaker cannot close a cycle inside $V'$ in her next move.
\end{corollary}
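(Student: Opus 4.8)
The plan is to derive Corollary~\ref{cor:noCycles} directly from Propositions~\ref{prop:noCycles} and the definitions, using Lemma~\ref{obs:trivialFacts} to pass to the induced subgraph. The two assertions -- that $D(V')$ has no cycle, and that OMaker cannot close a cycle inside $V'$ -- are really two facets of the same statement, so I would handle them together.

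First I would spell out the hypothesis: OBreaker maintains that $D(V')$ is an $\alpha$-structure of some rank $r$. The statement ``there is no cycle in $D(V')$'' follows immediately from Proposition~\ref{prop:noCycles}: pick any arc $e\in D(V')$ (if $D(V')$ is empty there is trivially no cycle), and observe that $D(V')-e$ is still an $\alpha$-structure of rank $r$ by Lemma~\ref{obs:trivialFacts} applied with the same vertex set -- actually, more carefully, one should note that $\alpha$-structures themselves are acyclic, which is the content of the $e\in D$ part of Proposition~\ref{prop:noCycles}'s proof. Indeed, the proof of Proposition~\ref{prop:noCycles} already allows $e\in D$, so taking $e$ to be the closing arc of a hypothetical cycle gives the contradiction and shows $D(V')$ is acyclic.

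Next I would address the second half. Suppose, for contradiction, that OMaker can close a cycle inside $V'$ in \H{o}'s next move, i.e. there is an available arc $e\in\cA(D)$ with $e^+,e^-\in V'$ and a directed path in $D$ from $e^-$ to $e^+$ that closes a cycle. As remarked just before Section~\ref{monotone} (the ``shortest cycle / any chord'' observation), one may assume this cycle is a triangle, hence lies inside $V'$ with all its arcs in $D(V')$, and the closing arc $e$ lies in $\cA(D(V'))$. Now apply Proposition~\ref{prop:noCycles} to the $\alpha$-structure $D(V')$ and the available arc $e$: it gives that $D(V')+e$ is acyclic, contradicting the existence of the cycle. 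Therefore no such move exists.

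I do not expect a genuine obstacle here; the only point requiring a little care is the compatibility of vertex sets -- Proposition~\ref{prop:noCycles} and $\cA(\cdot)$ must be read relative to the vertex set $V'$ of the digraph $D(V')$, so that an arc ``available inside $V'$'' means an element of $\cA(D(V'))=(V'\times V')\setminus(D(V')\cup\back{D(V')}\cup\cL)$; this is exactly the set of edges OMaker could orient to have both endpoints in $V'$. Once that identification is made, both claims are one-line consequences of Proposition~\ref{prop:noCycles}. A secondary minor point is justifying the reduction to a triangle, but that is precisely the parenthetical remark at the end of the ``General notation'' subsection, so I would simply cite it.
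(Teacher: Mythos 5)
Your derivation is correct and matches the intended (unwritten) proof in the paper: the corollary is meant to follow directly from Proposition~\ref{prop:noCycles} applied to the $\alpha$-structure $D(V')$, and you correctly observe the only subtle point, namely that the \emph{statement} of Proposition~\ref{prop:noCycles} only covers available arcs $e\in\cA(D)$, but its \emph{proof} handles $e\in D\cup\cA(D)$ and thus yields acyclicity of the $\alpha$-structure itself. One small piece of noise: your brief detour through Lemma~\ref{obs:trivialFacts} (to argue about $D(V')-e$) is a false start --- that lemma concerns induced vertex-subgraphs, not arc deletion --- but you abandon it and give the right argument, and the triangle reduction you invoke is likewise unnecessary (once the cycle lies in $V'$, the closing arc already lies in $\cA(D(V'))$ and Proposition~\ref{prop:noCycles} applies directly); neither affects correctness.
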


In order for OBreaker to maintain an $\alpha$-structure on some subset 
of the vertices we need to know how to incorporate OMaker's edge 
into such a structure. The following is one of the key lemmas 
that we use for OBreaker's strategy. 

\begin{lemma}\label{alphaProc}
Let $D$ be an $\alpha$-structure of rank $r$ on vertex set $V$, 
and let $e\in \cA(D)$ be an available arc.
Then there exists  
a set $\{f_1,\ldots,f_t\}\subseteq \cA(D)$ of 
at most $\min \{r, \size{V}-2\}$ available arcs 
 such that 
$D'= D\cup\{e, f_1,\ldots,f_t\}$ is an $\alpha$-structure of rank $r+1$.
Moreover, $D'^+=D^+\cup\{e^+\}$ and $D'^-=D^-\cup\{e^-\}$. 
\end{lemma}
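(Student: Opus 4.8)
The plan is to add the new arc $e=(e^+,e^-)$ as a new decisive arc, placed \emph{last} in the ordering, so we set $e_{k+1}:=e$ and $k':=k+1\leq r+1$. With this choice, the map $\alpha'$ on $\{(i,j):1\leq i\leq j\leq k+1\}$ defined by $\alpha'((i,j))=(e_i^+,e_j^-)$ has image
\[
\alpha'\big(\{(i,j):1\leq i\leq j\leq k+1\}\big)=D\cup\{(e_i^+,e^-):i\leq k\}\cup\{(e^+,e_j^-):j\leq k\}\cup\{e\},
\]
since $\alpha((i,j))$ for $i,j\leq k$ already ranges over all of $D$. So the \emph{only} arcs we need to adjoin, beyond $e$ itself, are the ``forced'' arcs
\[
F:=\big(\{(e_i^+,e^-):1\leq i\leq k\}\cup\{(e^+,e_j^-):1\leq j\leq k\}\big)\setminus\big(D\cup\Dback\cup\cL\big),
\]
i.e. the forced arcs that are not already present and not loops and not reverse arcs of present arcs. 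First I would check that every arc of $F$ is in fact available: an arc $(e_i^+,e^-)$ is excluded from $F$ precisely if it lies in $D$, in $\Dback$, or in $\cL$, so what remains is by definition in $\cA(D)$; but we must also ensure adding all of $F$ at once keeps the digraph simple, which holds because $e$ and its reverse are not in $F$ (the pair $\{e^+,e^-\}$ is already decided) and no two arcs of $F$ form a reverse pair — if $(e_i^+,e^-)$ and $(e^-,e_j^+)$ were both in $F$ then $(e^-,e_j^+)\in F$ would force $e^-\in D^+$, impossible since $e^-$ becomes a new head not previously a tail, more carefully one argues via $e\in\cA(D)$ that $\{e^+,e^-\}\cap(D^+\cup D^-)$ behaves compatibly. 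I would verify that $D':=D\cup\{e\}\cup F$ equals the image of $\alpha'$, which is immediate from the displayed computation, and that the decisive arcs $e_1,\dots,e_k,e_{k+1}=e$ witness $D'$ as an $\alpha$-structure of rank $r+1$ (note $k+1\leq r+1$). The ``moreover'' part is then automatic: adding $e$ as a decisive arc contributes $e^+$ to the tails and $e^-$ to the heads, while the forced arcs of $F$ only reuse existing tails $e_i^+\in D^+$ and heads $e_j^-\in D^-$ together with $e^+$ and $e^-$, so $D'^+=D^+\cup\{e^+\}$ and $D'^-=D^-\cup\{e^-\}$.

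It remains to bound $t=|F|$. Clearly $|F|\leq |\{(e_i^+,e^-):i\leq k\}|+|\{(e^+,e_j^-):j\leq k\}|\leq 2k\leq 2r$, which is not good enough; the real bound $\min\{r,|V|-2\}$ needs two separate arguments. For the bound $t\leq r$: I claim at most one of the two families can contribute, or more precisely that the forced arcs all share a common endpoint in a way that collapses the count. Observe that if $e^+\in D^-$, say $e^+=e_m^-$ for some $m$, then for every $i\geq m$ the arc $(e_i^+,e^-)$: hmm — instead the cleaner route is to note that $e^-\notin D^+$ or $e^+\notin D^-$ cannot both fail without creating a path making $e$ non-available; this is exactly Proposition \ref{prop:noCycles} territory. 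Concretely, if both $e^+\in D^-$ and $e^-\in D^+$, then writing $e^+=e_m^-$ and $e^-=e_\ell^+$, surjectivity of $\alpha$ gives $(e_\ell^+,e_m^-)=(e^-,e^+)\in D$ provided $\ell\leq m$, contradicting $e\in\cA(D)$; and if $\ell>m$ then $(e_\ell^+,e_m^-)$ is a loop-type index but $(e_m^+,e_\ell^-)\in D$ is a $e^+$-... I would sort out which of these cases forces which family of $F$ to be empty, concluding that $t\leq k\leq r$. For the bound $t\leq |V|-2$: each arc of $F$ is available, hence its underlying pair is undirected; all these pairs lie among the pairs $\{e^+,x\}$ and $\{x,e^-\}$ for $x\in V$. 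Every forced arc $(e_i^+,e^-)$ has head $e^-$ and every $(e^+,e_j^-)$ has tail $e^+$; I would argue the set of second endpoints involved is a subset of $V\setminus\{e^+,e^-\}$ (a pair cannot be $\{e^+,e^-\}$, already directed, nor a loop), so $t\leq |V|-2$.

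The main obstacle, and the step I expect to need the most care, is the bound $t\leq r$: it is the one place where availability of $e$ (not just simplicity of $D$) is genuinely used, and it requires the case analysis on whether $e^+\in D^-$ and whether $e^-\in D^+$, using surjectivity of $\alpha$ to rule out the bad configurations. The definitional manipulations (that $\alpha'$ is surjective onto $D'$, that $D'$ has rank $r+1$, and the tails/heads statement) are routine bookkeeping by contrast. I would also double-check the edge case $k=r$ versus $k<r$: since the conclusion only claims rank $r+1$ and we use $k+1$ decisive arcs with $k+1\leq r+1$, the relaxed inequality in Definition \ref{def:alpha} is exactly what makes this painless.
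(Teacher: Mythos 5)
Your plan is to append $e$ as the \emph{last} decisive arc $e_{k+1}$, but this is not what the paper does, and in fact it does not work. There are two problems.

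First, your computation of the image of $\alpha'$ is wrong: with $e$ placed last, the pair $(k+1,j)$ is only in the domain of $\alpha'$ when $j\geq k+1$, so the arcs $\{(e^+,e_j^-):j\leq k\}$ are \emph{not} in the image. The correct image is $D\cup\{(e_i^+,e^-):1\leq i\leq k\}\cup\{e\}$. Second, and more fundamentally, the forced arcs $(e_i^+,e^-)$ need not be available or already in $D$: if $e^-=e_i^+$ for some $i$ then $(e_i^+,e^-)$ is a loop, and if $(e^-,e_i^+)\in D$ then it is a reverse arc, so in either case $D'$ would fail to be a simple digraph. A minimal counterexample: take $D=\{(a,b)\}$ with decisive arc $e_1=(a,b)$, and let $e=(c,a)$ with $c$ a fresh vertex; $e$ is available, but the forced arc $(e_1^+,e^-)=(a,a)$ is a loop. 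Your passing remark that ``one argues via $e\in\cA(D)$ that $\{e^+,e^-\}\cap(D^+\cup D^-)$ behaves compatibly'' does not rescue this: availability of $e$ says nothing about whether $e^-$ coincides with or is adjacent to some $e_i^+$.

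The missing idea is that $e$ must be \emph{inserted at the right position}, not appended. The paper defines $\ell:=\min\{i:e_i\in Out(e^-)\}$ (with $\ell:=k+1$ if $Out(e^-)=\emptyset$), declares the new decisive sequence to be $e_1,\ldots,e_{\ell-1},e,e_\ell,\ldots,e_k$, and sets $f_i:=(e_i^+,e^-)$ for $i<\ell$ and $f_i:=(e^+,e_i^-)$ for $i\geq\ell$. The content of the proof is then to verify, using $Out(e^-)$, $In(e^+)$, and Proposition \ref{AboveBelow}, that each $f_i$ is either already in $D$ or available — this is exactly what your construction skips. Once this is established, the bound $t\leq\min\{r,|V|-2\}$ is immediate because there is at most one new arc $f_i$ per existing decisive arc $e_i$ (hence $t\leq k\leq r$), and each vertex $z\neq e^+,e^-$ appears in at most one $f_i$ (hence $t\leq|V|-2$); your own sketch of the $|V|-2$ bound is fine, but the case analysis you attempt for $t\leq r$ is both incomplete and unnecessary once $\ell$ is chosen correctly.
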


Before we prove the lemma, we need one more definition. 
Let $D$ be an $\alpha$-structure 
with decisive arcs $S=\{e_1,\ldots,e_k\}$, and let $x\in V$. 
We set
\begin{align*}
In (x) &:= 
	\Big\{ e_i\, :\, \text{there exists a path } P = (e_i,e_{j_1},\ldots,e_{j_m})
	\text{ s.t. } x = e_{j_m}^-\Big\}, \\
Out (x) &:= 
	\Big\{ e_i\, :\, \text{there exists a path } P = (e_{j_1},\ldots,e_{j_m},e_i)
	\text{ s.t. } x = e_{j_1}^+\Big\}. 
\end{align*}
The following proposition is rather simple.
\begin{proposition}\label{AboveBelow}
Let $D$ be an $\alpha$-structure 
with decisive arcs $e_1,\ldots,e_k$. Further, 
let $x,y \in V$ be vertices such that 
$(x,y)\in \cA(D)$. Then 
for all $e_i \in In(x)$, $e_j \in Out(y)$: $i<j$. 
In particular, 
$In(x)\cap Out(y) = \emptyset$.
\end{proposition}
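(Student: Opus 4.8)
The plan is to argue by contradiction using the very same observation that drove the proof of Proposition \ref{prop:noCycles}: if an $\alpha$-structure contained two ``crossing'' decisive indices connected through $x$ and $y$, then $\alpha$ would be forced to take a loop as a value. Concretely, suppose for contradiction that there are decisive arcs $e_i\in In(x)$ and $e_j\in Out(y)$ with $i\geq j$. By the definition of $In(x)$ there is a path $P_1=(e_i,e_{j_1},\ldots,e_{j_m})$ in $D$ with $x=e_{j_m}^-$, and by the definition of $Out(y)$ there is a path $P_2=(e_{\ell_1},\ldots,e_{\ell_p},e_j)$ in $D$ with $y=e_{\ell_1}^+$. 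Since $(x,y)\in\cA(D)$, the arc $(x,y)$ is available, so appending it to $P_1$ and then following $P_2$ would close a directed cycle $P_1+(x,y)+P_2$ in $D\cup\cA(D)$.

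The main step is then to invoke Proposition \ref{crucialProp} on the concatenated path $P_1$ followed by $P_2$ (they join legally: $P_1$ ends at $x$, the new arc goes $x\to y$, and $P_2$ starts at $y$), exactly as in the proof of Proposition \ref{prop:noCycles}. That proposition produces a monotone chain of decisive indices along the whole cycle; the first arc of the cycle is $e_i$ (contributing index $i$ as a tail index) and the last arc before closing up is $e_j$ (contributing index $j$ as a head index), and monotonicity forces $i\le j$. Combined with the available arc $(x,y)$ this yields that $(f_t^-,f_1^+)\in\Dback$ in the notation there, i.e. $(x,y)\in\Dback$, contradicting $(x,y)\in\cA(D)$. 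Alternatively, and perhaps more cleanly, one can bypass re-deriving this and simply cite Proposition \ref{prop:noCycles} directly: the digraph $D$ together with the available arc $e=(x,y)$ would contain the cycle just constructed, contradicting acyclicity of $D+e$.

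The ``in particular'' clause is immediate: if some $e_i$ lay in both $In(x)$ and $Out(y)$, then taking $e_i=e_j$ in the above would give $i<i$, absurd; so $In(x)\cap Out(y)=\emptyset$.

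I do not anticipate a genuine obstacle here — the content is entirely a repackaging of the acyclicity argument. The only thing to be careful about is the degenerate cases where one or both of the witnessing paths $P_1,P_2$ are trivial (i.e. $e_i$ itself already has $e_i^-=x$, so $m=0$, or $e_j^+=y$, so $p=0$); in those cases the constructed ``cycle'' may be a $2$-cycle or even force $x=e_i^+$, $y=e_j^-$ directly, and one should check that $(x,y)\in\cA(D)$ still gives the contradiction via Proposition \ref{prop:noCycles} (a $2$-cycle $e_i,(x,y)$ with $(x,y)$ available and reverse of an existing arc), or note that $(x,y)\in\cA(D)$ precludes $x=e_i^+,y=e_j^-$ with $i\le j$ since that pair would already be an arc of $D$ by surjectivity of $\alpha$. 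Handling these boundary cases is the only place requiring a line of care.
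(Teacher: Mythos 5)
There is a genuine gap. Your walk $P_1 + (x,y) + P_2$ runs from $e_i^+$ (the start of $P_1$) to $e_j^-$ (the end of $P_2$); these are not the same vertex in general, so this is not a cycle, and the subsequent appeal to acyclicity of $D+(x,y)$ or to Proposition~\ref{prop:noCycles} does not go through. Moreover, applying Proposition~\ref{crucialProp} to this walk is not valid, since $(x,y)\in\cA(D)$ is precisely \emph{not} an arc of $D$, so the concatenation is not a path in $D$. Even setting that aside, the indices produced by Proposition~\ref{crucialProp} need not coincide with $i$ and $j$: the proposition only guarantees some decomposition $f_s = (e_{i_s}^+, e_{j_s}^-)$, and knowing that the first arc \emph{is} $e_i$ only tells you $e_{i_1}^+ = e_i^+$, not $i_1 = i$. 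And the claim $(f_t^-,f_1^+)=(x,y)$ is also incorrect: with $f_1=e_i$ and $f_t=e_j$, that pair would be $(e_j^-,e_i^+)$, not $(x,y)$.

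The missing idea is the ``bridge arc''. Under the contradiction hypothesis $j\leq i$, surjectivity of $\alpha$ gives $\alpha\bigl((j,i)\bigr)=(e_j^+,e_i^-)\in D$. The paper then discards the boundary arcs of the two witness paths: $P_j - e_j$ is a $y$-to-$e_j^+$ path, $P_i - e_i$ is an $e_i^-$-to-$x$ path, and the bridge arc $(e_j^+,e_i^-)$ glues them into a $y$-$x$ walk that lies entirely in $D$. Now Proposition~\ref{prop:noCycles} applies directly: a $y$-$x$ path in $D$ together with $(x,y)$ would be a cycle, so $(x,y)$ cannot be available — contradiction. Note also that the paper concatenates in the order $(P_j-e_j)$, bridge, $(P_i-e_i)$, i.e.\ starting from $y$ and ending at $x$, which is the opposite of the order you proposed, and this is what makes the walk close up with the single available arc $(x,y)$. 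So while your global shape (contradiction via a forbidden cycle with $(x,y)$) is the right one, the actual construction of a cycle requires the bridge arc and truncation of the two witness paths, which your argument lacks.
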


\begin{proof}
Let $e_i \in In(x)$, $e_j \in Out(y)$ 
and let $P_i$ be the corresponding $e_i^+$-$x$-path starting with $e_i$, and 
let $P_j$ be the corresponding $y$-$e_j^-$-path ending with $e_j$. 
Assume $j\leq i$. By definition of an $\alpha$-structure, this implies that 
the arc $(e_j^+,e_i^-) \in D$. 
But then the concatenation of $(P_j-e_j)$, $(e_j^+,e_i^-)$ and $(P_i-e_i)$ 
is a directed walk in $D$ from $y$ to $x$, i.e.~contains a directed path from
$y$ to $x$. 
By Proposition \ref{prop:noCycles}, $(x,y)$ is not available, a contradiction. 
\end{proof}
\noindent
We are now ready to prove the above lemma.  
\begin{proof}[Proof of Lemma \ref{alphaProc}]
Let $D$ be an $\alpha$-structure of rank $r$ on a vertex set $V$, 
let $S=\{e_1,\ldots,e_k\}$ be a set of decisive arcs with $k\leq r$, 
and let $e=(v,w)\in \cA(D)$ be an available arc.  
Set 
$\ell := \min \{i: e_i \in Out(w) \}$ if $Out(w) \neq \emptyset$, 
and $\ell := k+1$ otherwise. 
For all $i<\ell$, set $f_i:=(e_i^+,w)$, and for all $i\geq \ell$, set $f_i:=(v,e_i^-)$. 
We claim that for all $1\leq i \leq k$, either $f_i \in D$ or $f_i\in \cA(D)$.

First, let $i <\ell$ and 
suppose for a contradiction that $f:=(w,e_i^+) \in D$. 
Since $D$ is an $\alpha$-structure, $f=(e_{j_1}^+,e_{j_2}^-)$ for some 
$j_1\leq j_2$. 
Now, $j_2 < i$ since otherwise $(e_i^+,e_{j_2}^-)$ is a loop in $D$. 
Furthermore, since $e_{j_1}^+=w$, by definition $e_{j_1} \in Out(w)$, so 
$\ell \leq j_1$ by definition of $\ell$. 
But this implies $\ell< i$, a contradiction. \\
Now let $i \geq \ell$. 
The only additional observation we need to make here is that by 
Proposition \ref{AboveBelow}, $\ell > j$ for all $e_j\in In(v)$. 
The rest is completely analogous to the first case. 
So we can assume that either $(v,e_i^-)\in \cA(D)$, 
or $(v,e_i^-)\in D$. 

We now check that $D'$ is an $\alpha$-structure of rank $r+1$,  
where $S' = \{e_1',\ldots, e_{k+1}'\}$ with 
$$ e_i'=\begin{cases}
	e_i & \text{ if } i<\ell \\
	e & \text{ if } i=\ell \\
	e_{i-1}& \text{ if } i>\ell 
\end{cases}$$
is a set of decisive arcs. That is, 
for all $u,z\in V$ we need to show that $(u,z)\in D'$ if and only if 
$(u,z)= (e_i'^+,e_j'^-)$ for some $1\leq i\leq j\leq k+1$.

Assume first that $(u,z)\in D'$. 
If $(u,z)\in D$ then $(u,z) = (e_i^+,e_j^-)$ for some $1\leq i\leq j\leq k$, and hence $(u,z) = (e_{i_*}'^+,e_{j_*}'^-)$ 
where $i_*\in\{i,i+1\}$ and $j_*\in\{j,j+1\}$. 
If $(u,z)\in D'\sm D$ then either $(u,z)=e=(e_\ell'^+,e_\ell'^-)$ or 
$(u,z)= f_i$ for some $1\leq i \leq t$. 
In the latter, we have that $(u,z)=(e_i^+,w)=(e_i'^+,e_\ell'^-)$ for $i<\ell$, 
and $(u,z)=(v,e_i^-)=(e_\ell'^+, e_{i+1}'^-)$ for $i\geq \ell$. 

For the opposite implication, let first $1\leq i \leq j\leq k+1$ such that $i,j\neq \ell$. 
Note that then $(e_i'^+,e_j'^-)=(e_{i_*}'^+,e_{j_*}'^-)$, 
where $i_*\in\{i,i+1\}$ and $j_*\in\{j,j+1\}$. 
If $i=\ell$, then $j\geq \ell$ and $(e_i'^+,e_j'^-)=e$ or $(e_i'^+,e_j'^-)= f_j$. 
If $j=\ell$, then $i\leq \ell$ and $(e_i'^+,e_j'^-)=e$ or $(e_i'^+,e_j'^-)= f_i$. 
In either case, $(e_i'^+,e_j'^-) \in D'$.
 
Finally, for every existing arc 
$e_i \in S$, we added at most one new arc $f_i$. 
But also, for every vertex $z\in V\setminus\{v,w\}$ at most one of the $f_i$ contains $z$. 
So $\size{\{f_1,\ldots,f_k\}\cap \cA(D+e)}\leq \min \{k, |V|-2\}\leq \min \{r, |V|-2\}$. 
\end{proof}

\subsection{OBreaker's strategy for the monotone rules}
\label{sec:OrientedCycleMain}

\begin{proof}[Proof of Theorem \ref{OrientedCycleMain}]
Recall that OMaker and OBreaker alternately direct edges 
of $K_n$, where OMaker directs exactly one edge in each 
round, and OBreaker directs at least one and at most 
$b$ edges in each round, where $b\geq 5n/6 +2$. OMaker's goal 
is to close a directed cycle, whereas OBreaker's goal is 
to prevent this. 
First, we provide OBreaker with a strategy, 
then we prove that he can follow that strategy 
and that it constitutes a winning strategy. 
At any point during the game let $D$ denote the digraph of 
already chosen arcs. By the rules of the game, $D$ has no 
loops and no reverse arcs. 

The main idea of OBreaker's strategy is to maintain that $D$
consists of a UDB $(A,B)$ and two $\alpha$-structures, located in $V\setminus B$
and $V\setminus A$, respectively, in such a way that each arc of $D$ starts in $A$ or ends in $B$.
For an illustration, see Figure \ref{fig2}. We show shortly that this is enough
to prevent cycles throughout the game. To succeed with a bias
$b\geq 5n/6+2$, we also need to keep control of the size
of the UDB and the rank of the mentioned $\alpha$-structures. For that reason,
we divide the strategy into three stages, in each we maintain different size conditions.
In particular, throughout Stage I, OBreaker builds up two large ``buffer sets'' 
$A'\subseteq A$ and $B'\subseteq B$, until their size is $n-b$. 
These buffer sets then allow OBreaker to play on two --- not necessarily disjoint --- boards in Stages II and III, 
each board having at most $b$ vertices.

 \begin{figure}[bp]
 \centering
\phantom{asdfasdfasdfaasdfasfasdfad}
	 \includegraphics
	 {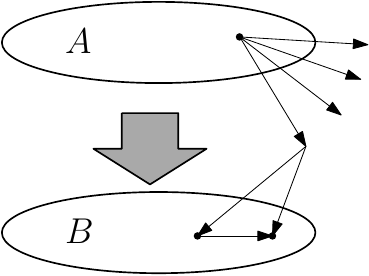}
\caption{The structure of the digraph OBreaker maintains. Thin arcs
represent the decisive arcs of the $\alpha$-structures that live on the vertex sets
$V\setminus A$ and $V\setminus B$ respectively.}
\label{fig2}
\end{figure}

If the strategy asks OBreaker 
to direct an arc $(x,y)$ where $(x,y)\in D$ already, then OBreaker ignores that command and 
proceeds to the next one. If the strategy asks OBreaker 
to direct an arc $(x,y)$ where $(y,x)\in D$ already, then OBreaker forfeits the game. 
By {\em adding $e$ to some $\alpha$-structure in $D$} we mean a strategy 
for OBreaker to direct the arcs $\{f_1,\ldots,f_t\}$ given by Lemma \ref{alphaProc}.

In {\bf Stage I}, OBreaker maintains a UDB $(A,B)$ so that after his 
 move in round $r$  
there exist integers $k,\ell$ such that the following properties hold.
\begin{mytabbing}
\>$(S1.1)$ \> $D(V\setminus B)$ is an $\alpha$-structure of rank $k$,  
	such that $D(V\setminus B)^+ \subseteq A$,\\[5pt]
\>$(S1.2)$ \> $D(V\setminus A)$ is an $\alpha$-structure of rank $\ell$, 
	such that $D(V\setminus A)^- \subseteq B$,\\[5pt]
\>$(S1.3)$ \> $k+\ell = r$, and  \\[5pt]
\>$(S1.4)$ \> $|A|-k = |B|-\ell = r$. 
\end{mytabbing}
OBreaker proceeds to Stage II once $|A|-k = |B|-\ell \geq n/6\geq n-b$, 
that is after round $\left\lceil n/6\right\rceil$.

Initially, before the first move in Stage I, we have $r=k=\ell=0$ and $A=B=\emptyset$. 
Now let $e=(v,w)$ be the arc OMaker directs in a particular round of Stage I. 
Assume first that $e\in \cA(D(V\setminus B))$. 
Then OBreaker adds $e$ to the $\alpha$-structure $D(V\setminus B)$ by directing 
the edges $\{f_1,\ldots,f_t\}\subseteq \cA(D(V\setminus B))$ given by Lemma \ref{alphaProc}.  
If $v\in \rest$ let $x\in V\setminus (A\cup B \cup \{v\})$. Then OBreaker directs all edges $(v,y)$ 
and $(x,y)$ for $y\in B$ 
and sets $A:=A\cup\{v,x\}$. (Note that we might delete a vertex from the
$\alpha$-structure on $V\setminus A$, but by
Lemma \ref{obs:trivialFacts} this does not affect $(S1.2)$.)
Otherwise $v\in A$ already, so OBreaker picks two arbitrary new 
vertices $v',x \in \rest$, directs all edges $(v',y)$ and $(x,y)$ for $y\in B$ 
and sets $A:=A\cup \{v',x\}$. 
In both cases,
he picks an arbitrary element $y' \in V\setminus (A\cup B)$, 
directs all edges $(x',y')$ for $x'\in A$, and sets $B:=B\cup\{y'\}$. 

Assume now that $e\notin \cA(D(V\setminus B))$. Then
since $(A,B)$ is a UDB, $e\in \cA(D(V\setminus A))$. 
Consider the digraph $\Dback$. Note that $(A',B'):=(B,A)$ is a UDB in $\Dback$ 
such that Properties $(S1.1)$--$(S1.4)$ hold (with $k':=\ell$ and $\ell':=k$), 
by Lemma \ref{dualMon}. 
Moreover, $\eback\in \cA(\Dback(V\setminus B'))$. 
Applying the strategy above to $\eback$ and $\Dback$, one obtains arcs 
$f_1,\ldots,f_t$ that OBreaker would direct in the ``dual game", plus updates
of $A'$ and $B'$. 
OBreaker now directs the reversed arcs $\back{f_1},\ldots,\back{f_t}$
and sets $A:=B'$ and $B=A'$.

\noindent
In {\bf Stage II}, OBreaker stops increasing the values $|A|-k$ and $|B| -\ell$. 
Now, he maintains a UDB $(A,B)$ and integers $k,\ell$ such that after each move of OBreaker  
\begin{mytabbing}
\>$(S2.1)$ \> $D(V\setminus B)$ is an $\alpha$-structure of rank $k$,  
	such that $D(V\setminus B)^+ \subseteq A$\\[5pt]
\>$(S2.2)$ \> $D(V\setminus A)$ is an $\alpha$-structure of rank $\ell$, 
	such that $D(V\setminus A)^- \subseteq B$\\[5pt]
\>$(S2.3)$ \>  $|A|-k,\ |B|-\ell \geq n/6\geq n-b$. 
\end{mytabbing}
OBreaker proceeds to Stage III as soon as $A\cup B = V$. 

Again, let $e=(v,w)$ be the arc OMaker directs in a particular round of Stage II 
and assume first that $e\in \cA(D(V\setminus B))$. 
Then OBreaker adds $e$ to the $\alpha$-structure $D(V\setminus B)$ 
using Lemma \ref{alphaProc}.  
If $v\in \rest$ then he directs all edges $(v,y)$ for $y\in B$ 
and sets $A:=A\cup\{v\}$. 
Otherwise $v\in A$ already, so OBreaker picks an arbitrary new 
vertex $v' \in \rest$, directs all edges $(v',y)$ for $y\in B$ 
and sets $A:=A\cup \{v'\}$. \\
Assume now that $e\notin \cA(D(V\setminus B))$. 
Then, since $(A,B)$ is a UDB, $e\in \cA(D(V\setminus A))$.
Similar to Stage I, OBreaker now uses the strategy
for the dual structure and claims the reversed arcs.

\noindent
In {\bf Stage III}, OBreaker maintains a UDB $(A,B)$ with $A\cup B = V$ such that 
\begin{mytabbing}
\>$(S3.1)$ \> $D(A)$ forms an $\alpha$-structure,\\[5pt]
\>$(S3.2)$ \> $D(B)$ forms an $\alpha$-structure, and\\[5pt]
\>$(S3.3)$ \> $|A|,|B| \leq b$. 
\end{mytabbing}
Let again $e=(v,w)$ be the arc OMaker directs in her previous move. 
Either $e\in \cA(D(A))$ or $e\in \cA(D(B))$. 
In the first case, OBreaker adds $e$ to the $\alpha$-structure $D(A)$ 
using Lemma \ref{alphaProc}; 
in the second case, OBreaker adds $e$ to the $\alpha$-structure $D(B)$, 
again by using Lemma \ref{alphaProc}. 
In case $t=0$ in Lemma \ref{alphaProc}, OBreaker does not need to 
direct any arc to reestablish the properties. Then OBreaker directs an arbitrary edge, 
say $e'\in \cA(D(A))$ (or in  $\cA(D(B))$), and adds $e'$ to the $\alpha$-structure 
using Lemma  \ref{alphaProc}.

\medskip
Let us first remark that if OBreaker can follow the proposed strategy 
and reestablish the properties of the certain stage in each move, 
then OMaker can never close a cycle. 
Indeed, throughout the whole game, OBreaker maintains a UDB $(A,B)$ 
such that $D(V\setminus A)$ and $D(V\setminus B)$ form $\alpha$-structures
(cf.~$(S\ast.1)$ and $(S\ast.2)$ of each stage). 
Moreover, also by $(S\ast.1)$ and $(S\ast.2)$ of each stage, 
at any point during the game we have for any $(v,w)\in D$ that 
$v\in A$ or $w\in B$. 
Suppose at some point, OMaker could close a cycle $C$ by directing an 
edge $e=(v,w)$. 
Since $(A,B)$ is a UDB and by the previous comment, 
all edges of $C$ must lie either completely in $V\setminus A$ or 
completely in $V\setminus B$. However, $D(V\setminus A)$ (and $D(V\setminus B)$) 
is an $\alpha$-structure, so by Corollary \ref{cor:noCycles}, 
OMaker cannot close a cycle in $V\setminus A$ (and $V\setminus B$, respectively).

\medskip

It remains to prove that OBreaker can follow the proposed strategy without forfeiting the game, 
that in each round he has to direct at most $b$ edges, 
and that the properties of each stage are reestablished.

Recall that the Properties $(S1.1)$--$(S1.4)$ hold before the first move in Stage I 
(for technical reasons we say ``after round 0") with $r=k=\ell=0$ and $A=B=\emptyset$.

Suppose now that for some $r\geq 0$, after round $r$ in \textbf{Stage I} the Properties $(S1.1)$--$(S1.4)$ hold. 
If $|A|-k,|B|-\ell\geq n/6$, then OBreaker proceeds to Stage II, 
so we can assume $|A|-k=|B|-\ell < n/6$. 
As said previously, since $(A,B)$ is a UDB, all the arcs 
$(x,y)$ with $x\in A$ and $y\in B$ are present in $D$ already, 
so OMaker's arc is completely in $V\setminus A$ or 
completely in $V\setminus B$. 
Assume first that for OMaker's arc $e=(v,w)$ it holds that 
$e\in \cA(D(V\setminus B))$. 
Since $D(V\setminus B)$ forms an $\alpha$-structure by $(S1.1)$, and by 
Lemma \ref{alphaProc}, OBreaker can add $e$ to that 
$\alpha$-structure.
By $(S1.2)$ we have $D(V\setminus A)^- \subseteq B$ before the update of the sets $A$ and $B$. 
So for all $z\in \rest$, all $y\in V\setminus A$, 
either $(z,y)\in D$ or $(z,y)\in \cA(D)$. 
Similarly, by $(S1.1)$ we have $D(V\setminus B)^+ \subseteq A$. 
So for all $z\in \rest$ and all $x\in V\setminus B$ 
either $(x,z)\in D$ or $(x,z)\in \cA(D)$. 
So OBreaker can claim all edges $(v,y)$ (or $(v',y)$) and $(x,y)$ 
for $y\in B$, 
and all edges $(x',y')$ for $x'\in A$ 
as requested by the strategy. \\
By Lemma \ref{alphaProc}, adding $e$ to the $\alpha$-structure in 
$V\setminus B$ takes OBreaker at most $k$ edges to direct.  
Furthermore, since $|A|-k, |B|-\ell$, and $k+\ell$ are bounded by $n/6$
(by assumption and $(S1.4)$), 
the strategy asks OBreaker to direct at most 
$$k + 2|B| +|A|+2 = 2(|B|-\ell) +2 (k+\ell) + (|A|-k)+2 
	\leq 5 \left\lfloor\frac{n}{6}\right\rfloor +2 
	\leq b$$
edges in one round of Stage I. 
We need to show that the properties are restored. 
For ease of notation, let us assume that $v$ was in $\rest$, 
so OBreaker added $v$ (and not $v'$) to $A$. 
Let $f_1,\ldots,f_t$ be the arcs 
OBreaker directs in round $r+1$. 
Let $D'=D\cup\{e,f_1,\ldots, f_t\}$ be the new digraph 
after OBreaker's move. 
It is obvious from the strategy description that after the update the pair 
$(A,B)$ forms a UDB again.
In round $r+1$, OBreaker adds $v$ and some vertex $x\in\rest$ to $A$, 
and some vertex $y'\in \rest$ to $B$. Therefore, 
by Lemma \ref{obs:trivialFacts}, and by Lemma \ref{alphaProc}, 
$D'(V\setminus B)$ is then an $\alpha$-structure 
of rank $k+1$, and $D'(V\setminus B)^+\subseteq A$. 
So $(S1.1)$ holds again. 
OBreaker's arcs belong to $A\times V$, so OBreaker may delete vertices 
from the $\alpha$-structure $D(V\setminus A)$.  
Hence, $(S1.2)$ holds by Lemma \ref{obs:trivialFacts}. 
For $(S1.3)$ note that after OBreaker's move we have one $\alpha$-structure
of rank $k+1$ and one of rank $\ell$.
Finally, for $(S1.4)$ note that the rank of the first $\alpha$-structure (with rank $k$) increases by one, while 
OBreaker adds two vertices to $A$. 
For the second $\alpha$-structure we do not change the rank, 
while we increase $|B|$ by one. 
\noindent
Assume now that $e\notin \cA(D(V\setminus B))$, that is $e\in \cA(D(V\setminus A))$. 
Applying the previous argument to the dual $\Dback$, it is clear that OBreaker can follow that 
strategy and that this strategy restablishes the Properties $(S1.1)$--$(S1.4)$ for $D$, using the 
self-duality of those properties and of $\alpha$-structures (cf.~Proposition \ref{dualMon}).

\medskip

For \textbf{Stage II}, $(S1.1)$ and $(S1.2)$ trivially imply $(S2.1)$ and $(S2.2)$. 
$(S2.3)$ follows by assumption of entering Stage II. 
So assume the three properties hold before OMaker's move in this stage. 
Assume first that for OMaker's arc $e=(v,w)$ it holds that 
$e\in \cA(D(V\setminus B))$.
As in Stage I, 
OBreaker can add $e$ to the $\alpha$-structure in 
$V\setminus B$.  
Similarly as in Stage I, by $(S2.2)$, 
for all vertices $z\in \rest$, all $y\in B$, either
$(z,y)\in D$ or $(z,y)\in \cA(D)$. 
So OBreaker can direct all edges $(v,y)$, or $(v',y)$, respectively, 
for $y\in B$ as requested by the strategy. \\
Furthermore, the strategy asks him to direct at most 
$$ |B|+k = |V|-(|A|-k) \leq b$$ 
edges, by Property $(S2.3)$. 
Finally, we verify that the properties are restored. 
By Lemma \ref{obs:trivialFacts} and Lemma \ref{alphaProc}, $D(V\setminus B)$ is again an $\alpha$-structure. 
Since $v$ is added to $A$ by directing all 
edges $(v,y)$ for $y\in B$, $(S2.1)$ follows. 
Again, $(S2.2)$ follows from Lemma \ref{obs:trivialFacts}. 
For $(S2.3)$, note that only the rank of the $\alpha$-structure $D(V\setminus B)$ increased by one, 
and OBreaker added exactly one new vertex to $A$ ($v$ or $v'$). 
The case $e\in \cA(D(V\setminus A))$ is analogous due to the duality  
of the properties.

\medskip 
Finally, it is straight-forward that OBreaker can follow the 
strategy proposed in {\bf Stage III}.
Since OBreaker plays in Stage II until $A\cup B = V$ 
(and the sets $A,B$ indeed grow in each round), and 
by $(S2.3)$ it follows that $|A|,|B|\leq b$. 
He then plays either inside $A$ or $B$ according to the strategy 
given by Lemma \ref{alphaProc} until all edges are directed. 
Therefore, in one round, OBreaker needs to direct at most 
$|A| -2< b$ or $|B|-2< b$ edges to add $e$ (or $e'$ respectively) 
to the $\alpha$-structure.

\medskip
This finishes the proof of Theorem \ref{OrientedCycleMain}.
\end{proof}

\section{The Oriented-cycle game -- strict rules} \label{strict}

In this section, we give a strategy for OBreaker for the strict Oriented-cycle game when playing with bias $b$. 
We prove that this strategy constitutes a winning strategy if $19n/20\leq b\leq n-3$. 
For $b\geq n-2$, a winning strategy is already given by the trivial strategy mentioned in the introduction. 
 
In our strategy for the monotone rules, OBreaker aims for a
UDB $(A,B)$ as global structure, and handles OMaker's edges locally using 
$\alpha$-structures.
The reason for $\alpha$-structures being powerful is that OBreaker stops directing 
further edges in this game once the digraph has the desired local structure. 
In the strict rules, this is of course no longer possible. 
Therefore, we introduce different 
structures  
that are more robust to adding edges. 
The strategy for the strict rules consists of two stages. 
Similarly as in the monotone game, in Stage I, OBreaker creates two large buffer sets 
$A'$ and $B'$, both being independent in $D$, such that $(A',B')$ forms a UDB in $D$. 
In Stage II, OBreaker then maintains two such sets of a certain minimum size.  

In Subsection~\ref{sec:structures}, we define the structures that OBreaker aims to maintain 
and state lemmas (cf.~Lemmas \ref{lem:BaseI}, \ref{lem:AddEdgesI}, \ref{lem:Transition}, \ref{lem:BaseII}, \ref{lem:AddEdgesII}) 
that are necessary to prove that OBreaker can win. 
We defer the proofs of these lemmas to Sections~\ref{sec:StrictStageI} and~\ref{sec:StrictStageII}. 
In Subsection~\ref{sec:OrientedCycleMain2}, we describe the strategy of OBreaker precisely and 
prove that it constitutes a winning strategy.  

\subsection{Riskless and protected digraphs}\label{sec:structures}

We call the structure of $D$ that OBreaker 
maintains during Stage I {\em riskless} and define it 
as follows. Following general notation, a {\em down set} of $[n]$ is a set $[k]$, for some $k \leq n$, 
and an {\em upset} of $[n]$ is a set $[n]\setminus [k]$, for some $k \leq n$. 

\begin{definition}\label{def:riskless}
A digraph $D$ is called {\em riskless of rank $r$} if 
there is a UDB $(A,B)$ with partitions $A=A_S\cup A_0$ and $B=B_S\cup B_0$
such that the following properties hold:
 
\begin{enumerate}
\item[$(R1)$] {\em Size:} $\size{\size{A}-\size{B}}\leq 1$  
and $|A_S|=|B_S|=r$. 
\item[$(R2)$] {\em Structure of $A_S$ and $B_S$:}
There exist enumerations $A_S=\{v_1,\ldots v_r\}$ and \\ $B_S=\{w_1,\ldots w_r\}$ 
such that
\begin{itemize}
		\item[$(R2.1)$] $(v_1,\ldots,v_r)$ and $(w_1,\ldots,w_r)$ induce transitive tournaments in $D$. 
		\item[$(R2.2)$] For all $z\in A_0\cup\rest$: $\{i: (v_i,z)\in D\}$ is a down set of $[r]$. 
		\item[$(R2.3)$] For all $z\in B_0\cup\rest$: $\{i: (z,w_i)\in D\}$ is an upset of $[r].$ 
\end{itemize}
\item[$(R3)$] {\em Stars:} For every $1\leq i\leq r$:
\begin{itemize}
		\item[$(R3.1)$] $e_D(v_i,A_0)\leq r+1-i$ and $e_D(v_i,\rest)\leq \max\{\size{A},\size{B}\}$.
 		\item[$(R3.2)$] $e_D(B_0,w_i)\leq i$ and $e_D(\rest,w_i)\leq \max\{\size{A},\size{B}\}$.  
\end{itemize}

\item[$(R4)$] {\em Edge set:} $D=D(A,B)\cup D(A_S,V\setminus B)\cup D(V\setminus A,B_S).$
\end{enumerate}
\end{definition}
\begin{figure} [bp]
\centering
\includegraphics[scale=0.9]{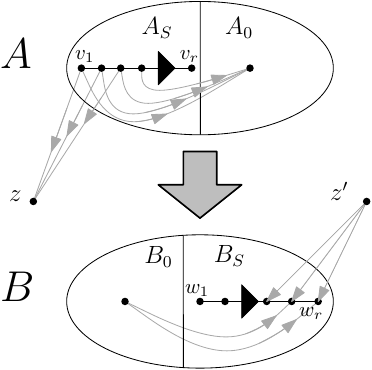}
 \caption{The structure of a {\em riskless} digraph.}
 \label{fig:riskless1}
 \end{figure}
 An illustration of some of the properties can be found in Figure \ref{fig:riskless1}. 
Observe that the definition of riskless is self-dual in the following sense. 
Recall that $\Dback$ denotes the digraph obtained by reversing all arcs of a digraph $D$. 
\begin{observation}\label{dual}
A digraph $D$ is a riskless digraph of rank $r$ with UDB $(A,B)$ 
if and only if $\Dback$ is a riskless digraph of rank $r$ with UDB $(B,A)$.  
\end{observation}

Note that the empty graph is riskless of rank $0$. 
In the strict Oriented-cycle game, assume that after round $r$, 
for some $0\leq r$, 
the digraph $D$ of directed edges is riskless of rank $r$ with UDB~$(A,B)$. 
Let $e\in \cA(D)$ be the arc that OMaker directs in round $r+1$. 
OBreaker's move is divided into what we call a {\em base strategy} 
and an {\em add-edges strategy}. In the base strategy, 
OBreaker chooses {\em at most} $b$ arcs to restore the properties of a riskless digraph. 
In the add-edges strategy, he directs further edges 
keeping the digraph riskless so that the total number of edges he directs in one 
round is exactly $b$. 
The following two lemmas make sure that he  can indeed do so for a certain number of rounds. 
 
\begin{lemma}[Base strategy for Stage I]\label{lem:BaseI}
Let $19n/20\leq b\leq n-3$.
For a non-negative integer $r\leq n/25-1$, 
let $D$ be a digraph which is riskless of rank $r$ with UDB $(A,B)$ as given 
in Definition \ref{def:riskless}. 
Assume that $\size{D}=r(b+1)$. 
Let $e\in \cA(D(V\setminus B))$ be an available arc in $V\setminus B$. 
Then there exists a set  $\{f_1,\ldots,f_t\} \se \cA\left(D+e\right)$ of 
at most $b$ available arcs  
such that $D':=D\cup\{e,f_1,\ldots,f_t\}$ is a riskless digraph of rank $r+1$.
Moreover, $e_{D'}(\restStrich,w_1')=0$, where $(A',B')$ is the underlying UDB of $D'$,
with $B'=B_S'\cup B_0'$ and $B_S'=\{w_1',\ldots, w_{r+1}'\}$ as 
in Definition \ref{def:riskless}.
\end{lemma}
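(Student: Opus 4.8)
The plan is to explicitly describe which arcs OBreaker directs, then verify each of $(R1)$–$(R4)$ for the resulting digraph $D'$, plus the extra condition $e_{D'}(\restStrich,w_1')=0$. First I would split into the two natural cases for the arc $e=(v,w)$ that OMaker directs. Either $v\in V\setminus(A\cup B)$ and $w\in V\setminus(A\cup B)$, or (say) $w\in B_0$ or $v\in A_0$; since $e\in\cA(D(V\setminus B))$ we know $w\notin B$, so in fact the relevant split is on whether $v\in A_0$ or $v\in V\setminus(A\cup B)$ (the subcase $v\in A_S$ being essentially the same with one fewer new vertex). In the main case, $v,w\in\rest$: OBreaker promotes $v$ to become the new top decisive vertex $v_{r+1}'$ of $A_S$, and chooses a fresh vertex from $\rest$ to add to $B_S$ as $w_{r+1}'$. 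I would then have OBreaker direct: (i) all arcs $(v_i',w_{r+1}')$ for $i\le r+1$ and $(v_{r+1}',w_j')$ for $j\le r+1$, so that the two enlarged spines remain transitive tournaments and $(v,w)=e$ becomes $(v_{r+1}', w_{r+1}')$-type or is absorbed into $D(A,B)$; (ii) the arcs forced by the UDB requirement between the two new vertices and the old parts $A,B$; and (iii) whatever arcs are needed so that $(R2.2)$, $(R2.3)$ remain down/up-set conditions after $v$ and the fresh vertex move into $A_S$, $B_S$ — roughly, $e_D(v_{r+1}', A_0\cup\rest)$ arcs out of $v$ and $e_D(B_0\cup\rest, w_{r+1}')$ arcs into the new $w$-vertex. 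The point of choosing $v$ (which OMaker just played an arc out of) as the new top of $A_S$, rather than an arbitrary vertex, is that OMaker's arc $e$ gets "for free" incorporated as a star edge $e_D(v_{r+1}',\cdot)$, which is allowed by $(R3.1)$ since $r+1-(r+1)=0$ forces the $A_0$-part to be empty but the $\rest$-part has budget $\max(|A|,|B|)$.

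**Counting OBreaker's arcs.**

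The main obstacle is showing the total number of arcs OBreaker must direct is at most $b$. Here I would use the hypothesis $\size D = r(b+1)$, i.e. OBreaker has so far directed exactly $rb$ arcs while OMaker directed $r$, together with the global edge-set description $(R4)$ and the star bounds $(R3)$. Since $A_S,B_S$ each have size $r$ and $|A|,|B|\le (n+1)/2$ roughly (from $(R1)$ plus $A\cup B\subseteq V$), the number of arcs incident to the two new spine vertices that are not yet present is controlled: the two new transitive-tournament spines cost at most $2r$ new arcs among old spine vertices; extending the UDB to the two new vertices costs at most $2(|A|+|B|)\le 2n$; the new stars $e_{D'}(v_{r+1}', A_0\cup\rest)$ and $e_{D'}(B_0\cup\rest, w_{r+1}')$ cost at most $2\max(|A|,|B|)\le n+1$; and correcting old down-sets/up-sets for the newly relocated vertices costs a bounded amount. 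The arithmetic should close because $r\le n/25$ is tiny and $b\ge 19n/20$; the slack between $19n/20$ and $n$ is what absorbs the lower-order terms. I expect the genuinely delicate point is double-counting: many of the arcs "requested" above are already in $D$ (e.g. arcs from old $A_S$ to the new $w$-vertex may already be forced by $(R4)$ since that vertex was in $\rest$), so the count of \emph{new} arcs $f_1,\ldots,f_t$ is substantially smaller than the naive sum, and I would need to argue the overlap carefully rather than bound each family independently.

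**Verifying the riskless properties and the last clause.**

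Having fixed the arc set, verifying $(R1)$ is immediate ($|A_S|,|B_S|$ each went up by one to $r+1$; $|A|,|B|$ each went up by one or two in a balanced way, so $\bigl||A|-|B|\bigr|\le 1$ is preserved). For $(R2)$: $(R2.1)$ holds by construction of the extended spines; $(R2.2)$ needs that for every $z\in A_0'\cup\restStrich$ the set $\{i\le r+1:(v_i',z)\in D'\}$ is a down set — for $i\le r$ this is inherited from the old down-set property (note $z$ may have just left $\rest$ by becoming a new vertex, handled separately), and $i=r+1$ is consistent because if $(v_{r+1}',z)\notin D'$ we must not have directed it, which is arranged by only directing $(v_{r+1}',\cdot)$ arcs into a down set; symmetrically for $(R2.3)$. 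For $(R3)$ the star bounds: the new index $i=r+1$ gives $e_{D'}(v_{r+1}',A_0')\le 0$, forcing the new top vertex to send no arcs into $A_0'$, which we respect; and $e_{D'}(v_i',A_0')\le r+1-i$ for old $i$ is actually \emph{easier} than before since the bound increased by one while $A_0$ may have shrunk; $e_{D'}(v_i',\restStrich)\le\max(|A|,|B|)$ holds since $\restStrich$ only shrank; the $w$-side is dual via Observation \ref{dual}. For $(R4)$, every arc OBreaker directs is by construction in $D(A',B')\cup D(A_S',V\setminus B')\cup D(V\setminus A',B_S')$, and OMaker's $e$ lies there too. Finally, the clause $e_{D'}(\restStrich,w_1')=0$: here $w_1'$ is the \emph{bottom} of the new $B_S$-enumeration, and by $(R2.3)$ $\{i:(z,w_i')\in D'\}$ is an upset, so $(z,w_1')\in D'$ would force $(z,w_i')\in D'$ for all $i$; I would arrange the enumeration of $B_S'$ so that the freshly added vertex is $w_1'$ (not $w_{r+1}'$) precisely when OMaker's arc did not create any arc into it from $\rest$ — or more cleanly, note that the \emph{new} vertex added to $B_S$ was in $\rest$ a moment ago and we only direct into it the arcs $(B_0\cup\rest,w_{new})$ needed to make $(R2.3)$ an upset, and we can take that upset to be $\{r+1\}$ only, i.e. direct nothing from $\restStrich$ into it, so choosing it as $w_1'$ after reindexing keeps $e_{D'}(\restStrich,w_1')=0$. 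The dual case $e\in\cA(D(V\setminus A))$ is not needed here since the lemma assumes $e\in\cA(D(V\setminus B))$; but the subcase $v\in A_0$ (resp. the symmetric $w$ landing in $B_0$, which cannot happen) is handled by not adding a new vertex to $A_S$ beyond $v$ — wait, $v\in A_0$ means $v$ is already in $A$, so OBreaker instead picks a fresh vertex from $\rest$ to be $v_{r+1}'$, and then OMaker's arc $(v,w)$ is an ordinary $D(A,V\setminus B)$ edge already consistent with $(R4)$, costing OBreaker correspondingly fewer arcs. I would present the $v\in\rest$ case in full and remark the others are analogous with a smaller arc count.
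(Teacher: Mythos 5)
There are two concrete gaps that break your plan, plus a third structural departure worth noting.

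\textbf{First}, in the subcase $v\in A_0$ you keep $v$ in $A_0$, promote a fresh vertex to $A_S'$, and claim that $e=(v,w)$ is ``an ordinary $D(A,V\setminus B)$ edge already consistent with $(R4)$''. This is false: Property $(R4)$ allows arcs into $V\setminus B$ only from $A_S$, not from all of $A$, so an arc from $A_0$ to $\rest$ or to $A$ violates $(R4)$. The paper does the opposite: it sets $u_S:=v$ (moving $v$ out of $A_0$ into $A_S$) and $u_A:=x_A$ (the fresh vertex replaces $v$ in $A_0$), which is exactly what your first instinct said before you second-guessed it.

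\textbf{Second}, your initial plan places the fresh $B$-vertex as $w_{r+1}'$, the sink of the transitive tournament on $B_S'$. Then $w_1'$ is the old source $w_1$, which $(R3.2)$ allows to receive up to $\max(|A|,|B|)$ arcs from $\rest$; the extra conclusion $e_{D'}(\restStrich,w_1')=0$ therefore fails. Your ``reindexing'' patch is not coherent: by $(R2.1)$ the index $1$ is tied to being the source of the tournament, so you cannot freely relabel the sink as $w_1'$. (You also repeatedly call $w_1'$ the ``bottom'' of the $B_S$-enumeration; it is the source.) The paper resolves this by making $y_B$ the new source, i.e.\ $w_1'=y_B$, and directing the family $\cF_3=\{(y_B,w_i)\}$; since $y_B\in\rest$ receives no arcs from $\rest$ (by $(R4)$ and its choice), the extra condition is immediate.

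\textbf{Third}, you insert $v$ at position $r+1$ of the $A_S$-spine, whereas the paper inserts at $\ell:=\min\{i:(v_i,v)\notin D\}$, chosen so that the arcs $(v_i,v)$ for $i<\ell$ are already present and the down-set structure $(R2.2)$ is repaired locally (via $\cF_7$). Placing $v$ at the end is a genuine alternative that trades $\cF_7$ for having to direct all of $(v_1,w),\dots,(v_r,w)$; it may work in the $v\in\rest$ case, but you never verify $(R2.2)$ or $(R3.1)$, never check that the arcs you direct are available in $D\cup\cA(D)$, and never check that none of them equals $\eback$. The slip ``$e_{D'}(v_{r+1}',A_0')\le 0$'' (the bound at rank $r+1$ is $\le 1$) indicates the index bookkeeping was not actually carried through, and without it the proposal does not constitute a proof.
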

The property $e_{D'}(\restStrich,w_1')=0$ in this lemma is 
needed to apply the next lemma. 

\begin{lemma}[Add-edges strategy for Stage I]\label{lem:AddEdgesI}
Let $19n/20\leq b\leq n-3$.
For a non-negative integer $r\leq n/25-1$, 
let $D$ be a digraph which is riskless of rank $r+1$ with UDB $(A,B)$ as given 
in Definition \ref{def:riskless}. 
Assume that $r(b+1)\leq \size{D}\leq (r+1)(b+1)<\binom{n}{2}$. 
Let $w_1$ be the source of the tournament inside $B$, as given in Property $(R2)$, 
and assume that $e_D(\rest,w_1)=0$ holds. 
Then there is a set of $(r+1)(b+1)- \size{D}\leq b$ available arcs $\cF \subseteq \cA(D)$ 
such that $D':=D\cup\cF$ is a riskless digraph of rank $r+1$. 
\end{lemma}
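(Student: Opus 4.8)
The plan is to set $N:=(r+1)(b+1)-\size{D}$, the number of arcs OBreaker must direct, and to assemble $\cF$ from two kinds of moves, which I will call \emph{fill-ups} and \emph{promotions}. A \emph{fill-up} directs a single available arc extending one of the ``stars'' of the riskless structure: either an arc $(v_i,z)$ with $z\in A_0\cup\rest$, chosen so that $\{j:(v_j,z)\in D\}$ stays a down set, or an arc $(z,w_i)$ with $z\in B_0\cup\rest$, chosen so that $\{j:(z,w_j)\in D\}$ stays an upset. A \emph{promotion} takes an isolated vertex $z\in\rest$ and moves it into $A_0$ (directing all arcs $(z,y)$, $y\in B$) or into $B_0$ (directing all arcs $(x,z)$, $x\in A$); this costs exactly $\size B$, respectively $\size A$, new arcs at the moment it is performed. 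First I would use fill-ups only; if $N$ is too large for that, I would prepend a controlled number of promotions, which enlarges the $UDB$ and thereby the capacity later available for fill-ups, and then finish with fill-ups.

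Fill-ups can be carried out one arc at a time while keeping $D$ riskless, and the hypothesis $e_D(\rest,w_1)=0$ guarantees a large supply of them. Indeed, by $(R2.3)$ the arcs from an $\rest$-vertex $z$ to $B_S$ form a suffix $\{w_{k_z},\dots,w_{r+1}\}$ with $k_z\ge 2$, and OBreaker may extend such a suffix downwards arc by arc (each step preserving the upset condition) until it reaches $w_1$; by $(R3.2)$ the number of arcs into each $w_i$ may be raised up to $\max(\size A,\size B)$, so $D(\rest,B_S)$ can be enlarged to as many as $(r+1)\max(\size A,\size B)$ arcs, and symmetrically for $D(A_S,\rest)$, while $D(A_S,A_0)$ and $D(B_0,B_S)$ still accept up to $\binom{r+2}{2}$ further arcs each. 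Every fill-up is genuinely new and genuinely available because $(R4)$ forbids the corresponding reverse arc, so within this range OBreaker has granularity one.

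A promotion instead adds $\size A$ or $\size B$ arcs in one shot, but raises $\max(\size A,\size B)$ and hence the caps above. So the strategy is: choose $p$ promotions, split between $A_0$ and $B_0$ so that $\size{\size{A}-\size{B}}\le 1$ is preserved and applied only to isolated vertices of $\rest$, with $p$ as large as possible subject to their total cost being at most $N$; perform them; then fill up to reach exactly $N$ arcs. One checks that the resulting $D'$, with $UDB$ $(A',B')$ the enlarged buffer pair, is riskless of rank $r+1$: $(R1)$ holds since the balance is preserved and $\size{A_S}=\size{B_S}=r+1$ because no decisive arc is touched; $(R2)$ holds since fill-ups keep the down-set and upset shapes and promoted vertices were isolated; $(R3)$ holds since fill-ups respect the caps and an isolated promoted vertex contributes nothing to any star; and $(R4)$ holds since all new arcs lie in $D(A',B')\cup D(A_S,V\setminus B')\cup D(V\setminus A',B_S)$ and, the promoted vertices being isolated, no arc is lost when they leave $\rest$. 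Finally, since $\size{D}\le (r+1)(b+1)$ and $r\le\nicefrac{n}{25}$ force $\size\rest$ to dwarf $2(r+1)\max(\size A,\size B)$, isolated $\rest$-vertices are plentiful, so there is always enough material to promote.

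The hard part will be the counting that makes this navigation hit $N$ exactly for every $N\in\{0,\dots,b\}$ and guarantees that the largest size reachable in this way is at least $(r+1)(b+1)$. Two estimates are needed. First, the maximum size of a riskless digraph of rank $r+1$ extending $D$ — promote enough isolated vertices, then saturate all stars — must be at least $(r+1)(b+1)$; here the lower bound $\size D\ge r(b+1)$ (which says the current $UDB$ and stars are already substantial) together with $b\ge\nicefrac{19n}{20}$ and $r+1\le\nicefrac{n}{25}$ makes the inequality close, with a direct check in the base case $r=0$. Second, once $p$ is taken maximal, the residual $N$ minus the cost of the $p$ promotions is smaller than the cost of one more promotion, of order $\max(\size{A'},\size{B'})$, and it must not exceed the fill-up capacity still available; this is exactly where $e_D(\rest,w_1)=0$ is essential, since it leaves $D(\rest,B_S)$ far from saturated, so its residual capacity — of order $(r+1)\max(\size{A'},\size{B'})$, hence much larger than a single promotion jump — can absorb the residual without overshooting. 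Matching these two inequalities against the bias is where the constants $\nicefrac{19n}{20}$ and $\nicefrac{n}{25}$ enter.
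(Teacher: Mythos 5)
Your proposal is essentially the paper's proof: Step~1 there is your ``promotions'' (enlarging $A_0$ and $B_0$ alternately with $\rest$-vertices having no arcs from $A$, resp.\ no arcs to $B$, while the budget exceeds $\max(|A|,|B|)$), and Step~2 is your ``fill-ups'' (arcs $(z,w_i)$ extending suffixes in $D(\rest,B_S)$, starting at $w_{r+1}$ and working down). The two ``hard'' estimates you flag are exactly what the paper verifies: the residual after maximally many promotions is $<\max(|A|,|B|)$, and $e_D(\rest,w_1)=0$ guarantees at least that much residual capacity in $D(\rest,B_S)$, while the supply of usable $\rest$-vertices comes from Proposition~\ref{obs:trivial} (in fact $|\widebar{X_A}\cup\widebar{Y_B}|<\nicefrac{2n}{5}+2$, so even fully isolated $\rest$-vertices as in your slightly stronger choice are plentiful).
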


The structure that OBreaker aims to maintain in Stage II is similar to the one given for Stage I.
The most important difference is that in this new structure we partition the sets of the UDB
further to distinguish the vertices according to their chance to become part
of a directed cycle. We maintain partitions
$A=A_D\cup A_{AD}\cup A_{S}\cup A_{0}$ 
and $B=B_D\cup B_{AD}\cup B_{S}\cup B_{0}$.
(The subscripts stand for {\em Dead}, {\em Almost Dead} and {\em Star}.)
$A_0$ and $B_0$ form the aforementioned buffer sets together with all dead vertices.

\begin{definition}\label{def:protected}
A digraph $D$ on $n$ vertices is called {\em protected} if 
there is a UDB $(A,B)$ with partitions $A=A_D\cup A_{AD}\cup A_{S}\cup A_{0}$ 
and $B=B_D\cup B_{AD}\cup B_{S}\cup B_{0}$ 
such that the following properties hold:
 
\begin{enumerate}
\item[$(P1)$] {\em Sizes:} $|A|,|B|\geq n/10 + 1$, and $|A_D\cup A_0|,\ |B_D\cup B_0|\geq n/10$.

\item[$(P2)$] {\em Dead vertices:} The pairs $(A_D,V\setminus A_D)$ and $(V\setminus B_D,B_D)$ form $UDBs$,
	$D(A_D)$ and $D(B_D)$ are transitive tournaments.

\item[$(P3)$] {\em Almost-dead vertices:} The pairs $(A_{AD},V\setminus A)$ and $(V\setminus B,B_{AD})$
	form $UDBs$.

\item[$(P4)$] {\em Structure of $A_{AD}\cup A_S$ and $B_{AD}\cup B_S$:} 
	There exist integers $k_1,\ell_1\geq 0$ and $0\leq k_2,\ell_2\leq n/25$
	and enumerations
\begin{align*}
A_{AD}=\{v_1,\ldots, v_{k_1}\}, & \  A_{S}=\{v_{k_1+1},\ldots, v_{k_1+k_2}\} \text{ and}\\
B_{S}=\{w_1,\ldots, w_{\ell_2}\}, & \ B_{AD}=\{w_{\ell_2+1},\ldots, w_{\ell_2+\ell_1}\}  \text{ such that}
\end{align*}
\begin{itemize}
	\item[$(P4.1)$] $(v_1,\ldots,v_{k_1+k_2})$ and $(w_1,\ldots,w_{\ell_1+\ell_2})$
		induce transitive tournaments.
	\item[$(P4.2)$]	 For all $z\in A_0\cup\rest$: 
					$\{i: (v_i,z)\in D\}$ is a down set of $[k_1+k_2]$
	\item[$(P4.3)$]	 For all $z\in B_0\cup\rest$: 
					$\{i: (z,w_i)\in D\}$ is an upset of $[\ell_1+\ell_2].$
\end{itemize}

\item[$(P5)$] {\em Stars:}
\begin{itemize}
	\item[$(P5.1)$] For all $1\leq i\leq k_2$: $e(v_{k_1+i},A_0)\leq n/25+1 -i$. 
	\item[$(P5.2)$] For all $1\leq i\leq \ell_2$: $e(B_0,w_{i})\leq n/25 - \ell_2+i$. 
\end{itemize}
\item[$(P6)$] {\em Edge set:} 
$E(D)=D(A,B)\cup D(A\setminus A_0,V\setminus B)\cup D(V\setminus A,B\setminus B_0).$
\end{enumerate}
\end{definition}
%

 \begin{figure}[tb]
 \centering
\begin{subfigure}{0.48\textwidth}
	\centering
	 \includegraphics[scale=0.5]
	 {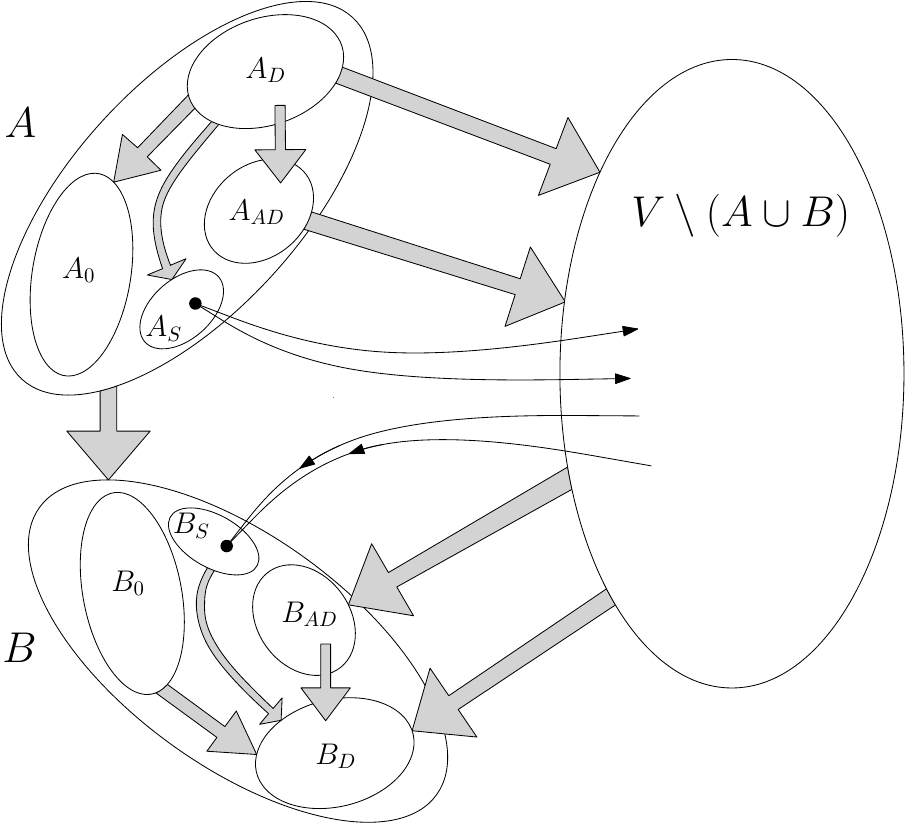}
\end{subfigure} 
\quad
 \begin{subfigure}{0.48\textwidth}
 	\centering
	\includegraphics[scale=0.5]
	{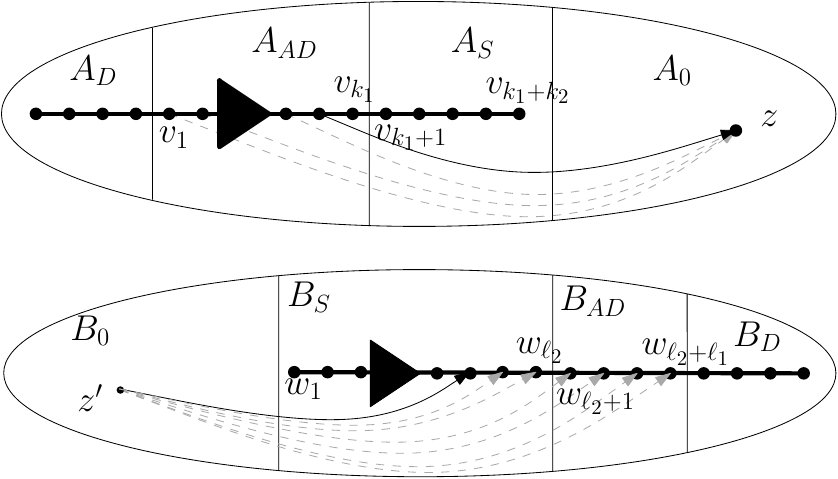}
 \end{subfigure}
 \caption{The global and local structure of a protected digraph. 
}
 \label{fig_small}
 \end{figure}
An illustration of a protected digraph can be found in Figure \ref{fig_small}.

Again, the definition of protected is self-dual in the following sense. 
\begin{observation}\label{dual2}
A digraph $D$ is a protected digraph with UDB $(A,B)$ 
if and only if $\Dback$ is a protected digraph with UDB $(B,A)$.  
\end{observation}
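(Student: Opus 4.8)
The plan is to make explicit the bijective correspondence between a structure certifying that $D$ is protected with $UDB$ $(A,B)$ and a structure certifying that $\Dback$ is protected with $UDB$ $(B,A)$. Since $D\mapsto\Dback$ is an involution, it suffices to prove one implication. So assume $D$ is protected with $UDB$ $(A,B)$, with partitions $A=A_D\cup A_{AD}\cup A_{S}\cup A_{0}$ and $B=B_D\cup B_{AD}\cup B_{S}\cup B_{0}$, witnessing integers $k_1,k_2,\ell_1,\ell_2$, and enumerations as in Definition \ref{def:protected}. For $\Dback$ I set $A':=B$, $B':=A$ and, inside these, $A_D':=B_D$, $A_{AD}':=B_{AD}$, $A_S':=B_S$, $A_0':=B_0$, and symmetrically $B_D':=A_D$, $B_{AD}':=A_{AD}$, $B_S':=A_S$, $B_0':=A_0$. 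For $(P4)$ I put $k_1':=\ell_1$, $k_2':=\ell_2$, $\ell_1':=k_1$, $\ell_2':=k_2$, and I reverse both orderings, i.e.\ $v_i':=w_{\ell_1+\ell_2+1-i}$ for $1\le i\le k_1'+k_2'$ and $w_j':=v_{k_1+k_2+1-j}$ for $1\le j\le\ell_1'+\ell_2'$.

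I would then verify the six properties one by one. Property $(P1)$ is symmetric under swapping $A\leftrightarrow B$ and $A_D\cup A_0\leftrightarrow B_D\cup B_0$, so it transfers verbatim. For $(P2)$ and $(P3)$ I use the two elementary facts that a pair $(X,Y)$ of disjoint sets is a $UDB$ in $D$ exactly when $(Y,X)$ is a $UDB$ in $\Dback$, and that reversing all arcs of a transitive tournament again yields a transitive tournament; applying these to the pairs and tournaments listed in $(P2)$ and $(P3)$ with the above relabelling gives the corresponding statements for $\Dback$. For $(P4.1)$, reversing the order of the spine of a transitive tournament keeps it transitive, so $(v_1,\ldots,v_{k_1+k_2})$ transitive in $D$ gives $(w_1',\ldots,w_{\ell_1'+\ell_2'}')=(v_{k_1+k_2},\ldots,v_1)$ transitive in $\Dback$, and symmetrically for the other spine. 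For $(P4.2)$ of $\Dback$ I start from $(P4.3)$ of $D$: for $z\in A_0'\cup\restStrich=B_0\cup\rest$ one has $(v_i',z)\in\Dback\iff (z,v_i')\in D$, and under the index reversal $i\mapsto\ell_1+\ell_2+1-i$ an upset of $[\ell_1+\ell_2]$ becomes a down set of $[\ell_1+\ell_2]=[k_1'+k_2']$; hence $\{i:(v_i',z)\in\Dback\}$ is a down set, as required. The transfer of $(P4.3)$ for $\Dback$ from $(P4.2)$ for $D$ is symmetric.

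For $(P5)$ the only subtlety is bookkeeping with the index shift. From $v_{k_1'+i}'=v_{\ell_1+i}'=w_{\ell_2+1-i}$ and $e_{\Dback}(w_{\ell_2+1-i},B_0)=e_D(B_0,w_{\ell_2+1-i})$, property $(P5.2)$ for $D$ applied with index $\ell_2+1-i$ gives $e_{\Dback}(v_{k_1'+i}',A_0')\le\nicefrac{n}{25}-\ell_2+(\ell_2+1-i)=\nicefrac{n}{25}+1-i$, which is $(P5.1)$ for $\Dback$; the reverse implication is analogous. Finally, $(P6)$ follows by reversing each of the three sets $D(A,B)$, $D(A\setminus A_0,V\setminus B)$, $D(V\setminus A,B\setminus B_0)$ and relabelling: they become $\Dback(A',B')$, $\Dback(V\setminus A',B'\setminus B_0')$, $\Dback(A'\setminus A_0',V\setminus B')$, whose union is exactly the edge set described in $(P6)$ for $\Dback$. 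I do not expect a genuine obstacle here: the proof is a routine (if slightly lengthy) symmetry check, entirely parallel to Observation \ref{dual}; the only points requiring care are matching the inequalities in $(P5.1)$/$(P5.2)$ and the down set/upset condition in $(P4.2)$/$(P4.3)$ after the enumerations have been reversed.
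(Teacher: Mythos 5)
The paper states Observation \ref{dual2} without proof, treating it as a routine self-duality check analogous to Observation \ref{dual}. Your proof is correct and supplies exactly the verification that the authors leave implicit: you exhibit the dual witness structure explicitly, and the crucial insight---that the enumerations of $A_{AD}\cup A_S$ and of $B_S\cup B_{AD}$ must be \emph{reversed} when passing to $\Dback$, so that the index map $i\mapsto\ell_1+\ell_2+1-i$ sends upsets to down sets and makes the asymmetric-looking bounds in $(P5.1)$ and $(P5.2)$ match up---is precisely the point one has to notice for this observation to hold, and you handled it correctly. Your bookkeeping for $(P5)$ (checking $\nicefrac{n}{25}-\ell_2+(\ell_2+1-i)=\nicefrac{n}{25}+1-i$) and the three-way split of $(P6)$ under reversal are both right. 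The reduction to one implication via the involution $D\mapsto\Dback$ is also valid.
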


When proceeding from Stage I to Stage II, we need to ensure that
OBreaker can obtain a protected digraph in the very first round of Stage II.
For this, we prove the following lemma, which roughly states that a riskless digraph
with large enough rank is protected.

\begin{lemma}\label{lem:Transition}
Let $n$ be large enough, and let $19n/20\leq b\leq n-3$.
Let $D$ be a digraph on $n$ vertices which is riskless of rank $r=\left\lfloor n/25\right\rfloor$, 
and assume that $|D| = r(b+1)$. Then $D$ is protected. 
\end{lemma}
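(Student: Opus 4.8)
\textbf{Proof plan for Lemma \ref{lem:Transition}.}

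The plan is to verify each of the six defining properties $(P1)$--$(P6)$ of a protected digraph directly from the six properties $(R1)$--$(R4)$ of a riskless digraph of rank $r=\lfloor n/25\rfloor$, by making a concrete choice of the refined partition. Concretely, I would set $A_D:=\emptyset$, $A_{AD}:=A_S$, $A_S:=\emptyset$ (keeping the same enumeration $v_1,\ldots,v_r$), and $A_0$ unchanged; dually $B_D:=\emptyset$, $B_{AD}:=B_S$, $B_S:=\emptyset$, $B_0$ unchanged. So the new parameters are $k_1=r$, $k_2=0$, $\ell_1=r$, $\ell_2=0$. With this dictionary, most of the translation is immediate: $(R2.1)$ gives $(P4.1)$, $(R2.2)$ gives $(P4.2)$, $(R2.3)$ gives $(P4.3)$; $(P5.1)$ and $(P5.2)$ are vacuous since $k_2=\ell_2=0$; $(P2)$ is vacuous since $A_D=B_D=\emptyset$; and for $(P6)$ note that $D(A\setminus A_0,V\setminus B)=D(A_S,V\setminus B)$ and $D(V\setminus A,B\setminus B_0)=D(V\setminus A,B_S)$ because $A\setminus A_0=A_S$ and $B\setminus B_0=B_S$, so $(R4)$ is exactly $(P6)$.

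The two points that require actual work are $(P3)$ and $(P1)$. For $(P3)$ I must check that $(A_{AD},V\setminus A)=(A_S,V\setminus A)$ is a $UDB$, i.e. every arc $(v_i,z)$ with $z\in V\setminus A$ is present in $D$. Since $V\setminus A=B\cup\rest$ and $(A,B)$ is a $UDB$, it suffices to handle $z\in\rest$; by $(R2.2)$ the set $\{i:(v_i,z)\in D\}$ is a down set of $[r]$, so I need to rule out that it is a proper down set, which means bounding the number of missing arcs. This is where the edge-count hypothesis $|D|=r(b+1)$ enters. I would compute a lower bound for $|D|$ in terms of the mandatory arcs (the $UDB$ $(A,B)$, the two transitive tournaments on $A_S$ and $B_S$, and the forced star edges into $B$ from $A_S$ and into $B_S$ from $V\setminus A$), observe via $(R4)$ that $D$ contains \emph{no} other arcs, and conclude that the total slack is tiny compared to $n$. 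Combined with $b\ge 19n/20$ and $r\le n/25$, this forces $|A|,|B|$ to be large (this is how $(P1)$'s first inequality $|A|,|B|\ge n/10+1$ comes out, using that $2|A|\cdot|B|\ge$ roughly $|D|\ge r(b+1)$ is too weak — instead I use $|A|+|B|\le n$ together with $e_D(A,B)=|A||B|\le |D|$ and the near-equality to pin $|A|,|B|$ each close to $n/2$); and it simultaneously shows each $v_i\in A_S$ sends arcs to all but $O(1)$ vertices of $\rest$, hence (after possibly moving those $O(1)$ offending vertices, or rather observing $\rest$ is itself large and the down-set structure is nested) that $(A_S,V\setminus A)$ is in fact a full $UDB$. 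The same argument dualizes for $B$. Finally $(P1)$'s second part, $|A_D\cup A_0|=|A_0|\ge n/10$ and $|B_0|\ge n/10$, follows since $|A_0|=|A|-r\ge n/2-O(1)-n/25\ge n/10$ for $n$ large.

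The main obstacle I expect is $(P3)$: showing the star-sets from $A_S$ into $\rest$ are complete, not just down sets. The danger is an arc $(z,v_i)\in D$ for $z\in\rest$, which would both violate the $UDB$ and potentially create the room for OMaker to close a cycle later. The resolution hinges on $(R4)$ — the edge set of a riskless digraph is \emph{exactly} $D(A,B)\cup D(A_S,V\setminus B)\cup D(V\setminus A,B_S)$ — so any arc incident to $\rest$ either starts in $A_S$ or ends in $B_S$; in particular no arc of $D$ points from $\rest$ into $A_S$, which immediately gives that $(A_S,V\setminus A)$ is a $UDB$ once we also know arcs from $A_S$ to $\rest$ that are "missing" simply haven't been claimed yet (they are available, not reversed). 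Re-reading $(R4)$ carefully, an arc $(z,v_i)$ with $z\in\rest$, $v_i\in A_S$ is not of any of the three allowed forms, so it is absent; hence $\{i:(z,v_i)\in D\}=\emptyset$ and dually every potentially-missing arc $(v_i,z)$ is merely undirected. But the definition of protected in $(P3)$ demands these arcs be \emph{present} in $D$. So the genuine content is: under the tight count $|D|=r(b+1)$, OBreaker's base and add-edges strategies of Stage I have already directed all of $D(A_S,\rest)$ — this is the role of the add-edges Lemma \ref{lem:AddEdgesI}, which tops up each round to exactly $(r+1)(b+1)$ edges precisely so that these star edges get filled. I would therefore argue that a riskless digraph with the maximal edge count $r(b+1)$ must have $e_D(v_i,\rest)$ as large as the budget allows, and a counting comparison ($|D|=e_D(A,B)+e_D(A_S,\rest)+e_D(\rest,B_S)+\binom{r}{2}+\binom{r}{2}$ type identity from $(R4)$, against $r(b+1)$) forces $D(A_S,\rest)=A_S\times\rest$ and $D(\rest,B_S)=\rest\times B_S$ completely, giving $(P3)$. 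Everything else is bookkeeping.
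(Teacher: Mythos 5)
The central gap is your choice of partition: you set $A_{AD}:=A_S$, $A_S:=\emptyset$ (so $k_1=r$, $k_2=0$), and dually for $B$. With that choice, $(P3)$ is \emph{not} vacuous — it demands that $(A_{AD},V\setminus A)$ be a full $UDB$, i.e.\ every arc $(v_i,z)$ with $z\in\rest$ is already present in $D$. But that is simply false for a riskless digraph of rank $r=\lfloor n/25\rfloor$ with $|D|=r(b+1)$. Property $(R3.1)$ gives $e_D(v_i,\rest)\le\max(|A|,|B|)$, and $\max(|A|,|B|)<n/5+1$ by Proposition~\ref{obs:trivial}, while $|\rest|>3n/5-1$. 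So for each $v_i$, of order $n$ arcs into $\rest$ are \emph{missing} — not a handful, as your plan assumes. Your later claim that the edge count forces $D(A_S,\rest)=A_S\times\rest$ therefore cannot be correct; the edge count cannot compensate, since $r\cdot|\rest|\approx (n/25)(3n/5)\approx 0.024\,n^2$ just for these missing arcs would overshoot $|D|=r(b+1)\approx 0.038\,n^2$ once combined with $e_D(A,B)=|A|\cdot|B|$ and the rest.

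A related error: you try to conclude that $|A|,|B|$ are each ``close to $n/2$.'' They are not: $|A|\cdot|B|\le |D|=r(b+1)<n^2/25$ together with $\big||A|-|B|\big|\le 1$ forces $\max(|A|,|B|)<n/5+1$, which is exactly Proposition~\ref{obs:trivial}(i). Your derivation of $|A_0|\ge n/10$ from $|A|\ge n/2-O(1)$ therefore does not go through. The correct argument for $(P1)$ (as in the paper) uses the \emph{lower} bound on $|D|$: expand $|D|$ via $(R4)$, upper-bound each piece using $(R1)$, $(R3.1)$, $(R3.2)$, and solve for $a_0=|A_0|$; the inequality $r(b+1)\le(r+a_0+1)^2+4r^2+2r(a_0+1)$ forces $a_0+1\ge n/10+3$ when $r=\lfloor n/25\rfloor$ and $b\ge 19n/20$.

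The fix is to choose the opposite assignment: $A_D=A_{AD}=B_D=B_{AD}=\emptyset$, keeping $A_S$ and $B_S$ exactly as they are (so $k_1=\ell_1=0$, $k_2=\ell_2=r$). Then $(P2)$ and $(P3)$ are vacuous, $(P4.1)$–$(P4.3)$ are $(R2.1)$–$(R2.3)$ verbatim, $(P5.1)$–$(P5.2)$ come directly from $(R3.1)$–$(R3.2)$ (note these read $e(v_i,A_0)\le r+1-i$ and $e(B_0,w_i)\le i$, which match $(P5)$ with $k_2=\ell_2=r$), and $(P6)$ is $(R4)$. The only nonvacuous work is $(P1)$, done by the counting argument above. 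Your dictionary for $(P4)$ and $(P6)$ is fine; it is the $A_S\leftrightarrow A_{AD}$ swap that breaks everything.
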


During Stage II, OBreaker's strategy is similar to that of Stage I.
After OMaker adds one further arc $e$ to the digraph $D$, OBreaker's goal is as follows. At first he follows what we call a {\em base strategy} in which he restores the properties of a protected digraph, and afterwards he follows 
what we call an {\em add-edges strategy} in which he keeps the digraph protected until exactly $b$ arcs are chosen. The following two lemmas make sure that he indeed can do so until the end of the game.

\begin{lemma}[Base strategy for Stage II]\label{lem:BaseII}
Let $D$ be a digraph which is protected and let $e=(v,w)\in \cA(D(V\setminus B))$ be an 
available arc in $V\setminus B$. Then there exists 
a set $\{f_1,\ldots,f_t\} \se \cA(D+e)$ 
of at most $b$ available arcs 
such that $D':=D\cup\{e,f_1,\ldots,f_t\}$ is protected. 
\end{lemma}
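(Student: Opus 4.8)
The plan is to mirror the structure of the monotone-rules argument (Lemma \ref{alphaProc}) but now for the \emph{protected} structure, handling OMaker's new arc $e=(v,w)\in\cA(D(V\setminus B))$ by a careful case analysis according to where $v$ and $w$ sit relative to the partition $A=A_D\cup A_{AD}\cup A_S\cup A_0$, $B=B_D\cup B_{AD}\cup B_S\cup B_0$. Since $(A,B)$ is a $UDB$ and $e\notin D(V\setminus B)$ is impossible only when $e$ lies entirely in $V\setminus B$, the relevant endpoints are $v\in A_S\cup A_0\cup(\rest)$ and $w\in B_S\cup B_0\cup(\rest)\cup A_{S}\cup\ldots$; the key sub-case is when $v$ is a genuinely new vertex of $\rest$ (so it must be absorbed into $A_S$, incrementing $k_2$) and $w$ is either in $\rest$ or already in the star part of $B$. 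I would first treat the ``typical'' case: $v\in\rest$, $w\in\rest$. Then OBreaker appends $v$ as the new top vertex $v_{k_1+k_2+1}$ of the transitive tournament on $A_{AD}\cup A_S$, appends $w$ to the bottom of the tournament on $B_{AD}\cup B_S$ as $w_0$ (re-indexing), and directs exactly the arcs needed to restore $(P2)$--$(P6)$: namely $(v,z)$ for all $z\in B$ and all $z$ below $v$ in the $A$-tournament so that $(P4.1)$ and $(P4.2)$ hold, $(z,w)$ for the symmetric requirements, plus the arc from $v$ into $w$ if not already forced. One checks the down-set/up-set conditions $(P4.2),(P4.3)$ are preserved because $v$ (resp.\ $w$) is placed at the extreme end of the ordering.

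The heart of the proof is bounding the number $t$ of arcs OBreaker directs by $b$, and this is where the size conditions $(P1)$ and the star bounds $(P5)$ are essential. Absorbing $v$ into $A_S$ costs at most $|B|$ arcs (into $B$) plus at most $k_1+k_2\le k_1+\nicefrac{n}{25}$ arcs (down into the $A$-tournament) plus handling the stars. When instead $v\in A_S$ already (no new vertex), OBreaker picks a fresh $v'\in\rest$ and absorbs \emph{that} into $A_S$ — the same accounting applies — while the new arc $e$ itself only needs to be ``forced'' inside the existing star structure, which by $(P5.1)$ costs few edges. The worst case should be when $v\in A_0$ (a buffer/dead vertex): then directing $(v,w)$ threatens to create structure inside $A_0\cup\rest$, and OBreaker must either move $v$ out of $A_0$ into $A_S$ (again costing $\le|B|+k_1+k_2+O(\nicefrac n{25})$ arcs) or, if $v\in A_D$, the arc is already present by $(P2)$ and nothing is needed. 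Since $|A|,|B|\le n-|A_D\cup A_0|\le n-\nicefrac n{10}=\nicefrac{9n}{10}$ by $(P1)$ (applied to the opposite side), and the extra $O(\nicefrac n{25})$ terms from the star/tournament parts are comfortably absorbed, the total stays below $\nicefrac{19n}{20}\le b$. After directing these arcs OBreaker must pad to exactly $b$ edges, but that is deferred to a separate add-edges lemma (Lemma \ref{lem:AddEdgesII}, in a later section), so here it suffices to produce \emph{at most} $b$ arcs.

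I would organize the case distinction as: (i) $e$ already forced (e.g.\ $v\in A_D$ or $w\in B_D$ or both endpoints in places where the $UDB$/dead structure already dictates the arc) — then $t=0$; (ii) $v\in\rest$ — absorb $v$ into $A_S$; (iii) $v\in A_0\setminus A_D$ — promote $v$ to $A_S$ (or absorb a fresh vertex and argue the new arc is harmless because $w$'s position makes it forced); (iv) $v\in A_{AD}$ or $v\in A_S$ — here $(P3)$ resp.\ the tournament structure already give most arcs from $v$, so only the ``downward'' arcs to align $w$'s up-set condition $(P4.3)$ are needed, a small number bounded via $(P5.2)$ and $k_2,\ell_2\le\nicefrac n{25}$. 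Symmetrically one must also respect the constraints on $w$'s side in each case, but by the self-duality Observation \ref{dual2} the bookkeeping is parallel. In every branch one verifies $(P1)$ still holds (sizes only grow, and $A_D\cup A_0$, $B_D\cup B_0$ are never shrunk below $\nicefrac n{10}$ because we only move vertices \emph{into} the star parts, and the buffers were built large in Stage I), $(P2)$--$(P3)$ hold since we never disturb dead/almost-dead vertices except to add arcs already implied, $(P4)$ holds by the extreme-end placement, $(P5)$ holds by re-checking the star counts after re-indexing, and $(P6)$ holds by construction.

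The main obstacle I anticipate is case (iii)/(iv): when $v$ lies in $A_0$ or $A_{AD}$, directing $(v,w)$ can interact with the delicate down-set property $(P4.2)$ and the per-vertex star bounds $(P5.1)$, and one has to be sure that promoting the relevant vertex into $A_S$ does not overflow the cap $k_2\le\nicefrac n{25}$ — this is exactly why Stage I runs only $\lfloor\nicefrac n{25}\rfloor$ rounds and why $(P4)$ caps $k_2,\ell_2$. If the cap is about to be exceeded, OBreaker must instead ``retire'' an old star vertex into $A_{AD}$ (which only costs arcs into $V\setminus A$, already partially present, cf.\ $(P3)$) before promoting the new one. Getting this rotation to cost at most $b$ arcs, simultaneously on both the $A$- and $B$-sides if OMaker's arc forces action on both, is the technically fiddly point; but the slack between $\nicefrac{19n}{20}$ and the $\nicefrac{9n}{10}+O(n/25)$ upper bound on the count gives enough room, and the promised property from the previous base/add-edges lemmas guarantees the hypotheses needed to start each sub-argument cleanly.
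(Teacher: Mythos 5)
Your proposal captures the main ingredients of the paper's argument: a case split based on $v$'s location in the partition of $A$, insertion of $v$ into the transitive tournament on the $A$-side, a ``retirement'' rotation that promotes $v_1$ toward $A_D$ and $v_{k_1+1}$ from $A_S$ into $A_{AD}$ so that $k_2$ never overshoots $\nicefrac{n}{25}$, and a bound on $t$ coming from $(P1)$. However, there is a genuine gap in the treatment of $w$. Since $e=(v,w)\in\cA(D(V\setminus B))$, both endpoints lie in $V\setminus B$, so $w\notin B$ and there is nothing to repair on the $B$-side. Your plan to ``append $w$ to the bottom of the tournament on $B_{AD}\cup B_S$'' is therefore both unnecessary and harmful to the accounting: inserting $w$ into $B$ would require directing all arcs $(x,w)$ for $x\in A$ to keep $(A,B)$ a $UDB$, an extra $\sim|A|$ arcs, and since $|A|+|B|$ can approach $n$ during Stage~II, the total would exceed $b$.

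In the paper's proof the entire $B$-partition is left fixed. The only role $w$ plays is in the down-set condition $(P4.2)$ at $z=w$ --- you cite $(P4.3)$, but that condition concerns arcs from $B_0\cup\rest$ into $B_S\cup B_{AD}$ and is unaffected by $e$. OBreaker repairs $(P4.2)$ simply by directing $(v_i,w)$ for all $i$ below $v$'s insertion index $\ell$; combined with the arcs that promote $v$ and perform the rotation, the total number of arcs directed is at most $|V\setminus B| + O(\nicefrac{n}{25})\leq \nicefrac{9n}{10} + O(\nicefrac{n}{25})<\nicefrac{19n}{20}\leq b$, using $(P1)$ and $(P5.1)$. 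Finally, note that Observation~\ref{dual2} is invoked only in the strategy description, to reduce the case $e\in\cA(D(V\setminus A))$ to this one-sided lemma; within Lemma~\ref{lem:BaseII} itself no duality is used, so there is no need (nor room in the budget) for simultaneous bookkeeping on both sides.
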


\begin{lemma}[Add-edges strategy for Stage II]\label{lem:AddEdgesII}
Let $D$ be a digraph which is protected. 
Then, unless $D$ is a tournament on $n$ vertices, 
there exists an available arc $f\in \cA(D)$ such that 
$D+f$ is protected.  
\end{lemma}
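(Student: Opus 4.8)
The plan is to show that as long as $D$ is protected but not yet a tournament, OBreaker can direct one more available arc while maintaining all of $(P1)$--$(P6)$. The key point is that adding a single arc can only \emph{grow} the relevant parameters (the sizes in $(P1)$, the down/up-sets in $(P4)$) in the favourable direction, provided the arc is chosen to respect the ``shape'' of the structure; the only genuine constraints to respect are the counting bounds $(P5)$ and the edge-set constraint $(P6)$. I would argue by a short case analysis on which available arc exists.

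First I would fix the $UDB$ $(A,B)$ and the partitions witnessing that $D$ is protected. Since $D$ is not a tournament, there is an undirected pair $\{x,y\}$, so $\cA(D)\neq\emptyset$; by symmetry of $\cA(D)$ we may orient it either way. The first and easiest sub-case is when the pair $\{x,y\}$ lies with $x\in A$, $y\in B$ — impossible, since $(A,B)$ is a $UDB$ and that arc is already directed. So every available pair lies inside $V\setminus A$ or inside $V\setminus B$; by Observation \ref{dual2} it suffices to treat $\{x,y\}\subseteq V\setminus B$, i.e. $x,y\in A_0\cup(V\setminus(A\cup B))$ together with possibly one endpoint in $A\setminus A_0$. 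I would then split according to where $x,y$ sit:

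\begin{itemize}
\item[(i)] If one endpoint, say $x$, lies in $A\setminus A_0 = A_D\cup A_{AD}\cup A_S$, then by $(P2)$, $(P3)$, $(P4.1)$ the relation of $x$ to $V\setminus A$ is already completely fixed, so the undirected pair must have both endpoints in $A_0\cup\rest$, handled in (ii)--(iv). (Formally: a vertex of $A_D$ is in a $UDB$ to everything, a vertex of $A_{AD}$ dominates all of $V\setminus A$, and the $A_{AD}\cup A_S$ tournament is complete, so no available pair meets $A\setminus A_0$ except inside $A_0\cup\rest$.)
\item[(ii)] If $x\in \rest$ and $y\in\rest$: direct $(x,y)$ arbitrarily. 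None of the partition sets changes, $(P1)$ is untouched, the down-set/up-set conditions $(P4.2)$, $(P4.3)$ and the star bounds $(P5)$ do not involve arcs inside $\rest$, and $(P6)$ holds since $(x,y)\in D(A\setminus A_0,V\setminus B)$ fails but $x\notin A$ — wait: I must instead move $x$ into $A_0$ or move $y$ into $B_0$ to keep $(P6)$. I would move $x$ into $A_0$ and direct $(y,x)$ so that the new arc lies in $D(V\setminus A,B\setminus B_0)$? No — cleaner: move $y$ to $B_0$ is wrong too. The right bookkeeping: put $x$ into $A_0$, and for the new arc to be legal in $(P6)$ we need it to start in $A\setminus A_0$ or end in $B\setminus B_0$, which forces us instead to route through an existing star vertex; see below.
\end{itemize}

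Rather than the above, the honest plan is: \textbf{always add the arc by attaching $\rest$ to an existing star or dead class}, which is exactly how riskless/protected structures absorb new vertices. Concretely, pick any $z\in\rest$ (exists unless $A\cup B=V$; if $A\cup B=V$ but $D$ is not a tournament, then there is an undirected pair inside $A_0$ or inside $B_0$, and I move one of its vertices into $A_D$ resp.\ $B_D$ after directing all $\binom{2}{2}$ — handled as a separate short case using $(P2)$). For $z\in\rest$: among $v_1,\dots,v_{k_1+k_2}$ let $j$ be maximal with $(v_j,z)\in D$ (or $j=0$), and among $w_1,\dots$ let $i$ be minimal with $(z,w_i)\in D$; by $(P4.2)$--$(P4.3)$ these are a down-set and an up-set, and since $D$ is not a tournament, either $v_{j+1}z$ or $zw_{i-1}$ is undirected, or $z$ has some undirected pair to another $\rest$-vertex, or to $A_0\cup B_0$. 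In the first case direct $(v_{j+1},z)$, extending the down-set by one; this keeps $(P4.2)$, it does not touch $(P1)$, and for $(P5.1)$ we need $e(v_{j+1},A_0)\leq n/25+1-(j+1-k_1)$ only when $v_{j+1}\in A_S$, i.e. $j+1>k_1$ — but directing $(v_{j+1},z)$ with $z\in\rest$ does not increase $e(v_{j+1},A_0)$, so $(P5)$ is untouched; and $(P6)$ holds since $v_{j+1}\in A\setminus A_0$. The symmetric move handles $(z,w_{i-1})$. If instead the only undirected pairs at $z$ go to $A_0\cup B_0\cup\rest$, direct such a pair pointing \emph{into} $z$ from $A_0$ (legal for $(P6)$? no), so the final piece is to note that if $z$ is dominated by all of $A_S\cup A_{AD}$ and dominates all of $B_S\cup B_{AD}$, one may move $z$ into $A_{AD}$ (appending it as $v_{k_1+1}$, shifting $A_S$), which only requires $(z,V\setminus A)\subseteq D$ — arrangeable by first directing the $\leq |V\setminus A|$ needed arcs; but that is more than one arc, contradicting the lemma.

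Given the ``single arc'' requirement, the correct and simplest argument is the one via (ii')/(iii') above: \textbf{an undirected pair always either lies inside $\rest$, or inside $A_0$, or inside $B_0$, or joins a star/almost-dead vertex of $A$ (resp.\ $B$) to $A_0\cup\rest$ (resp.\ $B_0\cup\rest$)}, and in every one of these cases one can orient one such pair so that a down-set stays a down-set, an up-set stays an up-set, no $A_0$-degree or $B_0$-degree of a star vertex increases beyond its current value, and $(P6)$ is preserved — possibly after reclassifying at most one vertex from $\rest$ into $A_0$ or $B_0$ (which only helps $(P1)$) or from $A_0$ into $A_D$ when it becomes a source of a $UDB$. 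I would write this out as four short cases. \textbf{The main obstacle} I anticipate is exactly the $(P6)$ edge-set bookkeeping together with $(P5)$: one must choose the orientation and the (at most one) reclassification so that the new arc is legitimately of the form $D(A,B)\cup D(A\setminus A_0,V\setminus B)\cup D(V\setminus A,B\setminus B_0)$ \emph{and} no star vertex exceeds its prescribed $A_0$/$B_0$ degree budget — which is why the arc should be attached to $\rest$ rather than to $A_0\cup B_0$ whenever possible, and why the reclassification of a fully-dominating $A_0$-vertex into $A_D$ is the escape valve when $A\cup B=V$.
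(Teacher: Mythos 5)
Your proposal is not a proof: it contains several false starts, explicitly reaches a dead end (``that is more than one arc, contradicting the lemma''), and the final paragraph simply restates what would need to be shown rather than showing it. The genuine gap is twofold.

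First, you never isolate the key device that makes this lemma go through: a \emph{pure reclassification step}. The paper proceeds top-down. If $A_{AD}\neq\emptyset$ and its top vertex $v_1$ already dominates all of $A_0$, it can be moved into $A_D$ \emph{without directing any arc}, and the lemma is reapplied; otherwise some pair $\{v_1,y\}$ with $y\in A_0$ is undirected, and one directs $(v_1,y)$. This respects $(P6)$ because $v_1\in A\setminus A_0$ and $y\in V\setminus B$, and it respects $(P5.1)$ because $v_1\in A_{AD}$, not $A_S$, so no $A_0$-degree bound applies. The analogous step for $A_S$ pairs $v_1$ with $\rest$ (not $A_0$, precisely so $(P5.1)$ is untouched), with fall-back reclassification $A_{AD}\mathrel{:=}\{v_1\}$. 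Iterating, one eventually empties $A_{AD}$, $A_S$ and reduces $A_0$, and dually $B_{AD}$, $B_S$, $B_0$, down to $A=A_D$, $B=B_D$. Only then must the undirected pair lie inside $\rest$, and it is absorbed by promoting one endpoint $x$ to $A_S$, which is legitimate because $(V\setminus B,B)$ is already a $UDB$ so $x$ dominates $B$ for free.

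Second, where you do attempt a concrete move, the bookkeeping is wrong. For $\{x,y\}\subseteq\rest$, you propose ``put $x$ into $A_0$,'' but then the new arc $(x,y)$ lies in $D(A_0,\rest)$, which violates $(P6)$ no matter how you orient it; the correct move (available only after the reduction above) is to promote $x$ into $A_S$, so that $(x,y)\in D(A\setminus A_0,V\setminus B)$. Likewise ``reclassifying at most one vertex from $\rest$ into $A_0$ or $B_0$'' never fixes $(P6)$ for a pair inside $\rest$. Your intended catch-all — move a $\rest$-vertex into $A_{AD}$ when it is dominated by all of $A_{AD}\cup A_S$ — you yourself observe requires directing $\size{V\setminus A}$ arcs and hence cannot be used. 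Without the recursive reduction to $A=A_D$, $B=B_D$, there is no way around this, and the proposal as written does not close the argument.
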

The choice of the families $\{f_1,\ldots,f_t\}$ in Lemma \ref{lem:BaseI} and \ref{lem:BaseII}
strongly depends on the edge $e$ which represents the arc chosen by OMaker. 
We postpone the proofs of 
Lemmas \ref{lem:BaseI}, \ref{lem:AddEdgesI}, \ref{lem:Transition}, \ref{lem:BaseII} and \ref{lem:AddEdgesII} 
to later sections in order to keep
the strategy description rather short.
We prove Lemma \ref{lem:BaseI} and Lemma \ref{lem:AddEdgesI} in Section~\ref{sec:StrictStageI}; 
Lemma \ref{lem:Transition}, Lemma \ref{lem:BaseII} and Lemma \ref{lem:AddEdgesII}
are proved in Section \ref{sec:StrictStageII}.

\subsection{OBreaker's strategy for the strict rules}
\label{sec:OrientedCycleMain2}

In the following we descibe OBreaker's strategy for the strict rules. Given that Lemmas \ref{lem:BaseI}, \ref{lem:AddEdgesI}, \ref{lem:Transition}, \ref{lem:BaseII} and \ref{lem:AddEdgesII} hold, we also show that this strategy constitutes a winning strategy. 
Whenever OBreaker cannot follow the proposed strategy he forfeits the game. 

{\large {\bf Stage I}} lasts exactly $\left\lfloor n/25 \right\rfloor$ rounds. OBreaker always ensures that right after his move the digraph $D$ of all directed edges is riskless. 

For some $0\leq r\leq n/25-1$, after round $r$, assume $D$ is riskless of rank $r$ with UDB $(A,B)$. 
Let $e=(v,w)$ be the arc OMaker directs in round $r+1$. 

Suppose first that $e\in \cA(D(V\setminus B))$. OBreaker then 
follows the {\em base strategy} and chooses 
$t\leq b$ arcs given by Lemma \ref{lem:BaseI}. He then 
chooses $b-t$ further arcs according to Lemma \ref{lem:AddEdgesI}. 
That is, in total OBreaker chooses a set $\cF$ of exactly $b$ available arcs
such that $D \cup \{e\} \cup \cF$ is riskless of rank $r+1$.

Assume then that $e=(v,w)\not\in \cA(D(V\setminus B))$ and thus
$e\in \cA(D(V\setminus A))$, since $(A,B)$ forms a UDB. 
By Observation \ref{dual}, $\Dback$ is also riskless of rank $r$ with UDB $(B,A)$. 
Applying Lemma \ref{lem:BaseI} and Lemma \ref{lem:AddEdgesI}
OBreaker then finds a set $\cF$ of exactly $b$ available arcs such that
$\Dback \cup \{\eback\} \cup \cF$ is riskless of rank $r+1$.
OBreaker then chooses the $b$ arcs $\back{\cF}$.

{\bf Stage II} starts in round $\left\lfloor n/25\right\rfloor+1$. OBreaker now ensures
that right after his move the digraph $D$ is protected.

For some $r \geq \left\lfloor n/25 \right\rfloor$, after round $r$, assume $D$ is protected with UDB $(A,B)$.  Let $e=(v,w)$ be the arc OMaker directs in round $r+1$. 

Again, suppose first that $e\in \cA(D(V\setminus B))$. 
OBreaker then follows the {\em base strategy} and chooses 
$t\leq b$ arcs given by Lemma \ref{lem:BaseII}. Afterwards, he
chooses $b-t$ further arcs (unless there are fewer than $b-t$ available edges 
whence he directs all remaining edges), by applying Lemma \ref{lem:AddEdgesII} iteratively.
So, in total OBreaker chooses a set $\cF$ of exactly $b$ 
(or at most $b$ in the last round of the game)
available arcs such that $D \cup \{e\} \cup \cF$ is protected.

Assume then that $e=(v,w)\not\in \cA(D(V\setminus B))$ and thus
$e\in \cA(D(V\setminus A))$, since $(A,B)$ forms a UDB. 
By Observation \ref{dual2}, $\Dback$ is also protected with UDB $(B,A)$. 
Applying Lemma \ref{lem:BaseII} and Lemma \ref{lem:AddEdgesII}
OBreaker then finds a set $\cF$ of exactly $b$ (or at most $b$ in the last round of the game) available arcs such that $\Dback \cup \{\eback\} \cup \cF$ is protected.
OBreaker then chooses the arcs $\back{\cF}$.

\medskip

We finish this section with the proof that the proposed strategy constitutes a winning strategy. 
The empty graph is riskless of rank 0. Assume now that, for $0\leq r \leq \left\lfloor n/25\right\rfloor$, 
the digraph $D$ is riskless of rank $r$ after round $r$. Let $e$ be the arc OMaker directs in round $r+1$. 
Then, by Lemma \ref{lem:BaseI} and Lemma \ref{lem:AddEdgesI}, 
OBreaker can direct exactly $b$ edges $\cF$ such that the digraph $D\cup\cF\cup\{e\}$ is riskless of 
rank $r+1$. In case OMaker's edge satisfies $e=(v,w)\not\in \cA(D(V\setminus B))$, just notice that
$D \cup \{e\} \cup \back{\cF}$ is riskless  after OBreaker's move, by the choice of $\cF$
and by Observation~\ref{dual}. 
Furthermore, after round $r= \left\lfloor n/25\right\rfloor$, 
$D$ is protected by Lemma~\ref{lem:Transition}. 
Then, in Stage~II, Lemma~\ref{lem:BaseII} and Lemma \ref{lem:AddEdgesII} guarantee analogously that the 
digraph $D$ is protected after each move of OBreaker and thus OBreaker always can follow that part of his strategy as well. 
To finish the discussion, it therefore remains to show that a protected digraph is always acyclic, and thus OMaker could not have closed a cycle in her move.

\begin{lemma}
If $D$ is a protected digraph, then $D$ is acyclic.
\end{lemma}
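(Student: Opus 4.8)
The plan is to show that a directed cycle cannot exist in a protected digraph $D$ by exploiting the global $UDB$ structure together with the local transitivity conditions, exactly in the spirit of the argument already given for the monotone rules. Suppose for contradiction that $C$ is a shortest directed cycle in $D$, with $UDB$ $(A,B)$ and partitions $A=A_D\cup A_{AD}\cup A_S\cup A_0$, $B=B_D\cup B_{AD}\cup B_S\cup B_0$. First I would observe, using $(P6)$, that every arc of $D$ starts in $A\setminus A_0$ or ends in $B\setminus B_0$ (arcs of $D(A,B)$ do both). Consequently, since $(A,B)$ is a $UDB$, no arc of $D$ goes from $B$ to $A$; so $C$ lies entirely inside $A$ or entirely inside $B$. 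By Observation \ref{dual2} it suffices to rule out a cycle inside $A$.

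Next I would use $(P2)$ and $(P3)$ to peel off the ``dead'' and ``almost-dead'' parts. By $(P2)$, $(A_D,V\setminus A_D)$ is a $UDB$, so no arc enters $A_D$ from outside $A_D$; since $D(A_D)$ is a transitive tournament, $C$ cannot use any vertex of $A_D$ unless $C\subseteq A_D$, which is impossible as transitive tournaments are acyclic. Similarly, by $(P3)$, $(A_{AD},V\setminus A)$ is a $UDB$, so inside $A$ no arc enters $A_{AD}$ from outside $A_{AD}$; hence $C$ avoids $A_{AD}$ as well (again $D(A_{AD})\subseteq D(A_S\cup A_{AD})$ is transitive by $(P4.1)$, so acyclic). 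Therefore $C$ lies inside $A_S\cup A_0$.

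Finally I would finish on $A_S\cup A_0$. By $(P4.1)$ the tournament on $A_{AD}\cup A_S$ is transitive, so $D(A_S)$ is acyclic; hence $C$ must use at least one vertex of $A_0$. Pick a vertex $z\in A_0$ on $C$. Its in-neighbour on $C$ must be a vertex $u$ with $(u,z)\in D$; by $(P6)$ (since $z\in A_0$, no arc of $D(A\setminus A_0,V\setminus B)$ ends at $z$, and arcs of $D(A,B)$ end in $B$), any arc of $D$ into $z$ coming from within $A$ must originate in $A_S$ — more precisely $u=v_i$ for some $i\le k_1+k_2$, and since $u\in A$ it lies in $A_{AD}\cup A_S$, but $A_{AD}$ is already excluded, so $u=v_{k_1+i}\in A_S$. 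The out-neighbour of $z$ on $C$, call it $z'$, satisfies $(z,z')\in D$; by $(P6)$ an arc leaving $z\in A_0$ must lie in $D(A,B)$, so $z'\in B$ — contradicting that $C\subseteq A$. (Equivalently: $A_0$ has out-degree zero into $V\setminus B$, so an $A_0$-vertex on a cycle contained in $A$ is impossible.) This contradiction shows $D$ is acyclic. The only mildly delicate point, and the step I expect to take the most care, is the bookkeeping with $(P6)$ to verify which arcs can enter or leave the buffer set $A_0$ and the almost-dead set $A_{AD}$; once those adjacency restrictions are pinned down the rest is immediate from transitivity of the relevant tournaments.
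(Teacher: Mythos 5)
Your proof is correct in its essential structure and follows the same strategy as the paper: use $(P6)$ together with the $UDB$ to localize the cycle to $A$ or to $B$, then exclude $A_0$ (resp.\ $B_0$) and invoke transitivity of the remainder. The paper is slightly more direct: once the cycle is confined to $A$ and seen to avoid $A_0$, it simply observes that $A\setminus A_0$ induces a transitive tournament by $(P2)$ and $(P4.1)$, rather than peeling off $A_D$ and $A_{AD}$ one at a time; but both routes work.

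Two small imprecisions are worth flagging. First, your opening reading of $(P6)$ --- ``every arc of $D$ starts in $A\setminus A_0$ or ends in $B\setminus B_0$ (arcs of $D(A,B)$ do both)'' --- is false: an arc from $A_0$ to $B_0$ lies in $D(A,B)$ but satisfies neither clause. The correct consequence of $(P6)$ is the weaker (and sufficient) statement that every arc starts in $A$ or ends in $B$, which is what the paper uses. Second, the deduction ``no arc from $B$ to $A$, hence $C$ lies entirely in $A$ or in $B$'' is incomplete as stated, since a priori $C$ could pass through $\rest$; one still needs to check (again via $(P6)$ and the disjointness of $A,B$) that a vertex of $\rest$ on $C$ would force the cycle to leave $B$ at some point, which $(P6)$ forbids. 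Once those bookkeeping details are supplied, your argument is sound.
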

\begin{proof}
Let $D$ be a protected digraph with UDB $(A,B)$,  
and suppose there is a directed cycle $C$ in $D$. 
By Property $(P6)$, for each $(v,w)\in D$, we have
$v\in A$ or $w\in B$. Therefore, the underlying edges of $C$
either only contain vertices from $A$ or
only contain vertices from $B$. By Observation \ref{dual2},
we may assume without loss of generality that $C\subseteq D(A)$.
Again by Property $(P6)$, $C$ must use only vertices from $A\setminus A_0$.
However, by Properties $(P2)$ and $(P4.1)$, $A\setminus A_0$ induces a transitive tournament,
and thus does not contain a directed cycle, a contradiction.
\end{proof}

\section{Strict rules - Stage I}\label{sec:StrictStageI}

In the following we prove Lemma \ref{lem:BaseI} and Lemma \ref{lem:AddEdgesI}.
First, let us pin down a proposition which we shall use frequently. 

\begin{proposition}\label{obs:trivial}
Let $n$ be large enough, and let $b\leq n-3$ 
and $r\leq n/25$. Let 
$D$ be a riskless digraph of rank $r$, with underlying UDB $(A,B)$, such that $|D|\leq r(b+1)$.
Then 
\begin{enumerate}[$(i)$]
\item \label{eq:sizeA} $|A|, |B| < n/5+1$ and $\size{\rest}> 3n/5-1$.
\item \label{eq:sizeGood} $\size{X_A}, \size{Y_B} > 2n/5-2$ where $X_A = \{z\in \rest: e_D(A,z)= 0\}$ 
	and $Y_B = \{z\in \rest: e_D(z,B)= 0\}$.
\end{enumerate}
\end{proposition}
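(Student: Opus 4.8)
The plan is to deduce everything from counting edges against the bound $|D| \leq r(b+1)$ together with the structural property $(R4)$, which forces every arc of $D$ to start in $A$ or end in $B$. First I would establish part \eqref{eq:sizeA}. Observe that since $(A,B)$ is a $UDB$, the arcs $D(A,B)$ alone number $|A|\cdot|B|$, and these are contained in $D$, so $|A|\cdot|B| \leq r(b+1) \leq \nicefrac{n}{25}\cdot(n-2) < \nicefrac{n^2}{25}$. Combined with the size balance $\bigl||A|-|B|\bigr|\leq 1$ from $(R1)$, this gives $|A|,|B| < \nicefrac{n}{5}+1$ for $n$ large (if both were at least $\nicefrac{n}{5}+1$, their product would exceed $\nicefrac{n^2}{25}$, using that they differ by at most $1$ to handle the near-equal case cleanly). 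The bound on $\size{\rest} = n - |A| - |B|$ then follows immediately: $\size{\rest} > n - 2(\nicefrac{n}{5}+1) = \nicefrac{3n}{5} - 2$; a slightly more careful estimate (or absorbing the additive constants into "large enough $n$", or noting $|A|,|B|$ are integers so in fact $|A|,|B| \leq \lceil \nicefrac{n}{5}\rceil$) yields $\size{\rest} > \nicefrac{3n}{5}-1$ as stated.

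Next, for part \eqref{eq:sizeGood}, I would bound the number of vertices $z \in \rest$ that fail the respective condition. A vertex $z\in\rest$ lies outside $X_A$ exactly when $e_D(A,z)\geq 1$, i.e. when some arc of $D$ points from $A$ into $z$. Each such arc is one of the arcs of $D$ not already accounted for by $D(A,B)$. By $(R4)$, the arcs of $D$ not inside $D(A,B)$ are exactly those in $D(A_S,V\setminus B) \cup D(V\setminus A, B_S)$; the arcs landing in $\rest\subseteq V\setminus B$ from $A$ must come from $D(A_S,V\setminus B)$, hence from $A_S$, and by $(R3.1)$ we have $e_D(v_i,\rest)\leq \max(|A|,|B|)$ for each of the $r$ vertices $v_i\in A_S$. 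Therefore the number of arcs from $A$ into $\rest$ is at most $r\cdot\max(|A|,|B|) < \nicefrac{n}{25}\cdot(\nicefrac{n}{5}+1)$, and each such arc kills at most one vertex of $\rest$, so $|\rest\setminus X_A| < \nicefrac{n}{25}\cdot(\nicefrac{n}{5}+1) < \nicefrac{n^2}{120}$ for large $n$. Hence $|X_A| \geq |\rest| - \nicefrac{n^2}{120} > \nicefrac{3n}{5} - 1 - \nicefrac{n^2}{120}$ — wait, this is quadratic and too weak; I must instead use the global edge bound rather than $(R3.1)$.

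So the correct route for \eqref{eq:sizeGood}: use $|D| \leq r(b+1)$ directly. We have $|D| \geq |D(A,B)| + |D(A_S,\rest)| + |D(\rest,B_S)| = |A||B| + e_D(A_S,\rest) + e_D(\rest,B_S)$, using $(R4)$ and disjointness. Since $|A||B| \geq (|B|-1)|B|$ and in fact, more usefully, $|A|,|B|$ are close to each other, a cleaner bound is hard; instead I would argue: $e_D(A,\rest) = e_D(A_S,\rest) \leq r(b+1) - |A||B|$. Now $r(b+1) \leq \nicefrac{n}{25}(n-2)$ while $|A||B|$ could be as small as, say, $(\nicefrac{n}{25})^2$ if the $UDB$ is small — that doesn't help either. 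The honest argument: each vertex of $\rest$ that is not in $X_A$ absorbs at least one arc of $D(A_S,\rest)$, and $A_S$ has only $r\leq \nicefrac{n}{25}$ vertices; since the game has run for $r$ rounds, $D$ has exactly (or at most) $r(b+1)$ arcs total, of which at least $|A||B|\geq$ (a large quantity) sit in $D(A,B)$, leaving at most $r(b+1) - |A||B|$ arcs elsewhere. The key realization is that $|A|, |B|$ are each roughly $r$ (each round adds a bounded number of vertices to $A$ and $B$), so $|A||B| \approx r^2$, the leftover budget is $r(b+1) - r^2 \approx rb \approx \nicefrac{n}{25}\cdot n$, split among $\rest$, killing at most $\nicefrac{n^2}{25}$ vertices — still quadratic. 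The main obstacle, then, is that the naive count is too lossy; the resolution must exploit that $|A|$ and $|B|$ grow in lock-step with $r$ via $(R1)$-type bookkeeping (so $|A|\approx |B|\approx$ linear in $r$ only if a bounded number of vertices enter per round — in Stage I two enter $A$ and one enters $B$ per round, so $|A|,|B| = \Theta(r)$), making $r(b+1) - |A||B| = r(b+1) - \Theta(r^2)$, and since $b\geq \nicefrac{19n}{20}$ dwarfs $r = O(n/25)$, this is $\approx rb \leq \nicefrac{n}{25}\cdot\nicefrac{n}{20}$ — and dividing by the number of vertices this spreads over does not immediately give a linear bound. I expect the actual proof circumvents this by a sharper structural observation (e.g. that most arcs from $A_S$ to $\rest$ are "wasted" in a controlled way, or that $(R3.1)$'s bound $e_D(v_i,\rest) \leq \max(|A|,|B|) = O(n/25)$ combined with only $r = O(n/25)$ vertices $v_i$ actually does give $|\rest \setminus X_A| = O(n^2/625)$, which is $o(n)$-incompatible — so in fact I suspect $X_A$ is bounded below by $|\rest|$ minus something genuinely linear because the edge bound $|D| = r(b+1)$ with $b \approx n$ forces $e_D(A_S,\rest) \leq r(b+1) - |A|^2 + O(r)$ and one shows this is at most, say, $\nicefrac{n}{5}$). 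I would therefore carry out the count carefully tracking that in Stage I exactly $b+1$ arcs are added per round and at most $3$ vertices enter $A\cup B$, so that $|A| + |B| \leq 3r + 1$ and $|A||B| \geq$ the product is actually small — meaning the leftover arc budget is essentially all of $r(b+1)$, all of which can land in $\rest$. The step I expect to be the true obstacle is reconciling this: the paper must be using that these $\leq r(b+1)$ arcs are distributed so that only $O(n)$ of the $> \nicefrac{3n}{5}$ vertices in $\rest$ receive an arc from $A$, which can only follow from $(R3.1)$ giving per-star bounds summing to $\sum_{i=1}^r (r+1-i) = O(r^2) = O(n^2/625)$ for $A_0$-arcs but $\max(|A|,|B|)\cdot r = O(n^2/625)$ for $\rest$-arcs — so $|\rest\setminus X_A| = O(n^2/625) < \nicefrac{n}{5}$ is FALSE for large $n$. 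Hence I conclude the genuine argument must instead bound $e_D(A, \rest)$ by the total unused budget $r(b+1) - |A||B| - e_D(\rest, B_S)$ and show, using $|A|,|B| \ge$ some linear lower bound forced by the rounds (each round adds vertices, so $|A|,|B| \geq r$ after round $r$... no, after round $r$ we've added $\geq$ one vertex per round so $|B|\geq r$, hence $|A|\geq r-1$), that $|A||B| \geq (r-1)r$ and so $e_D(A,\rest) \leq r(b+1) - r(r-1) - 0$. With $b \le n-3$ and $r \le n/25$: this is $\le \frac{n}{25}(n-2) - \frac{n}{25}(\frac{n}{25}-1)$, still $\Theta(n^2)$. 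I therefore expect that the intended proof actually uses a \emph{different} lower bound on $|A||B|$, namely that \emph{both} $|A|$ and $|B|$ are close to $3r$ or at least linear with the right constant, OR — most likely — that the relevant count is not "arcs from $A$ to $\rest$" but rather uses that $D(A_S, \rest)$ arcs, being governed by downsets $(R2.2)$, means $z \notin X_A$ iff $(v_1,z)\in D$, so $|\rest\setminus X_A| \leq e_D(v_1,\rest) \leq \max(|A|,|B|) < \nicefrac{n}{5}+1$ by $(R3.1)$! That is the clean argument: by $(R2.2)$, $\{i:(v_i,z)\in D\}$ is a downset, so it is nonempty iff it contains $1$, i.e. $z\notin X_A \iff (v_1,z)\in D$; hence $|\rest \setminus X_A| \leq e_D(v_1, \rest) \leq \max(|A|,|B|) < \nicefrac{n}{5}+1$, giving $|X_A| > |\rest| - \nicefrac{n}{5} - 1 > \nicefrac{3n}{5} - 1 - \nicefrac{n}{5} - 1 = \nicefrac{2n}{5} - 2$, as required; the dual argument via $(R2.3)$ and $(R3.2)$ handles $Y_B$. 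The main obstacle was spotting that the downset structure collapses the count to a single star, and I would lead with that once found.
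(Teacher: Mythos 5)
Your final argument is correct and matches the paper's: part (i) follows from $|A|\cdot|B|\le |D|\le r(b+1)<\nicefrac{n^2}{25}$ together with $(R1)$, and for part (ii) the decisive step is exactly the one you identified at the end --- by $(R2.2)$ the set $\{i:(v_i,z)\in D\}$ is a down set, so $z\notin X_A$ iff $(v_1,z)\in D$, whence $|\rest\setminus X_A|\le e_D(v_1,\rest)\le\max(|A|,|B|)<\nicefrac{n}{5}+1$ by $(R3.1)$, and dually for $Y_B$. The long detour through global edge-counting was unnecessary (and, as you noticed, hopeless, since it only gives a quadratic bound), but the clean down-set observation you eventually landed on is precisely the paper's proof.
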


\begin{proof}
Note first that since $(A,B)$ is a UDB 
$|A|\cdot |B| \leq |D|\leq r(b+1)< n^2/25$, 
by assumption on $r$ and $b$. 
Since $\size{|A|- |B|}\leq 1$ by Property $(R1)$, it follows that 
$\max\{|A|,|B|\} < n/5+1$, and that 
$|A|+|B|< 2n/5+1$. 
Therefore, $\size{\rest}> 3n/5-1$. 
Let $\widebar{X_A}:= \{z\in \rest: e_D(A,z)> 0\}$. 
Then, by Properties $(R4)$ and $(R2.2)$, 
\begin{align*}
\widebar{X_A} &=\left\{ z\in \rest : (x,z)\in D \text{ for some } x\in A \right\}\\
	&= \left\{ z\in \rest : (x,z)\in D \text{ for some } x\in A_S \right\}\\
	&=\left\{ z\in \rest : (v_1,z)\in D \right\}.
\end{align*}
So, by Property $(R3.1)$ and \eqref{eq:sizeA},
$|\widebar{X_A}| \leq \max\{|A|,|B|\}<n/5+1,$ 
and hence, 
$$\size{X_A}=\size{\rest}-|\widebar{X_A}|>\frac{2n}{5}-2.$$
Similarly,  $\size{Y_B}>2n/5-2$.
\end{proof}

Now, we prove Lemma \ref{lem:BaseI}.
It ensures that OBreaker has a strategy to reestablish the properties
of a riskless graph throughout Stage I.

\begin{proof}[Proof of Lemma \ref{lem:BaseI}] 
For $e\in \cA(D(V\setminus B))$ given by the lemma, let $v=e^+$ and $w=e^-$.
At first, let us fix distinct vertices $x_A, y_B\in V\setminus (A\cup B\cup \{v,w\})$ 
with $e_D(A,x_A)=0$ and $e_D(y_B,B)=0$. Note that Proposition \ref{obs:trivial}
guarantees their existence since $r\leq n/25$ and $b\leq n-3$. 
Define $u_S\in A_0\cup \rest$ and $u_A\in \rest$ by 
\begin{align*} 
u_S :=
\begin{cases}
x_A 	& \text{if }v\in A_S\\
v 	& \text{if }v \notin A_S
\end{cases}
\qquad\text{and}\qquad
u_A :=
\begin{cases}
x_A 	& \text{if }v\in A\\
v 	& \text{if }v \notin A  .
\end{cases}
\end{align*}
Our goal is to add $u_S$ to the set $A_S$ of star centers, $u_A$ to $A$ and $y_B$ to $B$.
Note that the two vertices $u_s$ and $u_A$ are equal unless $v\in A_0$. Moreover, let
\begin{align*}
\ell :=
\begin{cases}
\min \{i:\ (v_i,v)\notin D\} & \text {if minimum exists}\\
r+1 & \text{otherwise}
\end{cases}
\end{align*}
and observe that $v_{\ell}=v$ if $v\in A_S$ 
(by Property $(R2.1)$). Set
\begin{align*} 
v_i' :=
\begin{cases}
v_i 	& 1\leq i\leq \ell -1\\
u_S 	& i=\ell\\
v_{i-1}	& \ell +1\leq i\leq r+1\\
\end{cases}
\qquad\text{and}\qquad 
w_i' :=
\begin{cases}
y_B 	& i=1\\
w_{i-1} 	& 2\leq i\leq r+1.\\
\end{cases}
\end{align*}
These vertices are used to form the new centers of the stars in $A_S$ and $B_S$. Now, 
choose $\{f_1,\ldots,f_t\}$ to be $\{f_1,\ldots,f_t\} = (\cF_1\cup\cdots\cup \cF_7)\cap \cA(D)$, where 
\begin{align*}
	\cF_1 &:= \big\{(u_A,y) : y \in B\big\}\\
	\cF_2 &:= \big\{(x,w_1') : x\in A\cup \{u_A\}\big\}\\
	\cF_3 &:= \big\{(w_1',w_i') : 2\leq i\leq r+1\big\}\\
	\cF_4 &:= \big\{(v_i',w) : 1\leq i\leq \ell \big\}\\
	\cF_5 &:= \big\{(v_i',v_{\ell}') : 1\leq i\leq \ell -1  \big\}\\
	\cF_6 &:= \big\{(v_{\ell }',v_i') : \ell+1 \leq i\leq r+1 \big\}\\
	\cF_7 &:= 
	\big\{(v_{\ell}',z) : z\in\rest\cup A_0 \text{ and }(v_{\ell +1}',z)\in D \big\}, 
\end{align*}
where we use the convention that $\cF_7=\emptyset$ if $\ell = r+1$ 
(and thus $v_{\ell +1}'$ does not exist). 
The illustration of these arc sets can be found in Figure \ref{fig10}.

 \begin{figure}[b]
 \centering
 \begin{subfigure}{0.55\textwidth}
	\centering
	 \includegraphics[scale=0.6]
	 {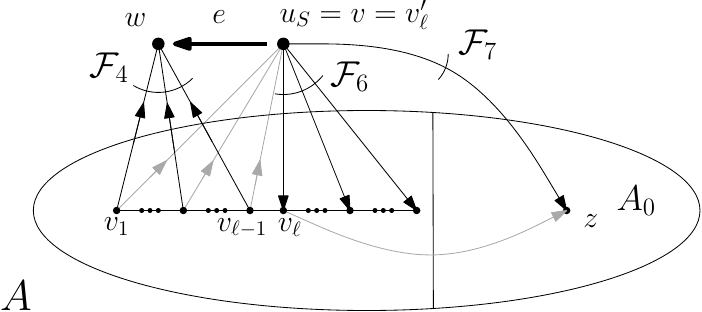}
\end{subfigure} 
\quad
 \begin{subfigure}{0.4\textwidth}
 	\centering
	\includegraphics[scale=0.7]
	{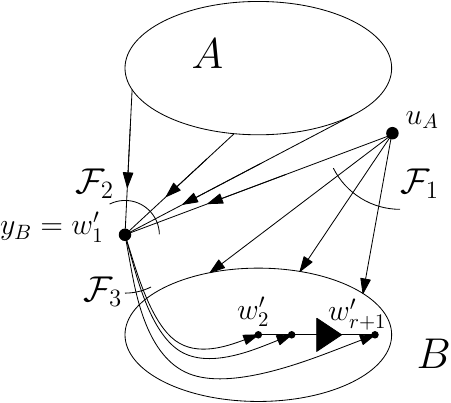}
 \end{subfigure}
 \caption{The case $v\in \rest$. $\cF_4, \cF_6, \cF_7$ on the left include vertex $v$ into the tournament in $A$ and restore $(R2.2)$. 
 $\cF_1, \cF_2, \cF_3$ on the right restore the UDB-property and $(R2.1)$, 
 after $u_A$ and $y_B$ are added to $A$ and the tournament in $B$ respectively.}
 \label{fig10}
 \end{figure}

To show that this choice of arcs is suitable for the lemma, 
we first show that $f_i\neq \back{e}$ for all $1\leq i\leq t$, 
that $\cF_1\cup\cdots\cup\cF_7 \subseteq D\cup \cA(D)$, 
and that $t\leq b$. 
Note that $\cF_i \subseteq D\cup \cA(D)$ implies that  $\cF_i\subseteq D'$, where $D'=D\cup \{e,f_1,\ldots,f_t\}$. 
We use this information to show that $D'$ is riskless of rank $r+1$
with some UDB $(A',B')$. Finally, we show that $e_{D'}(\restStrich,w_1')=0$, 
where $w_1'$ is the source of the tournament in $B'$, given by Property $(R2.1)$.

For the first part, $\eback =(w,v)\notin \cF_1$ since by assumption 
$v \notin B$, $\eback\notin \cF_2\cup \cF_3$ by choice of $w_1'=y_B$, 
and $\eback\notin \cF_4$ since $v\neq w$.
Assume now that $(w,v)=(v_i',v_{\ell}')\in \cF_5$ for some $1\leq i\leq \ell -1$.
Then, $(w,v)=(v_i,v)\in D$ by definition of $\ell$, 
a contradiction to $e=(v,w)\in \cA(D(V\setminus B))$. 
So, $\eback \notin \cF_5$. 
For $\cF_6$, assume that $(w,v)=(v_{\ell }',v_i')\in \cF_6$ for some $\ell+1 \leq i\leq r+1$.    
Then $v\neq v_\ell'$, so $v_\ell' = u_S=x_A\neq w$ by definition of $x_A$,
a contradiction. 
So, $\eback \notin \cF_6$. 
Finally, if $(w,v)\in \cF_7$ then $v\in\rest\cup A_0$, so 
$w=v_\ell'=u_S=v$ by definition of $u_S$, 
a contradiction since $(v,w)\in \cA(D)$ and thus, it is not a loop.   
Hence, $\eback \notin \cF_7$, as well.

To see that $\cF_1\subseteq D\cup \cA(D)$ 
note that $(y,u_A)\notin D$ for all $y\in B$, since 
$u_A\in \rest$ and by Property $(R4)$. 
Similarly, $\cF_2\cup \cF_3\subseteq D\cup \cA(D)$ 
since $w_1'=y_B\in \rest$ and by Property $(R4)$. 
Assume now that $(w,v_i')\in D$ for some $1\leq i\leq \ell$.
Then $w=v_j\in A_S$ for some $1\leq j\leq r$, by Property $(R4)$.
Moreover, $v_i'\neq v$, since $e=(v,w)\in \cA(D(V\setminus B))$,
and $v_i'\neq x_A$ by choice of $x_A$ and Property $(R4)$.
This yields $v_i'\neq u_S$ and therefore $i\neq \ell $. But then $v_i'=v_i$, and $j<i\leq \ell -1$ by Property $(R2.1)$.
By definition of $\ell$ we conclude $(w,v)=(v_j,v)\in D$,
in contradiction to $e\in \cA(D(V\setminus B)).$ 
Thus, $\cF_4 \subseteq D\cup \cA(D)$.
For $\cF_5$, we note that $(v_\ell',v_i')=(u_S,v_i)\notin D$ for all $1\leq i<\ell$  
by Property $(R4)$ since $u_S\notin A_S$ and $v_i\notin B$.
Now, assume that $(v_i',v_{\ell}')=(v_{i-1},u_S)\in D$ for some $\ell+1\leq i\leq r+1$.
Then $u_S\neq x_A$ by the choice of $x_A$. It follows $(v_{i-1},v)=(v_{i-1},u_S)\in D$. 
But then, by definition of $\ell$ and $(R2.2)$, we obtain 
$i-1<\ell$, a contradiction. Thus $\cF_6 \subseteq D\cup \cA(D)$.
Finally, $\cF_7\subseteq D\cup \cA(D)$ since for all $z\in\rest\cup A_0$ 
we have that $(z,v_\ell')\notin D$, by Property $(R4)$ and since $v_\ell'=u_S\notin B$. 

To bound $t$ by the bias $b$ we note that
\begin{align*}
t\leq \sum_{i=1}^7 |\cF_i| 
	& \leq |B| + (|A|+1) + r + (2\ell - 1) + (r-\ell +1) + e_D(v_{\ell +1}',\rest\cup A_0)\\
	& = |A| + |B| + 2r + \ell + 1 + e_D(v_{\ell},\rest) + e_D(v_{\ell},A_0)\\
	& \leq |A| + |B| + 2r + \ell + 1 + \max\{\size{A},\size{B}\} + (r+1-\ell)\\
	& \leq 3 \max\{\size{A},\size{B}\} + 3r + 2
\end{align*}
where the third inequality follows from Property $(R3.1)$. 

Now, by Proposition \ref{obs:trivial} \eqref{eq:sizeA} and since $r\leq n/25-1$, 
it follows that for $n>8$,\\
$$ t\leq \frac{3n}{5}+\frac{3n}{25}+2 \leq \frac{19n}{20} \leq b.$$

We now show that $D'=D\cup \{e,f_1,\ldots, f_t\}$ is a riskless digraph of rank $r+1$. 

Set 
$A_S':=\{v_1',\ldots,v_{r+1}'\}=A_S\cup \{u_S\}$, 
$A_0':=(A_0\cup \{u_A\})\setminus \{u_S\}$ and 
$A':=A\cup \{u_A\}$,
and 
$B_S':=\{w_1',\ldots,w_{r+1}'\}=B_S\cup \{y_B\}$, 
$B_0' :=B_0$ and 
$B'=B\cup\{y_B\}.$
We claim that $(A',B')$ is a UDB in $D'$ with partitions $A'=A_0'\cup A_S'$ and 
$B'=B_0'\cup B_S'$ such that the Properties $(R1)$--$(R4)$ are satisfied for $r+1$ and 
such that $e_{D'}(\restStrich,w_1')=0$. 
Checking the properties is straight-forward and we want to guide the reader back 
to Figure \ref{fig:riskless1} and Figure \ref{fig10} before plunging into the technical details 
that follow. 

Since $(A,B)$ is a UDB in $D$ and $D\cup \cF_1\cup \cF_2\subseteq D',$
$(A',B')$ forms a UDB in $D'$. 
For Property $(R1)$, note that 
$\size{A'}=\size{A}+1$, $\size{B'}=\size{B}+1$ 
and $|A_S'|=|B_S'|=r+1$. 

For Property $(R2.1)$, note first that $A_S=A_S'\setminus \{v_{\ell}'\}$ 
induces a transitive tournament in $D\subseteq D'$. 
Furthermore, for all $i<\ell$ we have that $(v_i',v_{\ell}')\in \cF_5\subseteq D'$,
and for all $i>\ell$ we have that $(v_{\ell}',v_i')\in \cF_6\subseteq D'$.
Hence, $(v_1',\ldots,v_{r+1}')$ induces a transitive tournament in $D'$.
Furthermore, $B_S=B_S'\setminus \{w_1'\}$ induces a transitive tournament in $D\subseteq D'$, 
and for all $2\leq i\leq r+1$, we have that $(w_1',w_i')\in \cF_3\subseteq D'$. 
Hence, $(w_1',\ldots,w_{r+1}')$ induces a transitive tournament in $D'$.

For $(R2.2)$, let $z\in A_0'\cup\restStrich$. 
Then note that only arcs from $D\cup \{e\} \cup \cF_4 \cup \cF_7$ 
contribute to the set $\{i: (v_i',z)\in D'\}$.  
Moreover, $D(v_{\ell}',A_0'\cup\restStrich)=\emptyset$
by Property $(R4)$ and since $v_{\ell}'\in A_0\cup \rest$.
Note also that $\{i: (v_i,z)\in D\}$ is a down set of $[r]$ and the relative order of the $v_i'$ for $i\neq \ell$ 
did not change. 
If $z\neq w$, then the arcs from $\cF_7$ reestablish the down-set property for $D'$.  
If $z=w$, then the arcs from $\cF_4\cup \{e\}$ reestablish the down-set property for $D'$. 

For $(R2.3)$, let $z\in B_0'\cup \restStrich$. 
Then note that only arcs from $D$ 
contribute to the set $\{i: (z,w'_i)\in D'\}$.  
Moreover, $\{i: (z,w_i)\in D\}$ is an upset of $[r]$ by assumption. 
Further, note that $w_{i+1}'=w_i$ for every $1\leq i\leq r$,
and that $(z,w_1')=(z,y_B)\not\in D$ by Property $(R4)$ and since $y_B\in\rest$.
Thus, $\{i: (z,w_i')\in D'\}$ is an upset of $[r+1].$

For $(R3.1)$, let first $1\leq i \leq \ell -1$. Then only the arcs in $D(v_i,A_0)\cup\cF_4$ 
contribute to $D'(v_i',A_0')$; and only the arcs in $D(v_i,\rest)\cup\cF_4$ 
contribute to $D'(v_i',\restStrich)$. 
Therefore, $e_{D'}(v_i',A_0')\leq e_{D}(v_i,A_0) + 1\leq (r+1)+1-i$ and 
$e_{D'}(v_i',\restStrich)\leq e_{D}(v_i,\rest)+1\leq \max\{|A|,|B|\}+1=\max\{|A'|,|B'|\}$. \\
Now, let $i=\ell$. Observe that
$e_D(v_{\ell}',A_0\cup\rest)=0$ by $(R4)$ and since $v_{\ell}'\in A_0\cup \rest$.
So, only the arcs in $\cF_7\cup \{e\}$ 
contribute to $D'(v_{\ell}',A_0')$ and $D'(v_{\ell}',\restStrich)$. 
Therefore, $e_{D'}(v_{\ell}',A_0')\leq e_D(v_{\ell},A_0)+1\leq (r+1)+1-{\ell}$
and $e_{D'}(v_{\ell}',\restStrich)\leq e_{D}(v_{\ell},\rest)+1\leq \max\{|A|,|B|\}+1=\max\{|A'|,|B'|\}$.\\
Finally, let $\ell+1\leq i\leq r+1$. 
Then $D'(v_i',A_0')=D(v_{i-1},A_0)$ and 
$D'(v_i',\restStrich)\subseteq D(v_{i-1},\rest)$. 
Hence, $e_{D'}(v_i',A_0')\leq (r+1)+1-i$ and $e_{D'}(v_i',\restStrich)\leq \max\{|A'|,|B'|\}$.

For $(R3.2)$, first let $2\leq i\leq r+1$. Then only the arcs in $D(B_0,w_{i-1})$ 
contribute to $D'(B_0',w_i')$; and only the arcs in $D(\rest,w_i')$ 
contribute to $D'(\restStrich,w_i')$. 
Therefore we have
$e_{D'}(B_0',w_i')\leq i-1$ and $e_{D'}(\restStrich,w_i')\leq \max\{\size{A},\size{B}\}\leq \max\{\size{A'},\size{B'}\}$. \\
Now, for $i=1$ we have $w_1'=y_B\in\rest$. 
Similarly, only the arcs in $D(B_0,y_B)$ 
contribute to ${D'}(B_0',w_1')$; and only the arcs in $D(\rest,y_B)$ 
contribute to ${D'}(\restStrich,w_1')$. Then by Property $(R4)$ for the digraph 
$D$ we know $e_{D'}(B_0',w_1')= e_{D}(B_0,y_B)=0$ and analogously
$e_{D'}(\restStrich,w_1')=0.$ 
 
For $(R4)$, note that it is enough to prove that
$D'\subseteq D'(A',B')\cup D'(A_S',V)\cup D'(V,B_S').$
This indeed holds, since
\begin{align*}
D(A,B)\cup \cF_1\cup \cF_2 & \subseteq D'(A',B'),\\
D(A_S,V\setminus B)\cup \cF_4\cup \cF_5\cup \cF_6\cup \cF_7 & \subseteq D'(A_S',V), \text { and}\\
D(V\setminus A,B_S)\cup \cF_3 & \subseteq D'(V,B_S').\qedhere
\end{align*}
\end{proof}

Next, we prove Lemma \ref{lem:AddEdgesI} which ensures that OBreaker
can add arcs to a riskless graph without destroying its structural properties.

\begin{proof}[Proof of Lemma \ref{lem:AddEdgesI}] 
We first provide OBreaker with a strategy to direct  
$(r+1)(b+1)- \size{D}$ available arcs, then we show that 
OBreaker can follow that strategy, and that the resulting 
digraph $D'$ is riskless of rank $r+1$.
 
Initially, set 
$t=(r+1)(b+1)- \size{D}$ and let $\cF:=\emptyset$ be the (dynamic) 
set of OBreaker's edges. We proceed iteratively: 
As long as $t\geq \max\{|A|,|B|\}$, OBreaker enlarges $A_0$ and $B_0$ (and thus $A$ and $B$) 
alternately. As soon as $t<\max\{|A|,|B|\}$, he 
fills up the stars with centres $w_i$, $1\leq i\leq r$, starting with $w_{r+1}$.

{\bf Step 1: $t\geq \max\{|A|,|B|\}$.}  
If 
$|B| = |A|-1$, then let $y_B\in \rest$ such that $e_{D\cup \cF}(y_B,B)=0$. 
For all $x\in A$, if the arc $(x,y_B)\not\in D\cup \cF$, 
OBreaker directs $(x,y_B)$, 
decreases $t$ by one and updates $\cF:=\cF \cup \{(x,y_B)\}$. 
Set $B:=B\cup \{y_B\}$, $B_0:=B_0\cup \{y_B\}$ and repeat Step 1. 

If $|B|\geq |A|$, then let $x_A\in \rest$ such that $e_{D\cup \cF}(A,x_A)=0$. 
For all $y\in B$, if the arc $(x_A,y)\not\in D\cup \cF$, 
OBreaker directs $(x_A,y)$, updates $\cF:=\cF \cup \{(x_A,y)\}$ 
and decreases $t$ by one. 
Set $A:=A\cup \{x_A\}$, $A_0:=A_0\cup \{x_A\}$ and repeat Step 1.

\medskip
{\bf Step 2: $t<\max\{|A|,|B|\}$.} 
If $t=0$, there is nothing to do. Otherwise, OBreaker proceeds as follows. 

If $e_{D\cup \cF}(\rest, w_{r+1})<\max\{|A|,|B|\}$, then let $z\in \rest$ such that $e_{D\cup \cF}(z,B)=0$. 
Then OBreaker directs $(z,w_{r+1})$, 
updates $\cF:=\cF \cup \{(z,w_{r+1})\}$, 
decreases $t$ by one and repeats Step 2. 

Otherwise, $e_{D\cup \cF}(\rest, w_{r+1})=\max\{|A|,|B|\}$. Then let $\ell$ be the maximal index $i\in [r]$ such that 
$e_{D\cup \cF}(\rest, w_i)<\max\{|A|,|B|\}$. Let $z\in \rest$ such that $(z,w_\ell)\not\in D\cup \cF$ and 
$(z,w_{\ell+1})\in D\cup \cF$. OBreaker directs $(z,w_\ell)$, updates $\cF:=\cF \cup \{(z,w_{\ell})\}$, 
decreases $t$ by one and 
repeats Step 2. 

\medskip

We first show that OBreaker can follow the strategy. 
First note, by Property $(R4)$ of a riskless digraph, that for all $z\in \rest$, 
for all $x\in A$ we have that $(z,x)\not\in D$, and for all $y\in B$ we have that 
$(y,z)\not\in D$. Hence, under the assumption that $x_A$ and $y_B$ in Step 1 exist, 
OBreaker can follow the proposed strategy in Step 1. 

Now, since $D$ is a riskless digraph of rank $r+1$ with $|D|\leq (r+1)(b+1)$, 
and since $r\leq n/25-1$, by Proposition \ref{obs:trivial} \eqref{eq:sizeA} 
we have that 
\begin{align*}
\size{X_A}&= \size{\{z\in \rest: e_D(A,z)= 0\}}>2n/5-2 \text{ and}\\
\size{Y_B}&= \size{\{z\in \rest: e_D(z,B)= 0\}}>2n/5-2, 
\end{align*} 
before the first update in Step 1. 
On the other hand, 
in each iteration of Step 1, 
$A$ or $B$ increases by one vertex from $\rest$ (alternatingly).
Since by $(R4)$ there are no arcs inside $\rest$,
and since in Step 1, OBreaker directs all edges between
the new vertices in $A$ and those in $B$,
the size of each of these two sets can increase by at most $\sqrt{b}\leq \sqrt{n}\leq n/100$ 
for large enough $n$. 
Since $X_A$ and $Y_B$ consist of at least $2n/5-2$ elements each before entering 
Step 1, the existence of $x_A$ and $y_B$ in each iteration of Step 1 follows. 

For Step 2, the existence of $z\in \rest$ such that $e_{D\cup \cF}(z,B)=0$ 
is guaranteed by the following: 
Consider a vertex $z$ in $Y_B$ before Step 1. 
Then $e_{D}(z,B)=0$ by definition, and $e_{D}(z,\rest)=0$ by $(R4)$. 
Now, if $z$ is not added to $A$ or $B$ during Step 1, then $e_{D}(z,B)=0$ holds still after the 
update of $B$. Since in Step 1, $\cF$ contains only arcs between $A$ and $B$, it follows, 
under the assumption that $z\in\rest$ after the update, that $e_{D\cup \cF}(z,B)=0$ before 
entering Step 2. 
Since $\size{Y_B}> 2n/5-2$ before entering Step 1, and since 
in Step 1 at most $2\sqrt{n}$ vertices are moved from $Y_B$ to $A\cup B$, 
if follows that at the beginning of Step 2, there are at least $n/4$ vertices $z$ 
in $\rest$ such that  $e_{D\cup \cF}(z,B)=0$. 
Note that in 
Step 2 
at most $\max\{|A|,|B|\}-1\leq n/5+\sqrt{n}<n/4$ 
of those vertices $z\in \rest$ with $e_{D\cup \cF}(z,B)=0$ are used. 
The existence of $\ell$ is always guaranteed since $e_D(\rest,w_1)=0$ by assumption, 
and since Step 2 is executed at most $\max\{|A|,|B|\}-1$ times. 
Note that by choice of $x\in \rest$, OBreaker can always direct $(x,w_{r+1})$ or $(x,w_{\ell})$ 
as required.

Finally, we prove that $D':=D\cup \cF$ is a riskless digraph of rank $r+1$, 
where $\cF$ is the set of all arcs that OBreaker directed in Step 1 and Step 2. 
In Step 1, the sets $A$ and $B$ are enlarged by one in each iteration. 
Since for each new element $x_A$ (or $y_B$, respectively) all arcs 
$(x_A,y)$ for $y\in B$ (or $(x,y_B)$ for $x\in A$, respectively) are directed by 
OBreaker (unless they are in $D\cup \cF$ already), the pair $(A,B)$ is a UDB in $D\cup \cF$. 
Since $A$ and $B$ are increased alternately 
(except for the first execution of Step 1 in case $|B|=|A|+1$), 
it follows that $\size{|A|-|B|}\leq 1$. 
Since $A_S$ and $B_S$ are unchanged, Property $(R1)$ follows.

Since $A_S$ and $B_S$ are untouched, there is nothing to prove for $(R2.1)$.
For $(R2.2)$, note that, after the last update of Step 2, for all $z\in A_0\cup \rest$, 
the set $\{i: (v_i,z)\in D\cup \cF\}$ is the same as $\{i: (v_i,z)\in D\}$.
Now, for all $z\in B_0\cup \rest$, the arc $(z,w_i)$ is directed by OBreaker 
for some $1\leq i<r+1$ only if $(z,w_{i+1})\in D\cup \cF$.
So $(R2.3)$ follows as well. 
For $(R3.1)$, note that in Step 1, all vertices $z$ that are added to $A_0$
fulfill $e_D(A,z)=0$. Hence for all $1\leq i \leq r+1$, $e(v_i,A_0)$ does not 
(strictly) increase
when proceeding from $D$ to $D\cup \cF$. Also, $e(v_i,\rest)$ does not (strictly) increase, since all arcs of the form $(v_i,z)$, 
that are directed by OBreaker, fulfill $z\in B$ after the update. In Step 2, only edges of the form 
$(z,w_i)$ for $z\in \rest$ are directed, hence $(R3.1)$ follows. 
For $(R3.2)$, similar to $(R3.1)$, the quantity $e(B_0,w_i)$ does not increase in Step 1, 
for all $1\leq i\leq r+1$, since all vertices $z\in \rest$ added to $B_0$ fulfill $e_D(z,B)=0$. 
In Step 2, no vertices are added to $B_0$, so the quantity $e(B_0,w_i)$ 
stays unchanged for all $1\leq i \leq r+1$. 
Now for $1\leq i\leq r+1$, the quantity $e(\rest,w_i)$ only increases in Step 2, 
and only if $e(\rest,w_i)<\max\{|A|,|B|\}$ 
by the strategy description. Therefore, $(R3.2)$ follows. 
Finally, Property $(R4)$ follows since OBreaker updates $A$ and $B$ accordingly in Step 1, 
and since in Step 2, he only directs arcs of the form $(x,w_i)$ for $x\in \rest$ and $w_i\in B_S$. 
\end{proof}

\section{Strict rules - Stage II} \label{sec:StrictStageII}

\begin{proof}[Proof of Lemma \ref{lem:Transition}]
By assumption, $D$ is a riskless digraph of rank $r$.  
Let $A=A_S\cup A_0$ and $B=B_S\cup B_0$ be given according to Definition \ref{def:riskless}.
We claim that $D$ is protected with UDB $(A,B)$ with partitions
$A= A_D\cup A_{AD}\cup A_{S}\cup A_{0}$ and $B= B_D\cup B_{AD}\cup B_{S}\cup B_{0}$, where  
$A_D=A_{AD}=B_D=B_{AD}=\emptyset$, 
and $k_1=\ell_1=0$, $k_2=\ell_2=r$.

For Property $(P1)$, 
let $a_0:=|A_0|$ and note that by Property $(R1)$, $\size{a_0-\size{B_0}}\leq 1$.
By assumption on $|D|$ and by Property $(R4)$, 
\begin{align}\label{aux110} 
r(b+1) = |D| = e_D(A,B) + e_D(A_S,V\setminus B)+ e_D(V\setminus A,B_S).
\end{align}
Now, $e_D(A,B) = (r+a_0)(r+|B_0|)\leq (r+a_0+1)^2$; whereas 
 \begin{align*}
 e_D(A_S,V\setminus B)&=e_D(A_S,A_S)+ e_D(A_S,A_0)+e_D(A_S,\rest)\\
	&\leq \binom{r}{2}+\frac{r(r+1)}{2} + r(a_0+1+r) \\
	&\leq r^2 + r(a_0+1+r)
 \end{align*} 
 where the first inequality follows from Properties $(R1)$ and $(R3.1)$. 
 Similarly, by Properties $(R1)$ and $(R3.2)$, 
 $e_D(V\setminus A, B_S)\leq r^2 + r(a_0+1+r).$ 
 Thus, \eqref{aux110} yields 
 $$r(b+1) \leq  (r+a_0+1)^2 + 4r^2 + 2r (a_0+1).$$ 
 Standard, but slightly tedious calculations show that this implies that 
 $a_0 + 1 \geq n/10+3$ for $r=\left\lfloor n/25 \right\rfloor$,  
 $b\geq 19n/20$ and $n$ large enough. 
 This then implies that $\size{B_0}\geq a_0-1\geq n/10+1$.

There is nothing to prove for Properties $(P2)$ and $(P3)$ 
since $A_{D}=A_{AD}=B_{D}=B_{AD}=\emptyset$.
For Property $(P4)$, note that $A_{AD}=B_{AD} =\emptyset$ and the enumerations  
$A_S=\{v_1,\ldots,v_r\}$ and $B_S=\{w_1,\ldots,w_r\}$ given by Property $(R2)$ 
fulfil $(P4.1)$--$(P4.3)$, with $k_1=\ell_1 = 0$ and $k_2=\ell_2=r$.
Properties $(P5.1)$ and $(P5.2)$ follow from $(R3.1)$ and $(R3.2)$, respectively. 
Finally, $(P6)$ follows from $(R4)$.
\end{proof}

\begin{proof}[Proof of Lemma \ref{lem:BaseII}]
Let $A=A_D\cup A_{AD}\cup A_S \cup A_0$ and  $B=B_D\cup B_{AD}\cup B_S \cup B_0$ 
be given by Definition \ref{def:protected}, and let 
$A_{AD}=\{v_1,\ldots,v_{k_1}\}$, $A_S=\{v_{k_1+1},\ldots,v_{k_1+k_2}\}$ and 
$B_{S}=\{w_1,\ldots,w_{\ell_2}\}$, $B_{AD}=\{w_{\ell_2+1},\ldots,w_{\ell_1+\ell_2}\}$ 
as given by Property $(P4)$.
Let $e=(v,w)\in \cA(D(V\setminus B))$ 
be given by the lemma.
For notational reasons we divide into two cases. 

{\bf Case 1: $v\in A_{AD}\cup A_S$.} Then $v= v_\ell$ for some $1\leq \ell \leq k_1+k_2$. 
Note that the only properties that may not be fulfilled anymore in $D+e$ 
are $(P4.2)$ and $(P5.1)$. 
Let $\{f_1,\ldots,f_t\} = (\cF_1\cup \cF_2 \cup \cF_3)\cap \cA(D)$, where 
\begin{align*}
\cF_1 &:= \big\{(v_i,w) : 1\leq i\leq \ell - 1\big\},\\  
\cF_2 &:= \big\{(v_1,z) : z\in A_0 \big\},\\ 
\cF_3 &:= \big\{(v_{k_1+1},z) : z\in\rest \big\}, 
\end{align*}
where we use the convention that $\cF_3=\emptyset$ if $A_S=\emptyset$. 
To show that this choice of arcs is suitable for the lemma, 
we now follow the structure of the proof of Lemma \ref{lem:BaseI}.
That is, we prove that $f_i\neq \overset{\leftarrow}{e}$ for all $1\leq i\leq t$, 
that $\cF_1\cup\cF_2\cup\cF_3 \subseteq D\cup \cA(D)$, that $t\leq b$, 
and finally we deduce that $D'=D\cup \{e,f_1,\ldots,f_t\}$ is protected.

For the first part, $\eback = (w,v) \not\in \cF_1$ since $v\neq w$, 
and $\eback = (w,v) \not\in \cF_2\cup \cF_3$ since $v\in A_{AD}\cup A_S$ by assumption. 
To see that $\cF_1\subseteq D\cup \cA(D)$, assume 
that $(w,v_i)\in D$ for some $1\leq i\leq \ell -1$.
Then $w=v_j\in A_{AD}\cup A_S$ for some $j< i <\ell$, by Properties $(P6)$,
$(P4.1)$ and since $w\notin A_D$. 
By Property $(P4.1)$ again, we conclude $(w,v)=(v_j,v_\ell)\in D$, 
a contradiction to $e\in \cA(D(V\setminus B)).$ 
Thus, $\cF_1 \subseteq D\cup \cA(D)$. 
Now, $\cF_2 \cup \cF_3 \subseteq D\cup \cA(D)$ 
since every arc of the form $(z,v_i)$ in $D$ satisfies
$z\in A\setminus A_0$, by Property $(P6)$. 
To see that $t\leq b$ note that 
$$ t\leq \size{\cF_1}+\size{\cF_2}+\size{\cF_3}
	\leq \ell +\size{A_0}+\size{\rest} 
	\leq \size{V\setminus B} 
	\leq b,$$ 
since by Property $(P1)$, $\size{B}\geq n/10+1\geq n-b$. 

To check that $D'$ is protected, consider the partition 
$A = A_D' \cup A_{AD}' \cup A_S' \cup A_0$ where 
$A_D':= A_D\cup \{v_1\}$,  
$A_{AD}':= \{v_2,\ldots , v_{k_1+1} \}$ (or $A_{AD}':= \{v_2,\ldots , v_{k_1} \}$ if $k_2=0$), and 
$A_S':=\{v_{k_1+2},\ldots,v_{k_1+k_2}\}$.
Clearly, $(A,B)$ is still a UDB in $D'$ with $|A|,|B|\geq n/10+1$, and Property $(P1)$ holds since 
$\size{A_D'\cup A_0}=\size{A_D \cup A_0}+1$. \\
For Property $(P2)$, $B_D$ did not change; and 
$D'(A_D')$ is a transitive tournament, since $D(A_D)\subseteq D'(A_D')$ 
is such, and since $(z,v_1)\in D\subseteq D'$ for every $z\in A_D$ by the
UDB-property for $A_D$. 
To see that $(A_D',V\setminus A_D')$ forms a UDB we need to observe that
$(v_1,z)\in D'$ for every $z\in V\setminus A_D'$. 
If $v_1\in A_{AD}$, then this follows by Properties $(P3)$ and $(P4.1)$ for $D$, 
and since $\cF_2\subseteq D'$. 
If $v_1\in A_{S}$ (and thus $k_1=\size{A_{AD}}=0$), then this follows since 
$(A,B)$ is a UDB in $D$, by Property $(P4.1)$ for $D$, 
and since $\cF_2\cup \cF_3\subseteq D'$. \\
To see that Property $(P3)$ holds in $D'$, observe first that 
$A_{AD}'\setminus \{v_{k_1+1}\}\subseteq A_{AD}$.  
Moreover, $(v_{k_1+1},z)\in D'$ for every $z\in V\setminus A$ since 
$(A,B)$ is a UDB in $D$ and since $\cF_3\subseteq D'$. 
For Property $(P4)$, it obviously holds that $\size{A_S'}<k_2\leq n/25$ 
and that $\size{B_S}\leq n/25$.  
Properties $(P4.1)$ and $(P4.3)$ follow trivially, Property $(P4.2)$ follows from 
Property $(P4.2)$ for $D$ and since $\cF_1\subseteq D'$. \\
For $(P5.1)$, observe that, since we made an index shift (from $A_S$ to $A_S'$), 
we have to prove that
$e_{D'}(v_{(k_1+1)+i},A_0)\leq n/25+1-i$ for every $1\leq i< k_2$.
First, let $1\leq i < k_2$ be such that 
$(k_1+1)+i\leq \ell$. Then only arcs from $D(v_{k_1+1+i},A_0)\cup \cF_1\cup\{e\}$ contribute 
to $D'(v_{k_1+1+i},A_0)$. 
Therefore, $e_{D'}(v_{k_1+1+i},A_0)\leq e_{D}(v_{k_1+1+i},A_0) + 1
\leq (n/25+1-(1+i))+1.$ 
Now let $k_1+1+i>\ell$. Then 
$D'(v_{k_1+1+i},A_0)=D(v_{k_1+1+i},A_0)$,  
and therefore, $e_{D'}(v_{k_1+1+i},A_0)\leq n/25+1-i$. \\
There is nothing to prove for Property $(P5.2)$. 
Finally, Property $(P6)$ follows since 
$\cF_1\cup \cF_2\cup \cF_3 \subseteq D'(A\setminus A_0, V\setminus B)$ and therefore, 
\begin{align*}
D' &= D\cup \cF_1\cup \cF_2\cup \cF_3 \\
	&= D(A,B) \cup D(A\setminus A_0, V\setminus B)\cup D(V\setminus A, B\setminus B_0)
	\cup \cF_1\cup \cF_2\cup \cF_3\\
	&=D'(A,B) \cup D'(A\setminus A_0, V\setminus B)\cup D'(V\setminus A, B\setminus B_0). 
\end{align*}

{\bf Case 2: $v\not\in A_{AD}\cup A_S$.}
By Property $(P2)$ and since $e=(v,w)\in \cA(D(V\setminus B))$, 
we may assume that $v\not\in A_D$, i.e. $v\in A_0\cup \rest$. 
We now want to incorporate $v$ into the tournament $A_{AD}\cup A_S$.   
Set 
\begin{align*}
\ell :=
\begin{cases}
\min \{i:\ (v_i,v)\notin D\} & \text {if minimum exists}\\
k_1+k_2+1 & \text{otherwise,}
\end{cases}
\end{align*}
and 
\begin{align*} 
v_i' :=
\begin{cases}
v_i 	& 1\leq i\leq \ell -1\\
v 	& i=\ell\\
v_{i-1}	& \ell +1\leq i\leq k_1+k_2+1.\\
\end{cases}
\end{align*}
Consider the following families of arcs. 
\begin{align*}
	\cF_1 &:= \big\{(v_i',w) : 1\leq i\leq \ell - 1\big\}\\
	\cF_2 &:= \big\{(v_{\ell}',v_i') : \ell + 1\leq i\leq k_1+k_2+1 \big\}\\
	\cF_3 &:= \big\{(v_{\ell}',z) : z\in\rest\cup A_0 \text{ and }(v_{\ell +1}',z)\in D  \big\}\\
	\cF_4 &:= \begin{cases}
	\big\{(v_1',z) : z\in A_0 \text{ and }(v_{\ell +1}',z)\notin D \big\} & \text{ if }v\in A_0\\
	\big\{(v_\ell',y) : y \in B\big\} & \text{ if }v\in \rest
	\end{cases}\\
	\cF_5 &:= \big\{(v_{k_1+1}',z) : z\in\rest,\ z\neq v \text{ and }(v_{\ell +1}',z)\notin D \big\},
\end{align*}
where we use the convention that if 
$\ell = k_1+k_2+1$ (and thus $v_{\ell+1}'$ does not exist) 
then we take $\cF_3 =\emptyset$,  
$\cF_5 := \big\{(v_{k_1+1}',z) : z\in\rest,\ z\neq v\big\}$,
and $\cF_4 := \big\{(v_1',z) : z \in A_0\big\}$ when $v\in A_0$. 
We illustrate the arcs in $\cF_1\cup\cdots\cup \cF_5$ in Figure \ref{fig20}.

 \begin{figure}[h]
 \centering
\begin{subfigure}{0.48\textwidth}
	\centering
	 \includegraphics[scale=0.7]
	 {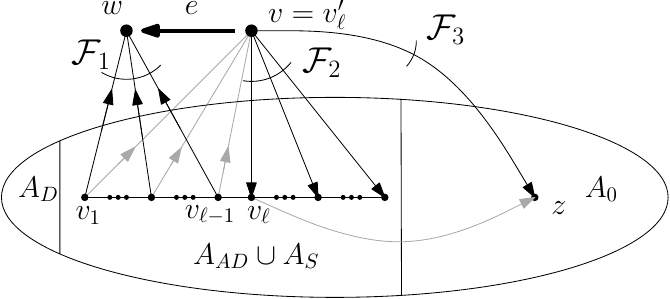}
\end{subfigure} 
\quad
 \begin{subfigure}{0.48\textwidth}
 	\centering
	\includegraphics[scale=0.5]
	{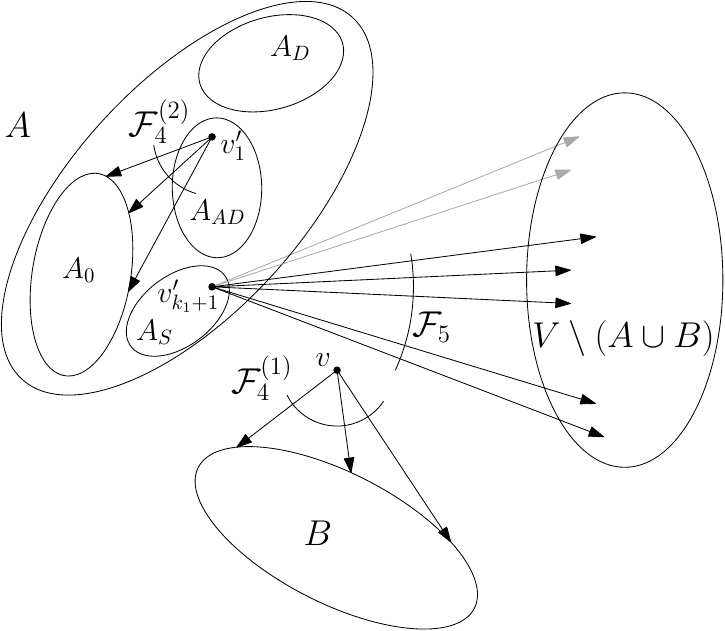}
 \end{subfigure}
 \caption{The case $v\in V\setminus (A\cup B)$. 
 Grey arcs are in $D$ already. 
 $\cF_1, \cF_2, \cF_3$ on the left 
 include $v$ into the tournament in $A$ 
 and restore $(P4.2)$. 
 On the right, 
 $\cF_5$ adds $v_{k_1+1}'$ to $A_{AD}$ while $(P3)$ is restored, and  
 $\cF_4^{(1)}$ adds $v$ to $A$. 
 (In case $v\in A_0$, $\cF_4^{(2)}$ adds $v_1'$ to $A_D$.)}
 \label{fig20}
 \end{figure}

We choose $\{f_1,\ldots,f_t\}$ to be $\{f_1,\ldots,f_t\} =\left( \cF_1\cup\cF_2\cup\cdots\cup \cF_5\right) \cap \cA(D).$
We proceed as before and show that $f_i\neq \overset{\leftarrow}{e}$ for all $1\leq i\leq t$, 
that $\cF_1\cup\cdots\cup\cF_5 \subseteq D\cup \cA(D)$, that $t\leq b$, 
and finally we deduce that $D'=D\cup \{e,f_1,\ldots,f_t\}$ is protected.

For the first part, $\eback = (w,v) \not\in \cF_1$ since $v\neq w$. 
Similarly, $\eback = (w,v) \not\in \cF_2 \cup \cF_3$ since $v_{\ell}'=v \neq w$. 
For the same reason, $\eback \not\in \cF_4$ in the case when $v\in \rest$. 
In the case when $v\in A_0$, assume that $(w,v)=(v_1',z)$ for some $z\in A_0$. 
Then $(v_1',v)=(w,v)\in \cA(D)$, by assumption on $e$, and $v_1'=v_1$. 
That is, $(v_1',v)\not\in D$, and $\ell =1$ by definition of $\ell$. 
But then $w=v_1'=v_{\ell}'=v$, by definition of $v_i'$, a contradiction. 
Also, $\eback \not\in \cF_5$ by definition of $\cF_5$.  
So, $f_i\neq \overset{\leftarrow}{e}$ for all $1\leq i\leq t$.

To see that $\cF_1\subseteq D\cup \cA(D)$, assume 
that $(w,v_i')=(w,v_i)\in D$ for some $1\leq i\leq \ell -1$.
Then $w=v_j\in A_{AD}\cup A_S$ for some $j< i <\ell$, by Properties $(P6)$,
$(P4.1)$ and since $w\notin A_D$.
By definition of $\ell$ we conclude $(w,v)=(v_j,v)\in D$,
a contradiction to $e\in \cA(D(V\setminus B)).$ 
Thus, $\cF_1 \subseteq D\cup \cA(D)$. 
Now, $\cF_2 \subseteq D\cup \cA(D)$  
by Property $(P4.2)$ and the definition of $\ell$. 
Also, $\cF_3 \cup \cF_4\cup \cF_5 \subseteq D\cup \cA(D)$ 
since every arc of the form $(w',v_i')$ in $D$ satisfies
$w'\in A\setminus A_0$, by Property $(P6)$ and since $v_i'\in A\cup \rest$.

To bound the number $t$ by the bias $b$ 
note that in case $v\in A_0$,
\begin{align*}
t & \leq |\cF_1\cup \cF_2| + |\cF_3\cup \cF_4 \cup \cF_5| \\ 
	& \leq k_1+k_2 + |A_0\cup \rest | \\
	& = |A_{AD}\cup A_S| + |A_0\cup \rest |
	\leq |V\setminus B|
	< \frac{9n}{10}<b,
\end{align*}
since by Property $(P1)$ we have $|B|\geq n/10+1.$
When $v\in\rest$, 
then we bound 
\begin{align*}
t & \leq |\cF_1\cup \cF_2| + |\cF_3 \cup \cF_5|  + |\cF_4|\\
	& \leq |A_{AD}\cup A_S| + (\size{\rest} + e_D(v_{\ell+1}', A_0)) + |B| \\
	& = \size{V\setminus(A_0\cup A_D)} +e_D(v_{\ell+1}', A_0).
\end{align*}
Since $v\in \rest$, we have that  $\ell\geq k_1+1$, 
by Property $(P3)$ and definition of $\ell$. 
Since $v_{\ell+1}'=v_\ell$, it follows that 
$$  t \leq \size{V\setminus(A_0\cup A_D)} +e_D(v_{\ell+1}', A_0)
	\leq  \frac{9n}{10}+\frac{n}{25} < b,$$
by Properties $(P1)$, $(P5.1)$ and choice of $b$.

Finally, we show that $D'=D\cup \{e,f_1,\ldots, f_t\}$ is a protected digraph. 
Set
\begin{align*}
A_D':=
\begin{cases}
A_D\cup \{v_1'\} & \text{ if } v\in A_0,\\
A_D	& \text{ if } v\in\rest
\end{cases}
\qquad
A_{AD}':=
\begin{cases}
\{v_2',\ldots , v_{k_1+1}' \} & \text{ if } v\in A_0\\
\{v_1',\ldots , v_{k_1+1}' \} & \text{ if } v\in \rest
\end{cases}
\end{align*}
\begin{align*}
A_S':=\{v_{k_1+2}',\ldots,v_{k_1+k_2+1}'\}
\qquad
A_0':= A_0\setminus \{v\}
\qquad
A':=A\cup \{v\}.
\end{align*}

Moreover, let $B'=B$ with the same partition as for $B$.
Then $(A',B')$ is a UDB in $D'$, since $(A,B)$ is a UDB in $D\subseteq D'$
and since in case $v\in\rest$ we have $\cF_4\subseteq D'$.
For Property $(P1)$, $|B_D\cup B_0|\geq n/10$ and $|A'|,|B'|\geq n/10+1$ obviously hold.
Now, observe that $|A_D'|=|A_D|+1$ and $|A_0'|= |A_0|-1$
in case $v\in A_0$, while $A_D=A_D'$ and $A_0=A_0'$ in case $v\in\rest$.
Thus $|A_D'\cup A_0'|= |A_D\cup A_0|\geq n/10$. 

For Property $(P2)$, there is nothing to prove when $v\in\rest$,
since then the sets of dead vertices do not change. So, let $v\in A_0$.
Again $B_D$ does not change.
$D'(A_D')$ is a transitive tournament, since $D(A_D)\subseteq D'(A_D')$
is such, and since $(z,v_1')\in D\subseteq D'$ for every $z\in A_D$ by the
UDB-property for $A_D$.
To see that $(A_D',V\setminus A_D')$ forms a UDB we need to observe that
$(v_1',z)\in D'$ for every $z\in V\setminus A_D'$.

Assume first that $v_1'=v_1\in A_{AD}\cup A_{S}$. 
Then $(v_1',z)\in D \subseteq D'$ for every $z\in B$ since $(A,B)$ is a UDB; 
and for every $z\in (A_{AD}\cup A_S)\setminus \{v_1'\}$ by Property $(P4.1)$. 
For every $z\in A_0$, 
if $(v_{\ell +1}',z)\notin D$ 
(or $\ell = k_1+k_2+1$ where $v_{\ell +1}'$ does not exist), 
then $(v_1',z)\in \cF_4 \subseteq D'$; 
and if $(v_{\ell +1}',z)\in D$ 
then $(v_1',z)\in D\subseteq D'$ by Property $(P4.2)$. 
For $z\in \rest$, 
if $v_1'\in A_{AD}$ then $(v_1',z)\in D \subseteq D'$ by Property $(P3)$ for $D$; 
if $v_1'\in A_{S}$ (and thus $k_1=|A_{AD}|=0$) and if $(v_{\ell +1}',z)\notin D$ 
(or $\ell = k_1+k_2+1$ where $v_{\ell +1}'$ does not exist),  
then $(v_1',z)\in \cF_5 \subseteq D'$; 
and if $v_1'\in A_{S}$ and $(v_{\ell +1}',z)\in D$,  
then $(v_1',z)\in D\subseteq D'$ by Property $(P4.2)$. 

Now, assume that $v_1'=v \in A_0$ and thus $\ell =1$. 
If $v_2'=v_{\ell + 1}'\in A_{AD}$, 
then $(v_2',z)\in D$ for every $z\in\rest$ by Property $(P3)$. 
Therefore, for every $z\in (A_0\cup \rest)\setminus\{v\}$ we have that 
$(v_1',z)\in \cF_3\cup \cF_4 \subseteq D'$. 
For every $z\in A_{AD}\cup A_S$, we have that $(v_1',z)\in \cF_2\subseteq D'$. 
Furthermore,  $(v_1',z)\in D\subseteq D'$ for every $z\in B$ since $v_1'\in A_0$ and $(A,B)$ 
is a UDB in $D$. 
If $v_2'\in A_{S}$ (or $v_2'$ does not exist) and therefore $k_1=0$, 
then similarly $(v_1',z)\in \cF_2\cup \cF_3\cup \cF_4 \cup \cF_5 \cup D \subseteq D'$ 
for all $z\in V\setminus A_D'$.  

For Property $(P3)$, observe that the statement for $B_{AD}$ does not change. 
To see that $(A_{AD}',V\setminus A')$ forms a UDB 
we need to observe that $(v_{k_1+1}',z)\in D'$ for every $z\in V\setminus A'$. 
If $z\in B'$, then this is clear,
since $(A',B')$ forms a UDB as we showed already above. 
Let now $z\in \restStrich=V\setminus(A\cup B\cup\{v\})$. 
If $\ell\leq k_1$, then $v_{k_1+1}'=v_{k_1}\in A_{AD}$. Therefore, $(v_{k_1+1}',z)\in D\subseteq D'$ 
by Property $(P3)$ for $D$. 
If $\ell\geq k_1+1$, 
then $(v_{k_1+1}',z)\in \cF_3\cup\cF_5\cup D \subseteq D'$ 
(where we use Property $(P4.2)$ which says that if $(v_{\ell+1}',z)=(v_{\ell},z)\in D$ then 
$(v_{k_1+1},z)\in D$).

For Property $(P4)$, note that $\size{A_S'}=\size{A_S}=k_2 \leq n/25$.
For $(P4.1)$ observe that the statement for $\{w_1,\ldots, w_{\ell_1+\ell_2}\}$ does not change. 
To see that $(v_2',\ldots,v_{k_1+k_2+1})$ or $(v_1',\ldots,v_{k_1+k_2+1})$
induces a transitive tournament, 
note first that the vertex set without $v_{\ell}'$ induces a transitive tournament in $D\subseteq D'$.
We have $(v_i',v_{\ell}')=(v_i,v)\in D\subseteq D'$ for every $i\leq \ell -1$, by 
definition of $\ell$. Moreover, $(v_{\ell}',v_i')\in D'$ for every $i\geq \ell +1$, 
since $\cF_2\subseteq D'$.

For $(P4.2)$, let $z\in A_0'\cup\restStrich$. 
We show that $\{i: (v_i',z)\in D\}$ is a down set of $[k_1+k_2+1]$. 
Note that this then implies $(P4.2)$ also when $v\in A_0$ and thus 
$A_{AD}'\cup A_S' =\{v_2',\ldots,v_{k_1+k_2+1}\}$. 
Since $z\in A_0'\cup\restStrich$,  
only arcs from $D\cup \{e\}\cup \cF_1 \cup \cF_3 \cup \cF_4\cup \cF_5$ 
contribute to the set $\{i: (v_i',z)\in D'\}$. 
Note that $\{i: (v_i,z)\in D\}$ is a down set of $[k_1+k_2]$ 
and the relative order of the $v_i'$ for $i\neq \ell$ does not change. 
So, if $z\neq w$, then the arcs from $\cF_3$ reestablish the down-set property for $D'$.  
If $z=w$, then the arcs from $\cF_1$ 
reestablish the down-set property for $D'$. 
Now, $\cF_4$ contributes at most the element $\{1\}$ to $\{i: (v_i',z)\in D'\}$ which is 
of no harm. 
The family $\cF_5$ may contribute the element $\{k_1+1\}$ to $\{i: (v_i',z)\in D'\}$ 
for some $z\in \rest$. However, by Property $(P3)$, $(v_i,z)\in D$ 
for all $1\leq i \leq k_1$, so $\cF_5$ certainly does not destroy the down-set property.

There is nothing to prove for Property $(P4.3)$, 
since $B$ and $\{w_1,\ldots, w_{\ell_1+\ell_2}\}$ are unchanged. 

For $(P5.1)$, observe that, since we make an index shift (from $A_S$ to $A_S'$), 
we have to prove that
$e_{D'}(v_{(k_1+1)+i}',A_0')\leq n/25+1-i$ for every $1\leq i\leq k_2$.
First, let $1\leq i \leq k_2$ be such that 
$(k_1+1)+i<\ell$. Then only arcs from $D(v_{k_1+1+i},A_0)\cup \cF_1$ contribute 
to $D'(v_{k_1+1+i}',A_0')$. 
Therefore, $e_{D'}(v_{k_1+1+i}',A_0')\leq e_{D}(v_{k_1+1+i},A_0) + 1
\leq (n/25+1-(1+i))+1.$\\
Now, let $(k_1+1)+i=\ell$. 
Then $e_D(v_{\ell}',A_0)=e_D(v,A_0)=0$ since $v\in A_0 \cup \rest$ and by Property $(P6)$ for $D$. 
So, only $\cF_3\cup \{e\}$ contributes to $D'(v_{\ell}',A_0')$. 
Therefore, $e_{D'}(v_{\ell}',A_0') \leq e_D(v_{\ell+1}',A_0) + 1 = e_D(v_{\ell},A_0)+1
= e_D(v_{k_1+1+i},A_0)+1 \leq n/25+1-i.$ 
Finally, let $k_1+1+i>\ell$. Then $v_{k_1+1+i}'=v_{k_1+i}$ and
only arcs from $D(v_{k_1+i},A_0)$ contribute 
to $D'(v_{k_1+1+i}',A_0')$. This again proves $e_{D'}(v_{k_1+1+i}',A_0')\leq n/25+1-i$.

There is nothing to prove for Property $(P5.2)$. 

For $(P6)$ note that it is enough to prove that
$$D'\subseteq D'(A',B')\cup D'(A'\setminus A_0',V)\cup D'(V,B'\setminus B_0').$$
This indeed holds, since
\begin{align*}
D(A,B) & \subseteq D'(A',B'),\\
D(A\setminus A_0,V\setminus B)\cup \cF_1\cup \cdots \cup \cF_5 & \subseteq D'(A'\setminus A_0',V),\\
D(V\setminus A,B\setminus B_0) & \subseteq D'(V,B'\setminus B_0').\qedhere
\end{align*}
\end{proof}

\begin{proof}[Proof of Lemma \ref{lem:AddEdgesII}]
Assume first that $A_{AD}\neq \emptyset$. 
If $(v_1,y)\in D$ for all $y\in A_0$, 
where $v_1$ is the source of the tournament on $A_{AD}$ as before, 
then we may set $A_D:=A_D\cup\{v_1\}$ and $A_{AD}:= A_{AD}\setminus \{v_1\}$ 
and reapply the lemma. 
Otherwise, there exists a vertex $y\in A_0$ 
such that the pair $\{v_1,y\}$ is not directed. 
Then the arc $f=(v_1,y)$ obviously satisfies that 
$D+f$ is protected. \\
So assume from now on that $A_{AD}=\emptyset$. 
If $A_S \neq \emptyset$ 
let $v_1$ be the source of the transitive tournament $D(A_S)$. 
We similarly may assume 
that there exists a vertex $y\in \rest$ such that 
the pair $\{v_1,y\}$ is not directed, for otherwise we reapply
the lemma with $A_{AD}:=\{v_1\}$ and $A_S := A_S\setminus \{v_1\}$. 
Then the arc $f=(v_1,y)$ obviously satisfies that 
$D+f$ is protected.\\
So assume from now on that $A_{AD}=A_S=\emptyset$. 
If $A_0 \neq \emptyset$ and $\rest \neq \emptyset$, 
then let $f=(x,y)$ for some $x\in A_0$ and some $y\in \rest$.
Now set $A_S:=\{x\}$ and $A_0 := A_0\setminus \{x\}$. 
It is easy to see that all the Properties $(P2)$--$(P6)$ 
hold for $D+f$ after the update. 
For $(P1)$, note that by assumption, $|A_D\cup A_0|+1=|A|\geq n/10+1$
after the update. 
Hence, $|A_D\cup A_0|\geq n/10$.
If $|A_0| \geq 2$ and $\rest = \emptyset$, 
then let $f=(x,y)$ for some 
distinct vertices $x,y\in A_0$, and set $A_{AD}:=\{x\}$ 
and $A_0:=A_0\setminus \{x\}$. The properties follow as in the 
previous case. 
If $A_0=\{x\}$ is a singleton and $\rest = \emptyset$, 
set $A_D:=A_D \cup A_0$ and $A_0=\emptyset$ and reapply the lemma. \\
So we may assume that $A_{AD}=A_S=A_0=\emptyset$, that is, 
$A=A_D$ and thus $D(A)$ is a transitive tournament and $(A,V\setminus A)$ 
is a UDB. \\
Now, by a similar analysis, we can either pick a suitable arc $f$ in $V\setminus A$
or deduce that $B=B_D$, that is, $D(B)$ forms a transitive tournament 
and $(V\setminus B,B)$ is a UDB. 
By assumption, $D$ is not a transitive tournament on $K_n$, 
hence there must be an undirected pair $\{x,y\}$. 
Since $A=A_D$ and $B=B_D$, both $x,y$ must lie in $\rest$. 
We claim that $f=(x,y)$ is suitable: Set $A_S:=\{x\}$
and update $A:=A\cup \{x\}$. Note that 
since $(V\setminus B,B)$ is a UDB, all the arcs $(x,z)$ for $z\in B$ 
are elements of $D$. 
It is easy to see that the Properties $(P1)$--$(P6)$ hold for $D+f$ after the update. 
\end{proof}

\section{Concluding remarks and open problems} 

In Theorem \ref{OrientedCycleMain}, we provide a winning strategy for OBreaker in the 
monotone Oriented-cycle game when $b\geq  5n/6+2$ and thus 
prove that $t(n,\cC)\leq 5n/6+1$. 
On the other hand \cite{bks2012}, $n/2-2$ is the best known lower bound on $t(n,\cC)$. 
In the proof of this result, OMaker first builds a long directed path (of length $n-1$) in at most $n-1$ rounds. 
When $b\leq n/2-2$, OBreaker cannot have directed all 
``backward'' edges of this path, so OMaker can direct one of those 
and close a cycle. 
From the perspective of OBreaker, it is indeed most harmful if OMaker builds a long path. 
As we have seen in the motivation of $\alpha$-structures, OMaker can create $\ell - 1$ 
immediate threats in the $\ell^{th}$ round  with such a strategy. 
On the other hand, OBreaker can ensure that he needs to direct 
at most $\ell-1$ edges to answer every immediate threat, even if OMaker
plays another strategy than building a long path (cf.~Lemma~\ref{alphaProc}). 
Despite these seemingly matching strategies, a significant gap between lower and upper bound 
remains. 
Let us describe briefly why $b\geq n/\sqrt{2}-o(n)$ is necessary for our strategy to be a winning 
strategy for OBreaker. 

Recall that our strategy for OBreaker included to build a UDB $(A,B)$ such 
that both parts have size at least $n-b$. 
Suppose OBreaker succeeds to build a UDB $(A,B)$ of size $n-b-1$ in some round $r$ (in Stage I). 
Observe that then $r\geq (n-b-1)^2/(b+1)$. 
Suppose that OMaker only directs edges in $V\setminus B$ in the first $r$ rounds, so that  $k$, 
the size  of the $\alpha$-structure in $V\setminus B$, increases in each round; whereas $\ell$, 
the size  of the $\alpha$-structure in $V\setminus A$, is zero. 
Assume further that OBreaker only increases one of the values
$|A|-k=|A|-r$ and $|B|-\ell=|B|$ (in order to decrease the number of edges he has to direct in the next round). 
Without loss of generality, let this be $|A|-k$. 
In order to follow procedure $\alpha$
and to increase $|A|-k$ (by adding two vertices to $A$), OBreaker needs to direct
at least $k+2|B|\geq (n-b-1)^2/(b+1) + 2(n-b-1)$ edges in round $r+1$. 
But this is only possible if $b\geq n/\sqrt{2}-o(n).$
We conjecture that the correct threshold is asymptotically at least
$n/\sqrt{2}.$

\begin{conjecture}
For $b\leq n/\sqrt{2}-o(n)$ 
OMaker has a strategy to close a directed cycle in the monotone b-biased orientation game.
\end{conjecture}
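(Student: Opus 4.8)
The plan is to prove the conjecture by exhibiting, for every $b\le \nicefrac{n}{\sqrt2}-o(n)$, an OMaker strategy that eventually creates an undirected \emph{immediate threat}, i.e.\ an undirected pair $\{x,y\}$ together with a directed $y$--$x$ path already present in $D$, whereupon OMaker closes a cycle. The guiding heuristic is the calculation in the concluding remarks, read from OMaker's side: OBreaker can survive a round only if the set of pairs $\{x,y\}$ whose orientation OBreaker is currently \emph{forced} to control — because a directed $y$--$x$ path already exists — grows by at most $b$ in that round. The constant $\nicefrac1{\sqrt2}$ is exactly the point at which OMaker can make this forced set grow just slightly faster than $b$ per round, essentially reproducing the inequality $n^2\le 2b^2$.

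Concretely, I would have OMaker maintain a growing ``source set'' $S$ and ``sink set'' $T$ (disjoint, and disjoint from a short directed $s^\ast$--$t^\ast$ path $P$) together with all arcs from $S$ into $s^\ast$ and all arcs from $t^\ast$ into $T$; in each round OMaker either extends this structure by one vertex or, if OBreaker has started to ``neutralise'' a vertex of $S$ or $T$, switches her attention to a fresh vertex. For every $u\in S$ and $z\in T$ there is then a directed $u$--$z$ walk through $P$, so OBreaker must already have directed $(u,z)$; hence OBreaker is forced to contain the complete bipartite threat graph between $S$ and $T$, as well as the arcs needed to keep new members of $S$ and $T$ from acquiring harmful out- and in-arcs (the analogue of the cost $2|B|$ in the remark). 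Balancing OMaker's one-edge-per-round growth of $|S|,|T|$ against OBreaker's total budget — with $|S|,|T|$ reaching size $\Theta(n-b)$ and the number of rounds reaching $\Theta\big((n-b)^2/b\big)$, exactly as in the remark — yields the threshold $b=\nicefrac{n}{\sqrt2}$, below which OBreaker runs out of moves and an immediate threat survives.

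The hard part, and the reason the statement remains a conjecture, is that this accounting only certifies that OBreaker's $UDB$-based strategy is tight; it does not, by itself, defeat an arbitrary OBreaker response. In particular OBreaker may try to ``kill'' prospective sources or sinks cheaply — turning a vertex into a near-universal source, or into a dead vertex, with far fewer than $n$ arcs — thereby preventing the threat graph between $S$ and $T$ from ever reaching linear size. A complete proof would therefore need an OMaker invariant, plausibly a potential function counting the number of ordered pairs $(u,z)$ that are still ``alive'' (not yet separated by a transitive obstruction), weighted by the path structure currently available to OMaker, together with an argument that OMaker always has a move raising this potential by more than OBreaker's $b$ arcs can erase. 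Pinning down how cheaply OBreaker can destroy a batch of live pairs, and showing that OMaker can always replenish them, is exactly the gap between the known lower bound $\nicefrac{n}{2}-2$ and the conjectured $\nicefrac{n}{\sqrt2}$, and is where essentially all of the difficulty resides.
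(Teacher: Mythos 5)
You should note first that the statement you were asked to prove is labeled a \emph{conjecture} in the paper, and the paper itself supplies no proof of it. What the paper does provide, in the concluding remarks, is a heuristic calculation showing that $b \geq \nicefrac{n}{\sqrt 2} - o(n)$ is \emph{necessary for the authors' own OBreaker strategy} (the $UDB$/$\alpha$-structure strategy of Section~\ref{monotone}) to go through. That is a statement about the tightness of one specific OBreaker strategy, not a lower bound on the threshold $t(n,\cC)$: the best proven lower bound in the literature remains $\nicefrac{n}{2} - 2$ from \cite{bks2012}.

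Your proposal is honest and, to its credit, tracks the paper's own reasoning closely. The ``source set / sink set / short path'' picture is a reasonable dual reading of the $UDB$ heuristic, and the accounting $r \geq (n-b-1)^2/(b+1)$ and $k + 2|B| \leq b$, together with $|A|,|B| = \Theta(n-b)$, does reproduce the inequality $n^2 \lesssim 2b^2$ that pins down the constant $\nicefrac{1}{\sqrt 2}$. However, as you yourself observe in the final paragraph, this only certifies that a particular OBreaker bookkeeping fails for smaller $b$; it says nothing about an \emph{arbitrary} OBreaker response. The step that fails is exactly the one you flag: OBreaker is under no obligation to maintain a $UDB$ and the associated $\alpha$-structures, and the cost model ``$k + 2|B|$ edges per round to absorb one new OMaker edge'' is an upper bound on one strategy's expenditure, not a lower bound on all strategies' expenditure. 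Without a potential function (or pairing argument) that is monotone under \emph{every} OBreaker move, the calculation does not yield a winning strategy for OMaker. So your ``proof'' is, correctly self-diagnosed, not a proof: it is a faithful restatement of the paper's heuristic and an accurate identification of what is missing, which is precisely the open problem. There is no discrepancy with the paper, because the paper does not claim to prove the conjecture either.
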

It is of course desirable to determine the threshold bias for the monotone Oriented-cycle game $t(n,\cC)$ exactly. 
Concerning the strict rules, 
we wonder whether $t^+(n,\cC)$ and $t^-(n,\cC)$ are (asymptotically) equal. 

Here are two natural variants of the Oriented-cycle game. 

\medskip
\noindent
{\bf Playing on random graphs.\\}
Suppose we replace the edge set of the complete graph $K_n$ by the edges of a random graph $G\sim \cG_{n,p}$, for 
some $p=p(n)$. That is, OMaker and OBreaker only direct edges of $G$.  
OMaker wins if the final digraph (with underlying edge set of $G$) contains a directed cycle; otherwise, OBreaker 
wins. During the Berlin-Pozna\'n Seminar 2013, held in Gu\l{}towy, Poland, Tomasz {\L}uczak asked how the threshold bias behaves in this variant of the game. 

\begin{problem}
Given $0<p=p(n)<1$. 
What is the largest bias $b=b(n,p)$ such that 
OMaker asymptotically almost surely  has a strategy 
to create an oriented cycle in the $b$-biased orientation game played on the edge set of $G\sim \cG_{n,p}$, 
under the strict rules and under the monotone rules, respectively?
\end{problem}

Note that for any graph $G$ with maximum degree $\Delta$, 
a modified version of the trivial strategy shows that 
OBreaker can win the Oriented-cycle game played on the edge set of $G$ when $b\geq \Delta -1$. 
Therefore, $b(n,p)\leq (1+o(1)) np$, provided $p$ is large enough. 
As we show in this paper, this is not tight for $p=1$. 
Indeed, we believe that for smaller values of $p$ the trivial upper bound of $np$ is not tight as well.  
We want to remark that in general our strategy does not directly yield an improvement on this upper bound of $b(n,p)$. 

\medskip
{\bf Preventing cycles of fixed length.\\}
In a different direction, let $\cC_k$ be the property of a tournament to contain an induced copy of a (directed) 
cycle of length $k$. Note that the properties $\cC_3$ and $\cC$ are equivalent, 
since if a tournament contains a cycle (of some length $k$), then it contains a cycle of length three.  
Therefore, preventing a cycle of length $k$ when playing on $K_n$ is at least as easy for OBreaker as preventing a 
cycle of length three. Or in other words, $t(n,\cC_k)\leq t(n,\cC)$ for all $3\leq k\leq n$. 
Note that the property $\cC_k$ does not necessarily imply the property $\cC_{k-1}$, so the 
sequence $t(n,\cC_3)$, $t(n,\cC_4), \ldots $ is not necessarily monotone. 
The threshold bias for the Hamilton cycle game is $t(n,\cC_n)=(1+o(1))n/\ln n$, 
as was recently proved in \cite{bks2012}. 
We wonder how $t(n,\cC_k)$ behaves as a function of $n$ and $k$. In particular, it would be interesting to 
know for which values of $k=k(n)$ besides $n$ the function $t(n,\cC_k)$ is sublinear.

\bigskip
{\bf Acknowledgement.}
We would like to thank Tibor Szab\'o for invaluable comments on the manuscript and for simplifying the definition of $\alpha$-structures. 
Furthermore, we would like to thank Jean Cardinal,  Katarzyna Mieczkowska, Yury Person, Shakhar Smorodinsky and Bettina Speckmann for fruitful discussions. 
Finally, we would like to thank the anonymous referees for helpful comments. 
 
\bibliographystyle{abbrv}
\bibliography{OCycleRefs}

\begin{thebibliography}{10}

\bibitem{Beck82}
J.~Beck.
\newblock Remarks on positional games. {I}.
\newblock {\em Acta Mathematica Hungarica}, 40(1):65--71, 1982.

\bibitem{Beck85}
J.~Beck.
\newblock Random graphs and positional games on the complete graph.
\newblock {\em North-Holland Mathematics Studies}, 118:7--13, 1985.

\bibitem{Beck94}
J.~Beck.
\newblock Deterministic graphs games and a probabilistic intuition.
\newblock {\em Combinatorics, Probability \& Computing}, 3:13--26, 1994.

\bibitem{BeckBook}
J.~Beck.
\newblock {\em Combinatorial games: {Tic-Tac-Toe theory}}, volume 114.
\newblock Cambridge Univ Pr, 2008.

\bibitem{bp2005}
M.~Bednarska and O.~Pikhurko.
\newblock Biased positional games on matroids.
\newblock {\em European Journal of Combinatorics}, 26(2):271--285, 2005.

\bibitem{bks2012}
I.~Ben-Eliezer, M.~Krivelevich, and B.~Sudakov.
\newblock Biased orientation games.
\newblock {\em Discrete Mathematics}, 312(10):1732--1742, 2012.

\bibitem{bs1998}
B.~Bollob{\'a}s and T.~Szab{\'o}.
\newblock The oriented cycle game.
\newblock {\em Discrete mathematics}, 186(1):55--67, 1998.

\bibitem{ec1978}
V.~Chv{\'a}tal and P.~Erd\H{o}s.
\newblock Biased positional games.
\newblock {\em Annals of Discrete Mathematics}, 2:221--229, 1978.

\bibitem{cgl2013}
D.~Clemens, H.~Gebauer, and A.~Liebenau.
\newblock The {Random Graph Intuition for the Tournament Game}.
\newblock {\em Combinatorics, Probability and Computing}, FirstView:1--13, 12
  2015.

\bibitem{ES1973}
P.~Erd\H{o}s and J.~L. Selfridge.
\newblock On a combinatorial game.
\newblock {\em Journal of Combinatorial Theory, Series A}, 14(3):298--301,
  1973.

\bibitem{gs2009}
H.~Gebauer and T.~Szab{\'o}.
\newblock Asymptotic random graph intuition for the biased connectivity game.
\newblock {\em Random Structures \& Algorithms}, 35(4):431--443, 2009.

\bibitem{hkss2010}
D.~Hefetz, M.~Krivelevich, M.~Stojakovic, and T.~Szab\'o.
\newblock Avoider-{E}nforcer: the rules of the game.
\newblock {\em Journal of Combinatorial Theory, Series A}, 117(2):152--163,
  2010.

\bibitem{hks2007}
D.~Hefetz, M.~Krivelevich, and T.~Szab{\'o}.
\newblock Avoider--{E}nforcer games.
\newblock {\em Journal of Combinatorial Theory, Series A}, 114(5):840--853,
  2007.

\bibitem{k2011}
M.~Krivelevich.
\newblock The critical bias for the {H}amiltonicity game is $(1+o(1))n/\ln n$.
\newblock {\em Journal of the American Mathematical Society}, 24(1):125--131,
  2011.

\end{thebibliography}

\end{document}